\newtheorem{theorem}{Theorem}
\newtheorem{proposition}{Proposition}
\newtheorem{definition}{Definition}
\newtheorem{lemma}{Lemma}
\theoremstyle{remark}
\newtheorem*{remark}{Remark}
\theoremstyle{remark}
\newtheorem*{note}{Note}
\title{Higher Equipments, Double Colimits and Homotopy Colimits}
\date{2019\\ August}
\author{Redi Haderi\\ Mathematics Department, Bilkent University
	\\  redi.haderi@bilkent.edu.tr}
\begin{document}
	
	\maketitle

	\begin{abstract}
		This document is centered around a main idea: simplicial categories, by which we mean simplicial objects in the category of categories, can be treated as a two-fold categorical structure and their double category theory is homotopically meaningful. The most well-known two-fold structures are double categories, typically used to organize bimodules in various contexts. However there is no double category of spaces even though notions of bimodule are conceivable. We first remedy this defect of double category theory by constructing a meaningful simplicial category of spaces. Then we develop the analogy with double categories by defining double colimits and by postulating an equipment property, which is promptly satisfied in the examples. As an application we prove that certain double colimits are naturally interpreted as homotopy colimits. Quite surprisingly this analogy unveils a principle: simplicial categories are to simplicially enriched categories what double categories are to 2-categories! 
	\end{abstract}
\pagebreak

\tableofcontents

\section*{Acknowledgements}

I would like to thank all the members of the Mathematics Department at Bilkent University. In paricular Laurence Barker, Ergün Yalçın  and Matthew Gelvin for their precious advice, my friend Melih Üçer for our good conversations, and my adviser, Özgün Ünlü , for encouraging me to develop these ideas and complete this work.

Immense gratitude goes to my family for always believing in me and my wife for being a wonderful companion and office mate. This work is dedicated to them. Especially my mother, my first mathematics teacher.

\pagebreak

\section{Introduction and summary}

\epigraph{Atiyah described mathematics as the “science of analogy.” In this vein, the purview
	of category theory is \textit{mathematical analogy}.}{Emily Riehl \\ "Category Theory in Context" \\ \cite{riehl2017category}}

\subsection*{Double categories meet sSet-categories}

There are many angles from which to introduce and motivate the content of this paper. We begin by describing an observation which triggered us to develop the formalism that follows in the next sections. 

We have the following two realms of mathematical structures:
\begin{itemize}
	\renewcommand\labelitemi{--}
	\item That of double categories and their category theory, usually associated with bimodule-like structures. In this world we find terms such as proarrow, equipment, double limits and colimits etc.
	\item That of categories enriched over simplicial sets, \textbf{sSet}-categories for short, usually associated with homotopical settings and higher category theory. In this world we find terms such as homotopy limit and colimit.
\end{itemize} 

Given a diagram of categories 
$$F : \mathcal{J} \rightarrow \textbf{Cat}$$
we may construct a category $\textbf{Gro}(F)$, called the \textit{Grothendieck construction} of $F$. If we denote $\mathcal{C}_i = F(i)$ for $i \in \mathcal{J}$ and $F_\alpha: \mathcal{C}_i \rightarrow \mathcal{C}_j$ the image of a morphism $\alpha: i \rightarrow j$ in $\mathcal{J}$, then $\textbf{Gro}(F)$ has
\begin{itemize}
	\item[($\bullet$)] objects the objects of $\mathcal{C}_i$ for $i \in \mathcal{J}$
	\item[($\rightarrow$)] morphisms between two objects $x \in \mathcal{C}_i$ and $y \in \mathcal{C}_j$ pairs $(\alpha, f)$ where $\alpha: i \rightarrow j$ is a morphism in $\mathcal{J}$ and $f : F_\alpha x \rightarrow y$ is a morphism in $\mathcal{C}(j)$
	\item[($\circ$)]  composition given by the formula
	$$(\beta, g)(\alpha, f) = (\beta\alpha, g\circ F_\beta(f))$$
	whenever it makes sense
\end{itemize}

The Grothendieck construction serves as a meeting point for double categories and \textbf{sSet}-categories because it can be formally understood both as 
\begin{itemize}
	\item[i)] a homotopy colimit
	\item[ii)] a double colimit
\end{itemize}

Roughly speaking, a simplicially enriched category is a category $\mathcal{C}$ in which the morhisms in $\mathcal{C}(x,y)$, for two objects $x,y$, are the vertices of a simplicial set. For example the category of categories \textbf{Cat} can be seen as \textbf{sSet}-enriched, with the mapping space between two categories $\mathcal{C}$ and $\mathcal{D}$ having $n$-simplicies the collection of functors
$$\mathcal{C} \times \Delta^n \rightarrow \mathcal{D}$$
where $\Delta^n$ is the linear category with objects $\{0, 1, \dots ,n  \}$ and a morphism $i \rightarrow j$ if $i<j$.

$\mathcal{C}$ is said to be \textit{tensored} over simplicial sets if it makes sense to "multiply" an object $x$ with a simplicial set $K$. More precisely if there is an object $K \odot x$ and a natural isomorphism 
$$\mathcal{C}(K \odot x, y) \cong \textbf{sSet}(K, \mathcal{C}(x,y))$$

In a tensored \textbf{sSet}-category $\mathcal{C}$ we may define the (local) \textit{homotopy colimit} of a diagram 
$$F: \mathcal{J} \rightarrow \mathcal{C}$$
via the coend formula
$$\text{hocolim}F = \int^{i \in \mathcal{J}} i/\mathcal{J} \odot F(i)$$
We consider coends confusing and this formula ad hoc, but it works. 
The Grothendieck construction is a model for the homotopy colimit of a diagram in \textbf{Cat}.

On the other hand the category \textbf{Cat} is part of a double category. In general, a double category $\mathbb{D}$ consists of
\begin{itemize}
	\item[($\bullet$)] objects
	\item[($\rightarrow$)] two types of morphisms between the objects, usually called vertical and horizontal, which compose within their type but not with each other
	\item[($\square$)] square-shaped cells
	\begin{center}
		\begin{tikzpicture}
		
		\node (v1) at (-2.8,3.2) {$A$};
		\node (v3) at (-1.2,3.2) {$B$};
		\node (v2) at (-2.8,1.8) {$C$};
		\node (v4) at (-1.2,1.8) {$D$};
		\draw[->]  (v1) edge node[left, font= \scriptsize]{$f$} (v2);
		\draw[->]  (v1) edge node[above, font= \scriptsize]{$M$} (v3);
		\draw[->]  (v3) edge node[right, font= \scriptsize]{$g$} (v4);
		\draw[->]  (v2) edge node[below, font= \scriptsize]{$N$} (v4);
		
		\node at (-2,2.5) {$\alpha$};
		\end{tikzpicture}
	\end{center}
	whose boundary is comprised of vertical and horizontal morphisms
	\item[($\circ$)] vertical and horizontal composition of cells (when they are adjecent to each other in the obvious manner) satisfying certain laws
\end{itemize}

 It is typical to use such cells in settings where $A,B,C,D$ are some sort of algebraic objects (monoids, groups, rings etc), $f,g$ homomorphisms, $M,N$ are bimodules and $\alpha : M \rightarrow N$ is a morphism which "respects" $f$ and $g$. We can define such double categories because bimodules can be tensored and we can consider tensoring as a composition operation. 

In particular, when regarding categories themselves as algebraic objects, for two categories $\mathcal{C}$, $\mathcal{D}$ we define a $(\mathcal{C}, \mathcal{D})$-bimodule, also called profunctor, to be a functor 
$$u : \mathcal{C}^{op} \times \mathcal{D} \rightarrow \text{\textbf{Set}}$$
where \textbf{Set} is the category of sets and functions. The double category we obtain, which we denote \textbf{Prof}, has important extra properties and is an example of an \textit{equipment}. 

The equipment structure formally captures the fact that for a functor $F: \mathcal{C} \rightarrow \mathcal{D}$ there is an associated profunctor $F^*$, called the \textit{companion} of $F$, with $$F^*(c,d) = \mathcal{D}(Fc,d)$$ for a pair of objects $c \in \mathcal{C}$, $d \in \mathcal{D}$. 
This construction defines a pseudofunctor
$$(\cdot)^* : \textbf{Cat} \rightarrow \textbf{Prof}_h$$
into the horizontal 2-category of the double category \textbf{Prof}. 

In general, every double category $\mathbb{D}$ contains a horizontal 2-category $\mathbb{D}_h$ with 2-morphisms cells of the form
\begin{center}
	\begin{tikzpicture}
	
	\node (v1) at (-2.8,3.2) {$A$};
	\node (v3) at (-1.2,3.2) {$B$};
	\node (v2) at (-2.8,1.8) {$A$};
	\node (v4) at (-1.2,1.8) {$B$};
	\draw[->]  (v1) edge node[left, font= \scriptsize]{$1_A$} (v2);
	\draw[->]  (v1) edge node[above, font= \scriptsize]{$u$} (v3);
	\draw[->]  (v3) edge node[right, font= \scriptsize]{$1_B$} (v4);
	\draw[->]  (v2) edge node[below, font= \scriptsize]{$v$} (v4);
	
	\node at (-2,2.5) {$\alpha$};
	\end{tikzpicture}
\end{center}
,and similarly a vertical one $\mathbb{D}_v$.

We can define the double colimit of a "horizontal" diagram 
$$F: \mathcal{J} \rightarrow \mathbb{D}_h$$
as follows:
for two horizontal diagrams $F$ and $G$ we first define "vertical" transformations between them in the obvious way. For example a cell
\begin{center}
	\begin{tikzpicture}
	
	\node (v1) at (-2.8,3.2) {$A$};
	\node (v3) at (-1.2,3.2) {$B$};
	\node (v2) at (-2.8,1.8) {$C$};
	\node (v4) at (-1.2,1.8) {$D$};
	\draw[->]  (v1) edge node[left, font= \scriptsize]{$f$} (v2);
	\draw[->]  (v1) edge node[above, font= \scriptsize]{$u$} (v3);
	\draw[->]  (v3) edge node[right, font= \scriptsize]{$g$} (v4);
	\draw[->]  (v2) edge node[below, font= \scriptsize]{$v$} (v4);
	
	\node at (-2,2.5) {$\alpha$};
	\end{tikzpicture}
\end{center}
is a vertical transformation between $u$ and $v$ when we regard them as diagrams $\Delta^1 \rightarrow \mathbb{D}_h$. 
Then let the \textit{double colimit} of $F$ be an object $\text{dcolim}F \in \mathbb{D}$ equipped with a vertical transformation to the contant diagram $$F \Rightarrow \text{dcolim}F$$ such that given any other object $C$ with a transormation $F \Rightarrow C$ there is a unique vertical arrow $\text{dcolim}F \rightarrow C$ making the triangle commute
\begin{center}
	\begin{tikzpicture}
	
	\node (v1) at (-5,5) {$F$};
	\node (v2) at (-5,3.5) {$\text{dcolim}F$};
	\node (v3) at (-3,3.5) {$C$};
	\draw[->]  (v1) edge[double] (v2);
	\draw[->]  (v1) edge[double] (v3);
	\draw[->, dashed]  (v2) edge node[below, font = \scriptsize]{$\exists !$} (v3);
	\end{tikzpicture}
\end{center}

We can prove the following result:
\begin{restatable*}{theorem}{grothdcolim}
	\label{thm1}
	
	Let $\mathcal{J}$ be a small category and $F : \mathcal{J} \rightarrow \text{\textbf{Cat}}$ be a diagram of categories. Then $$\textbf{Gro}(\mathcal{C}) \cong \text{dcolim} F^*$$ where the right hand side is the double colimit in \textbf{Prof} of the diagram  $F^*$ given by the composition $$ \mathcal{J} \xrightarrow{F} \text{\textbf{Cat}} \xrightarrow{(\cdot)^*} \text{\textbf{Prof}}_h$$ of $F$ with the companion functor $(\cdot)^*$.
	
\end{restatable*}

We will show that this is not a coincidence at all but double categories and \textbf{sSet}-categories are part of a deeper structural web.

\subsection*{A 2-fold structure for spaces}

It is generally desirable to have a double category of spaces. Let us focus on simplicial sets for convenience. The first question that arises is
\begin{quote}
	Q: What should a proarrow (bimodule) between simplicial sets be?
\end{quote}

To answer this question it is not so useful to think of profunctors algebraically as functors $u:\mathcal{C}^{op}\times \mathcal{D} \rightarrow \text{\textbf{Set}}$, but rather in terms of \textit{collages}. A collage between $\mathcal{C}$ and $\mathcal{D}$ is a category obtained by adding new morphisms between the objects of $\mathcal{C}$ and those of $\mathcal{D}$ (but not in the other direction), which looks something like this 
\begin{center}
	\begin{tikzpicture}
	
	\draw[thick]  (-4,3.5) node (v5) {} ellipse (0.5 and 1.5);
	\draw[thick]  (-2,3.5) node (v6) {} ellipse (0.5 and 1.5);
	\node (v1) at (-4,4.5) {};
	\node (v2) at (-2,4.5) {};
	\node (v3) at (-4,4) {};
	\node (v4) at (-2,4) {};
	\node (v7) at (-4,3) {};
	\node (v8) at (-2,3) {};
	\node (v9) at (-4,2.5) {};
	\node (v10) at (-2,2.5) {};
	\draw[->, dashed]  (v1) edge (v2);
	\draw[->, dashed]  (v3) edge (v4);
	\draw[->, dashed]  (v5) edge (v6);
	\draw[->, dashed]  (v7) edge (v8);
	\draw[->, dashed]  (v9) edge (v10);
	\node at (-4,1.5) {$\mathcal{C}$};
	\node at (-2,1.5) {$\mathcal{D}$};
	\end{tikzpicture}
\end{center}

Given a profunctor $u$ we form a collage $\text{col}(u)$ with morhisms between $c\in \mathcal{C}$ and $d\in \mathcal{D}$ the elements of the set $u(c,d)$. It is not difficult to see that we have a one-to-one correspondence between profunctors and collages.

Even more elegantly, we observe that a collage between $\mathcal{C}$ and $\mathcal{D}$ is simply a functor
$$p: U \rightarrow \Delta^1$$
with $p^{-1}(0) = \mathcal{C}$ and $p^{-1}(0) = \mathcal{D}$, and $U$ the collage of the corresponding profunctor. 

Then we can simply define a proarrow between simplicial sets $X$ and $Y$ to be a morphism
$$p : U \rightarrow \Delta^1$$
with $p^{-1}(0) = X$ and $p^{-1}(1) = Y$.
This is precisely what is called $0$-cylinder in \cite{joyal2008theory} and correspondence (in the sense of quasi-categories) in \cite{lurie2009higher}. The virtue of this notion of proarrow is that every profunctor constitutes an example if we think of categories as simplicial sets. The next question that arises is:
\begin{quote}
	Q: Can we organize arrows and proarrows defined as above in a double category?
\end{quote}
Unfortunately the answer is negative, the reason being that there does not seem to be a reasonable way to compose proarrows.

However we may expand the scope of the last question and ask for a \textit{2-fold structure} rather than a double category. The jargon term "2-fold" is used to refer to structures consisting of:
\begin{itemize}
	\renewcommand\labelitemi{--}
	\item objects
	\item vertical and horizontal arrows
	\item cells (whose shape varies from structure to structure) which relate the vertical and horizontal arrows
	\item various compositions of arrows and cells
\end{itemize}
There is a plethora of these structures found in the literature: double categories, generalized multicategories (see \cite{cruttwell2010unified} or \cite{leinster2004higher} where they are called \textbf{fc}-multicategories), hypervirtual double categories (see \cite{koudenburg2015double}), and in some sense bisimplicial sets (even though we have no compositions).

In this case we have a positive answer to our question.
Our main point is that simplicial categories, that is functors $$\mathbb{E} : \Delta^{op} \rightarrow \text{\textbf{Cat}}$$
where $\Delta$ is the usual category of finite ordinals and order preserving maps and \textbf{Cat} is the category of categories, provide a natural candidate for such a structure. 

First, simplicial categories, being bisimplicial sets, can be considered as two-fold structures with
\begin{itemize}
	\item[($\bullet$)] objects those of $\mathbb{E}_0$
	\item[($\downarrow$)] vertical morphisms those of $\mathbb{E}_0$
	\item[($\triangle$)] horizontal simplicies which are the objects of $\mathbb{E}_n$ for various $n$
	\item[($\square$)] cells the bisimplicies of $\mathbb{E}$, in this case the morphisms of $\mathbb{E}_n$ for various $n$
\end{itemize}
The only novelty in this structure is that we do not have morphisms in the horizontal direction but rather simplices. 

We will construct a simplicial category $\textbf{sSet}^\sharp$ with category of $n$-simplicies the slice category over $\Delta$
$$\textbf{sSet}^\sharp_n = \textbf{sSet}/\Delta^n$$
In particular $\textbf{sSet}^\sharp_0 = \textbf{sSet}$ and $\textbf{sSet}^\sharp_1$ is the category of collages. This simplicial category seems to be the natural way to organize what are referred to as "higher correspondences" in \cite{lurie2009higher}. However we are not concerned with quasi-categorical considerations in this work.

Moreover, there is enough data in a simplicial category $\mathbb{E}$ to do double category theory. If $\mathcal{J}$ is an indexing category we may regard it as a simplicial category and this way a "horizontal diagram" is simply a natural transformation
$$F: \mathcal{J} \Rightarrow \mathbb{E}$$
Then it is straightforward to define its double colimit $\text{dcolim}F$ in  analogy with double category theory.

We will also propose an equipment property. The equipment property we postulate guarantees the existence of a companion horizontal $n$-simplex  $\sigma^* \in E_n$ associated to a vertical one
$$\sigma : \xrightarrow{f_1} \xrightarrow{f_2} \dots \xrightarrow{f_n}$$
 i.e. to a chain of $n$ composable morphisms of $E_0$ (and hence the term \textit{higher equipment}). We obtain a transformation
 $$(\cdot)^* : \mathbb{E}_0 \rightarrow \mathbb{E}$$
 
While in the double category \textbf{Prof} the companion of a functor represents its mapping cylinder, in the simplicial category $\textbf{sSet}^\sharp$ the companion $\sigma^*$ of a chain of morphisms as above in \textbf{sSet} represents the homotopy colimit of $\sigma$. We also capture the fact that this homotopy colimit, which we could call higher mapping cylinder, is naturally equipped with a map to $\Delta^n$.

Every 2-fold object has a vertical and/or horizontal structure incorporated in it. Double categories have vertical and horizontal 2-categories, generalized multicategories with one object have a horizontal multicategory (coloured operad), bisimplicial sets have horizontal and vertical simplicial sets. 

In this vein, a simplicial category $\mathbb{E}$ has a vertical simplicially enriched category $\mathbb{E}_v$ incorporated in its structure. This is perhaps the critical observation of this work, not in sight when the author initiated this study. The analogy may be written as
$$\frac{\text{double categories}}{\text{2-categories}} = \frac{\text{simplicial categories}}{\text{\textbf{sSet}-categories}}$$
Moreover $\textbf{sSet}^\sharp_v$ recaptures the usual simplicial enrichment of \textbf{sSet}.

We prove the following theorem, which characterizes homotopy colimits in $\mathbb{E}_v$ as double colimits in $\mathbb{E}$ for higher equipments $\mathbb{E}$. We interpret it as a higher dimensional version of Theorem \ref{thm1}.

\begin{restatable*}{theorem}{hodcolim}
	\label{thm2}
	
	Let $\mathbb{E}$ be a higher equipment, $\mathcal{J}$ be an indexing category and 
$$F : \mathcal{J} \rightarrow \mathbb{E}_0$$
be a functor. Then we have 
$$\text{dcolim}{F^*} \cong \text{hocolim} F$$
where $F^*$ is the composite
$$\mathcal{J} \xrightarrow{F} \mathbb{E}_0 \xrightarrow{(\cdot)^*} \mathbb{E}$$
of $F$ with the companion map, and the homotopy colimit on the right is taken to be in $\mathbb{E}_v$.
	
\end{restatable*}

Of course everything we mentioned above has a dual. We can define double limits, a dual equipment property and obtain homotopy limits in $\mathbb{E}_v$ as double limits.

\subsection*{Organization}

We have divided this document in two parts. The first part is mainly expository. We start with a blitz introduction to simplicial sets in \ref{sSet} and homotopy colimits of spaces in \ref{hocolimsp} , then we describe the Grothendieck construction for categories in \ref{gro},  profunctors and collages in \ref{prof}, and then focus on double category theory in \ref{dblcat}. We prove that the Grothendieck construction is a double colimit, even though we hope to have reduced the statement to a mere observation at that point. 

In the second part we start by discussing simplicial categories  and the sort of examples we have in mind in detail in \ref{scat}. We postulate the equipment property in \ref{equip} and define companions of higher simplicies in \ref{companion}. Then we define double colimits in simplicial categories in \ref{dcolim}, which is rather straightforward in analogy with double categories. In \ref{ssetcat} we recall some elements of the theory of \textbf{sSet}-categories: tensoring, cotensoring and homotopy colimits. We then observe that there is a vertical \textbf{sSet}-category of a given simplicial category and hence establish a strong analogy. Then we prove our main theorem in \ref{thm}, which says that homotopy colimits in the vertical \textbf{sSet}-category are double colimits. We conclude with a brief discussion on duality and homotopy limits in \ref{dual}.

\subsection*{Prerequisites}

It is our objective to make this document as readable as possible. Hence we have accompanied a lot of our work with illustrations both for aesthetic purposes and in order to convey geometric intuition. The author strongly believes geometric intuition is fundamental to category theory.

Besides basic category theory and algebraic topology,
the ideal reader is familiar with 2-category theory but perhaps hasn't encountered double categories much. We refer to \cite{leinster2004higher} or the more concise \cite{leinster1998basic} for 2-categories. It has to be said that perfect knowledge of 2-category theory is not a must to grasp the content of what we present, even though formally it is.

We start with a brief introduction to simplicial sets (where we also establish some notation and drawing conventions), but the ideal reader is also familiar with their basic combinatorics. Our references for this subject are \cite{goerss2009simplicial} and \cite{riehl2011leisurely}. 

To the best of our capabilities anything beyond these is presented in expository style. Of course, the author is by no means an authority on the majority of the subjects treated here and the reader is advised to consult the references for better understanding of the topics at hand. 

\subsection*{Notation conventions}

We will typically denote known categories by boldface letters and name them after  the objects of the category. 
\begin{center}
\begin{tabular}{c | c}
	\text{Notation}	&  \text{Corresponding category}\\ 
	\hline
	\textbf{Set}	& \text{sets and function}  \\ 
	\textbf{Top}	& \text{topological spaces and continuous maps} \\ 
	\textbf{Cat}	& \text{categories and functors} \\ 
	\textbf{sSet}	&  \text{simplicial sets and simplicial maps}\\ 
	\textbf{CW}	& \text{CW-complexes and cellular maps}  \\
\end{tabular} 
\end{center}
Generic categories will be denoted by caligraphic letters $\mathcal{C}$, $\mathcal{D}$ etc and their opposites $\mathcal{C}^{op}$. If $x,y$ are objects in $\mathcal{C}$ we simply write $x,y \in \mathcal{C}$ and we denote the set of morphisms between them $\mathcal{C}(x,y)$. 
	
\section{The build up}

\epigraph{The uniting feature of all these structures is that they are purely algebraic
	in definition, yet near-impossible to understand without drawing or visualizing
	pictures. They are inherently geometrical.}{Tom Leinster \\ "Higher Operads, Higher Categories" \\ \cite{leinster2004higher}}

In this section we go through a series of topics and expose the ideas and observations that lead us to the study of higher equipments. We keep the discussion mainly expository although full discussions of each topics require a lot of space. Hence the focus will be on the elements which are most relevant to our purposes, and conveying as much geometric intuition as possible. 

\subsection{Simplicial sets}
\label{sSet}

Let $\Delta$ be the category with
\begin{itemize}
	\item[($\bullet$)] objects  finite ordinals $\{0, 1, \dots ,n  \}$ which we denote $[n]$
	\item[($\rightarrow$)] morphisms  order preserving maps
\end{itemize}
The category $\Delta$ plays an essential role in homotopy theory: it captures the essential combinatorial features of \textit{simplices}.

First, as shown in the above table, we may depict finite ordinals themselves as (oriented) simplices. We can also think of order preserving maps as assigning vertices to vertices, edges to edges and so on. Degenerate maps are allowed, where a higher face collapses into a smaller dimensional one. 
For example there is a map $[1] \rightarrow [0]$. 

\begin{table}
	\centering
	\begin{tabular}{|c|c|c | c|}
		\hline
		[0] & [1] & [2] & [3] \\
		\hline
		
		\begin{tikzpicture}
		
		\node at (-1.5,1.5) {$\bullet$};
		\node at (-1.5,1.8) {$0$};
		\end{tikzpicture}
		
		&
		\begin{tikzpicture}
		
		\node (v1) at (-1.5,1.5) {$\bullet$};
		\node at (-1.5,1.8) {$0$};
		
		\node (v2) at (0,1.5) {$\bullet$};
		
		\draw[->] (v1) edge (v2) ;
		
		\node at (0,1.8) {$1$};
		\end{tikzpicture} &
		\begin{tikzpicture}
		
		\node (v1) at (-1.5,1.5) {$\bullet$};
		\node at (-1.8,1.4) {$0$};
		
		\node (v3) at (0,1.5) {$\bullet$};
		
		\node at (0.2,1.4) {$2$};
		\node (v2) at (-0.8,2.6) {$\bullet$};
		
		\node at (-0.8,2.9) {$1$};
		
		\draw[->]  (v1) edge (v2);
		\draw[->]  (v2) edge (v3);
		\draw[->]  (v1) edge (v3);
		
		\end{tikzpicture} &
		\begin{tikzpicture}
		
		\node (v1) at (-1.5,1.5) {$\bullet$};
		\node at (-1.8,1.4) {$0$};
		
		\node (v3) at (0,1.5) {$\bullet$};
		
		\node at (0.2,1.4) {$2$};
		\node (v2) at (-0.8,2.6) {$\bullet$};
		
		\node at (-0.8,2.9) {$1$};
		
		\draw[->]  (v1) edge (v2);
		\draw[->,dashed]  (v2) edge (v3);
		\draw[->]  (v1) edge (v3);
		\node at (0.3,2.7) {$3$};
		
		\node (v4) at (0.3,2.4) {$\bullet$};
		\draw[->]  (v1) edge (v4);
		\draw[->]  (v2) edge (v4);
		\draw[->]  (v3) edge (v4);
		
		\end{tikzpicture}
		
		\\
		\hline
	\end{tabular}
\caption{The first four simplices}
\end{table}

A simplicial set is a functor 
$$X: \Delta^{op} \rightarrow \textbf{Set}$$
We briefly describe how to read the geometry of $X$ from its definition as a functor.

$X$ assigns to each ordinal $[n]$ a set which we denote $X_n$. The elements of $X_n$ are said to be the $n$-simplices of $X$ and visualized accordingly. An $n$-simplex has $(n+1)$ vertices, which are $0$-simplices, and $(n+1)$ faces, which are $(n-1)$-simplices. For a simplex $x \in X_n$ we denote its vertices $x_0$, $x_1$, \dots, $x_n$ and its faces $d_0x$, $d_1x$, \dots $d_nx$. The face $d_ix$ is obtained by "deleting" the vertex $x_i$ from $x$. This way for example we draw $x \in X_2$ as
\begin{center}
	\begin{tikzpicture}
	
	\node (v1) at (-3.5,0.5) {$x_0$};
	\node (v3) at (-1.5,0.5) {$x_2$};
	\node (v2) at (-2.5,2) {$x_1$};
	\draw[->]  (v1) edge node[left, font = \scriptsize] {$d_2x$} (v2);
	\draw[->]  (v2) edge node[right, font = \scriptsize] {$d_0x$} (v3);
	\draw[->]  (v1) edge node[below, font = \scriptsize] {$d_1x$} (v3);
	\node at (-2.5,1) {$x$};
	\end{tikzpicture}
\end{center} 

We recover the above as follows: there is a map $v^i: [0] \rightarrow [n]$ in $\Delta$ which assigns $0 \mapsto i$. Since $X$ is a contravariant functor, $v^i$ induces a map 
$$v_i : X_n \rightarrow X_0$$
For $x \in X_n$ we obtain its vertices to be $x_i = v_i(x)$. For faces we consider the map $d^i: [n-1] \rightarrow [n]$ in $\Delta$ given by "skipping" $i$
$$d^i(j) = \begin{cases}
 j & , j<i \\
 j + 1 & , j \geq i
\end{cases}$$
This map induces $$d_i : X_n \rightarrow X_{n-1}$$
and we obtain the faces $d_ix$.

The speciality of simplicial sets consists in \textit{degeneracies}: we regard every simplex to be a degenerate higher simplex as well. Formally, we obtain degeneracies by considering the map $s^i : [n+1] \rightarrow [n]$ in $\Delta$ defined as 
$$s^i(j) = \begin{cases}
j & , j\leq i \\
j - 1 & , j > i
\end{cases}$$
This induces the degeneracy map
$$s_i : X_n \rightarrow X_{n+1}$$

Geometrically we may think of the simplex $s_ix$ as being obtained by "replicating" $x_i$. For a vertex $x \in X_0$ we draw the $1$-simplex $s_0x$ as
\begin{center}
	\begin{tikzpicture}

	\node (v1) at (-3.5,3.5) {$x$};
	\node (v2) at (-1.5,3.5) {$x$};
	\draw[-]  (v1) edge[double] node[above]{$s_0x$} (v2);
	\end{tikzpicture}
\end{center}
with the double line suggesting equality. For a $1$-simplex $x_0 \xrightarrow{x} x_1$ we obtain the degenerate $2$-simplicies $s_0x$ and $s_1x$ by replicating $x_0$ and $x_1$ respectively. We draw them as
\begin{center}
	\begin{tikzpicture}
	
	\node (v1) at (-3.5,0.5) {$x_0$};
	\node (v3) at (-1.5,0.5) {$x_1$};
	\node (v2) at (-2.5,2) {$x_0$};
	\draw  (v1) edge[double] node[left, font = \scriptsize] {$s_0x_0$} (v2);
	\draw[->]  (v2) edge node[right, font = \scriptsize] {$x$} (v3);
	\draw[->]  (v1) edge node[below, font = \scriptsize] {$x$} (v3);
	\node at (-2.5,1) {$s_0x$};
	\end{tikzpicture} \ \ \ \ \ \ \ \ 
	\begin{tikzpicture}
	
	\node (v1) at (-3.5,0.5) {$x_0$};
	\node (v3) at (-1.5,0.5) {$x_1$};
	\node (v2) at (-2.5,2) {$x_1$};
	\draw[->]  (v1) edge node[left, font = \scriptsize] {$x$} (v2);
	\draw  (v2) edge[double] node[right, font = \scriptsize] {$s_0x_1$} (v3);
	\draw[->]  (v1) edge node[below, font = \scriptsize] {$x$} (v3);
	\node at (-2.5,1) {$s_1x$};
	\end{tikzpicture}
\end{center} 

As geometric intuition dictates we have for example $d_2s_0x = s_0x_0$ and $d_0s_0x = d_1s_0x = x$. Such identities relating faces and degeneracies are called \textit{simplicial identities}. We can imagine a large number of simplicial identities in higher dimensions, but quite amazingly we can deduce all simplicial identities from five elemental ones listed below:
\begin{align*}
	d_id_j &= d_{j-1}d_i, \ \ \ \ \ \ \  i<j \\
	s_is_j &= s_{j+1}s_i, \ \ \ \ \ \ \  i \leq j \\
	d_is_j &=\begin{cases}
	1, & i=j,j+1 \\
	s_{j-1}d_i, & i<j \\
	s_jd{i-1}, & i>j+1
	\end{cases}
	\end{align*}

Specifying a sequence of sets $X_n$, $n= 0, 1, \dots $, and functions $d_i, s_i$ satisfying the above relations is the same as specifying a functor $X$ from $\Delta^{op}$ to \textbf{Set} (see \cite{mac2013categories} for details). So the simplicial identities encapsulate all our geometric intuition and we have a purely combinatorial definition of simplicial sets. 

Simplicial sets form a category, \textbf{sSet}, whose morphisms are natural transformations. Alternatively, we may think of a morphism $f: X \rightarrow Y$ between simplicial sets as given by a sequence of functions $f_n: X_n \rightarrow Y_n$ which respect faces and degeneracies. We insist that both the functorial and the combinatorial perspective are important and choose not to prefer one over the other.

We started by thinking of ordinals themselves a simplices. This is formally justified due to the Yoneda embedding
$$\Delta \hookrightarrow \textbf{sSet}$$
We denote the simplicial set corresponding to $[n]$ by $\Delta^n$, and refer to it as the \textit{standard $n$-simplex}. Its $m$-simplices $\Delta^n_m$ are simply the order preserving maps $[m] \rightarrow [n]$. Also, intuitively a map from $\Delta^n$ to $X$ is just defined by picking an $n$-simplex of $X$. This is formally justified by the Yoneda Lemma which tells us 
$$X_n = \textbf{sSet}(\Delta^n, X)$$

Examples of simplicial sets in nature are too many to count and present here. First, we can just draw one. For instance
\begin{center}
	\begin{tikzpicture}
	
	\node (v2) at (-4,2) {$a$};
	\node (v1) at (-2.5,2) {$b$};
	\node (v3) at (-1,2.5) {$c$};
	\node (v4) at (-1.5,1) {$d$};
	\draw[->]  (v1) edge node[above, font = \scriptsize] {$\tau$} (v2);
	\draw[->]   (v1) edge (v3);
	\draw[->]   (v3) edge (v4);
	\draw[->]   (v1) edge (v4);
	\node at (-1.7,1.8) {$\sigma$};
	\end{tikzpicture}
\end{center}
This simplicial set has one $2$-simplex, four $1$-simplices and four vertices which are non-degenerate and the rest of its simplices are degenerate (meaning they are of the form $s_ix$ for some simplex $x$). It is common practice to hide the degenerate simplices when drawing pictures. 

The topological $n$-simplex, which we denote $|\Delta^n|$, is defined to be the space
$$|\Delta^n| = \{(x_0, \dots, x_n) \in \mathbb{R}^{n+1} : x_0 + \dots x_n = 1 \} $$
with subspace topology.
For a topological space $T$ we define its \textit{singular simplicial set}, $ST$, to have $n$-simplices 
$$ST_n = \textbf{Top}(|\Delta^n|, X)$$
with the obvious face and degeneracy maps.

The assignment $T \mapsto ST$ extends to define a functor
$$S: \textbf{Top} \rightarrow \textbf{sSet}$$
 We encounter this simplicial set when we define the singular homology of a space.

The singular simplicial set is a good example to illustrate the philosophy of degeneracies as well. The points of the space $T$ are the $0$-simplices of $ST$ and paths in $T$ are the $1$-simplices of $ST$. However we can think of a point as a constant path, which translates as degenerate simplex in our language. 

In the world of categories we may think of the ordinal $[n]$ as a poset and hence as a category. This category has objects $\{0, 1, \dots ,n  \}$ and a morphism $i \rightarrow j$ if $i<j$. We denote it $\Delta^n$ by abuse of notation and refer to it as the \textit{ categorical $n$-simplex}.

For a category $\mathcal{C}$ we define its \textit{nerve} $N\mathcal{C}$ to be the simplicial set with $n$-simplices given as
$$N\mathcal{C}_n = \textbf{Cat}(\Delta^n,\mathcal{C})$$
In other words an $n$-simplex in $\sigma \in N\mathcal{C}_n$ is just a chain of $n$ composable morphisms
$$\sigma : x_0 \xrightarrow{f_1} x_1 \xrightarrow{f_2} \dots \xrightarrow{f_n} x_n$$
in $\mathcal{C}$. The faces $d_i\sigma$ are given by composing
$$d_i\sigma : x_0 \xrightarrow{f_1} \dots x_{i-1} \xrightarrow{f_{i+1}f_i} x_{i+1} \dots \xrightarrow{f_n} x_n$$
for $0<i<n$ and the faces $d_0\sigma$, $d_n\sigma$ by deleting $x_0$ and $x_n$ respectively. Degeneracies are obtained by inserting identity morphisms in the chain
$$s_i\sigma : x_0 \xrightarrow{f_1} \dots x_i \xrightarrow{1_{x_i}} x_i \dots \xrightarrow{f_n} x_n$$
The nerve construction defines a functor, which as a matter of fact is a fully-faithful embedding
$$N : \textbf{Cat} \hookrightarrow \textbf{sSet}$$
For this reason, for the rest of this work we drop the symbol $N$ to indicate the nerve and treat categories as simplicial sets whenever necessary.

Again, the philosophy of degeneracies is reflected in the common practice of category theory: when drawing a commutative diagram we do not draw the infinitely many identity arrows. 

These examples can be seen to be obtained by a Yoneda Extension along a cosimplicial object. Meaning if we have a category $\mathcal{C}$ and a functor 
$$s: \Delta \rightarrow \mathcal{C}$$
we may interpret $s[n]$ to be the $n$-simplex in $\mathcal{C}$ and obtain a functor 
$$S: \mathcal{C} \rightarrow \textbf{sSet}$$
exactly as in the examples. We refer the reader to \cite{riehl2011leisurely} for this perspective.

Another source of examples is the new-from-old type of constructions in \textbf{sSet} itself. Products, coproducts and quotients are defined in the obvious way. Also, given a simplicial set and a subset of simplices we may define the simplicial subset generated by the subset. Important examples are 
\begin{itemize}
	\renewcommand\labelitemi{--}
	\item the \textit{simplicial sphere}  $\partial \Delta^n \subset \Delta^n$,  generated by the faces of $\Delta^n$
	\item the \textit{horn} $\Lambda^n_i \subset \Delta^n$, generated by the faces of $\Delta^n$ except the $i$-th one
\end{itemize}
We trust the reader to make the last paragraph precise (or see \cite{goerss2009simplicial}). 

The geometry of simplicial sets is rigid in comparison to topological spaces or CW-complexes, in the sense that there are fewer morphisms in \textbf{sSet} then in \textbf{Top}. However we have enough data to do homotopy theory. For example we may define a homotopy in \textbf{sSet} to be a morphism 
$$X \times \Delta^1 \rightarrow Y$$
We may define homotopy groups as well and create a category with weak homotopy equivalences whose localization is equivalent to the homotopy category of CW-complexes. So, at least homotopically, simplicial sets are a model for spaces. We refer the reader to \cite{goerss2009simplicial} for these results. 

There is much more to say but hopefully we have conveyed a few points. In particular the "deletion-replication" method of thinking about faces and degeneracies will be relevant in understanding the next section in which we study simplicial categories. 

\subsection{Homotopy colimits of spaces}
\label{hocolimsp}

Let \textbf{Top} be the category of topological spaces and continuous maps and let $\mathcal{J}$ be a small category. Given a diagram of spaces 
$$X : \mathcal{J} \rightarrow \textbf{Top}$$ 
we can construct a space $\text{colim}X$, the colimit of the diagram $X$. Let us denote $X_i = X(i)$ for $i\in \mathcal{J}$ and $f_\alpha : X_i \rightarrow X_j$ the image $X(\alpha)$ of a morphism $\alpha: i \rightarrow j$ in $\mathcal{J}$. We can define the space $\text{colim}X$ explicitly as follows: 

\begin{enumerate}
	\item As a set define $$\text{colim}X = \coprod_{i \in \mathcal{J}} X_i / \sim$$ where $\sim$ is the smallest equivalence relation such that for $x \in X_i$, $y \in X_j$ we have $x \sim y$ if there is a morphism $\alpha: i \rightarrow j$ in $\mathcal{J}$ such that $f_\alpha(x)=y$
	\item Give this set the smallest topology such that the induced maps $X_i \rightarrow \text{colim}X$ are continuous
\end{enumerate}

It is well known that the colimit construction is not well-behaved homotopically. We recall the standard example. Let $\mathcal{J} = (1 \leftarrow 0 \rightarrow 2)$ be the pushout category. Below we depict two diagrams $X$ and $X^\prime$, a transformation $ X \Rightarrow X^\prime$ and the induced map on their colimits: 

\begin{center}
	\begin{tikzpicture}

\node (v1) at (-4.5,3.5) {$X$};
\node (v2) at (-4.5,2) {$X^\prime$};
\node at (-4,3.5) {:};
\node at (-4,2) {:};
\node (v4) at (-3.5,3.5) {$D^{n+1}$};
\node (v3) at (-2,3.5) {$S^n$};
\node (v5) at (-0.5,3.5) {$D^{n+1}$};
\node (v7) at (-3.5,2) {$*$};
\node (v6) at (-2,2) {$S^n$};
\node (v8) at (-0.5,2) {$*$};
\node (v9) at (0,2.8) {};
\node (v10) at (1,2.8) {};
\node (v11) at (1.5,3.5) {$S^{n+1}$};
\node (v12) at (1.5,2) {$*$};
\draw[->]  (v1) edge[double] (v2);
\draw[->,]  (v3) edge node[above, font = \scriptsize] {$\partial$} (v4);
\draw[->]  (v3) edge node[above, font = \scriptsize] {$\partial$}(v5);
\draw[->]  (v6) edge (v7);
\draw[->]  (v6) edge (v8);
\draw[->]  (v4) edge (v7);
\draw[->]  (v5) edge (v8);
\draw[|->]  (v9) edge node[above, font = \scriptsize] {colim}  (v10);
\draw[->]  (v11) edge (v12);
\draw[->]  (v3) edge node[left, font = \scriptsize] {$1_{S^n}$}(v6);
\end{tikzpicture}
\end{center}
Here $S^n$ and $D^n$ denote the $n$-sphere and the $n$-disc as usual, $\partial$ the boundary inclusion and $*$ the one-point space. The components of this map of diagrams are homotopy equivalences but the resulting colimits are not homotopy equivalent. 

The homotopy colimit construction is a remedy for this problem. 
 Roughly speaking, instead of identifying things we make them homotopic. This proccess can be described as follows:

\begin{enumerate}
	\item For every chain of $n$ morphisms ($n$-simplex) in $\mathcal{J}$ $$i_0 \xrightarrow{\alpha_1} i_1 \rightarrow ... \rightarrow i_{n-1} \xrightarrow{\alpha_n} i_n$$
	and a point $x \in X_{i_0}$ we declare a (topological) $n$-simplex $\Delta^n(x)$ whose edges are $\{x, f_{\alpha_1}(x), f_{\alpha_2\alpha_1}(x),..., f_{\alpha_n \dots\alpha_1}(x) \}$. In particular for $n=0$ we just obtain the points of the spaces $X_i$.
	\item Make the obvious identifications. For example $\Delta^{n-1}(f_{\alpha_1}(x))$ is to be considered as a face of $\Delta^n(x)$. Then impose the appropriate topology.
\end{enumerate}

This time we could not be really precise in our recipe but we will come back to this topic in \ref{ssetcat} in the context of simplicially enriched categories. We refer the reader to \cite{dugger2008primer} for a broader viewpoint on the subject (and better pictures). Our aim is to provide some geometric intuition.  

Let $\mathcal{J} = \Delta^1 = (0 \rightarrow 1)$ be the interval category, so that a diagram $X : \mathcal{J} \rightarrow \textbf{Top}$ is just a continuous map $$f: X_0 \rightarrow X_1$$. In this case we join with an interval ($1$-simplex) every point $x \in X_0$ with its image $f(x) \in X_1$. The resulting space (after imposing the correct topology) is the mapping cylinder of $f$ (see Figure \ref{mapcyl}) which we denote $M_f$ as usual. 

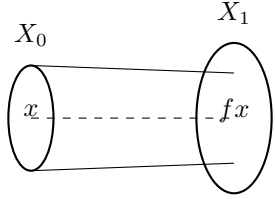
\begin{figure}
	\begin{center}
		\begin{tikzpicture}

		\draw[thick] (-4.1,3.3) ellipse (0.3 and 0.7);
		
		\draw[thick]  (-1.4,3.3) ellipse (0.5 and 1);
		
		\draw (-4.1,4) -- (-1.4,3.9);
		\draw (-4.1,2.6) -- (-1.4,2.7);
		
		\node at (-4.1,4.4) {$X_0$};
		\node at (-1.4,4.7) {$X_1$};
		
		\draw[dashed] (-4.1,3.3) -- (-1.4,3.3);
		\node at (-4.1,3.4) {$x$};
		\node at (-1.4,3.4) {$fx$};
		
		\end{tikzpicture}
	\end{center}
	
	\caption{The mapping cylinder of $f: X_0 \rightarrow X_1$}
		\label{mapcyl}
\end{figure}

If $\mathcal{J} = \Delta^2 = (0 \rightarrow 1 \rightarrow 2)$ then a diagram $X : \mathcal{J} \rightarrow \textbf{Top}$  is simply a commutative triangle in the category of spaces 
$$X_0 \xrightarrow{f} X_1 \xrightarrow{g} X_2$$
 We form its homotopy colimit by first attaching $1$-simplices according to $f$,$g$ and $gf$ to obtain the mapping cylinders $M_f$, $M_g$, and $M_{gf}$. Then we attach the $2$-simplices corresponding to the chain $(f,g)$. The $1$-faces of these $2$-simplicies will lie in $M_f$, $M_g$ and $M_{gf}$ (see Figure \ref{hocolim}).

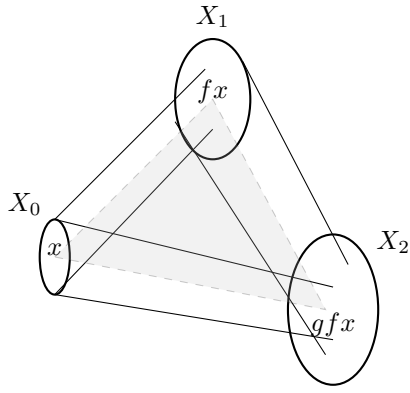
\begin{figure}
	\begin{center}
		\begin{tikzpicture}
		
		\draw[thick]  (-4.1,3.6) ellipse (0.2 and 0.5);
		\draw[thick]  (-2,5.7) ellipse (0.5 and 0.8);
		\draw[thick]  (-0.4,2.9) ellipse (0.6 and 1);
		
		\draw (-4.1,4.1) -- (-2.1,6.1);
		\draw (-4.1,3.1) -- (-2,5.3);
		
		\draw (-1.6,6.2) -- (-0.2,3.5);
		\draw (-2.5,5.4) -- (-0.5,2.3);
		
		\draw (-4.1,4.1) -- (-0.4,3.2);
		\draw (-4.1,3.1) -- (-0.4,2.5);
		
		\draw[dashed,  fill = lightgray, opacity=0.2] (-4.1,3.6)--(-2,5.7)--(-0.5,2.9)--cycle;
		
		\node at (-4.1,3.7) {$\tiny x$};
		\node at (-2,5.8) {$fx$};
		\node at (-0.4,2.7) {$gfx$};
		
		\node at (-4.5,4.3) {$X_0$};
		\node at (-2,6.8) {$X_1$};
		\node at (0.4,3.8) {$X_2$};
		\end{tikzpicture}
	\end{center}
	\caption{The homotopy colimit of $X_0 \xrightarrow{f} X_1 \xrightarrow{g} X_2$}
	\label{hocolim}
\end{figure}

 In general, if $$\sigma = (X_0 \xrightarrow{f_1} X_1 \rightarrow ... \rightarrow X_{n-1} \xrightarrow{f_n} X_n)$$ is an $n$-simplex in the category of spaces and we regard it as a diagram $$X : \Delta^n \rightarrow \textbf{\text{Top}}$$
  ,its homotopy colimit of this diagram should be considered as the mapping cylinder of $\sigma$. We may call it \textit{higher mapping cylinder} and denote it $M_\sigma$. 
  
  In the example of the two pushout diagrams above the homotopy colimit of both rows is homotpy equivalent to $S^{n+1}$. We invite the reader to contemplate this fact in light of the above figures.

\subsection{The Grothendieck construction for categories}
\label{gro}

Just as the notion of colimit makes sense in every category if described in the proper language, the notion of homotopy colimit as described above makes sense in a variety of categories. Our recipe depends on the presence of simplices in the category of spaces. In the world of categories, as mentioned in \ref{sSet}, we can regard the poset $\Delta^n = (0 \rightarrow 1 \rightarrow ... \rightarrow n-1 \rightarrow n)$ as the \textit{categorical $n$-simplex}.

Let \textbf{Cat} be the category of small categories and functors between them.  Now for a small category $\mathcal{J}$ and a diagram 
$$F : \mathcal{J} \rightarrow \textbf{\text{Cat}}$$ 
we can reiterate our recipe and produce a category serving as the homotopy colimit of this diagram. In this case we can decribe the resulting category explicitely as the \textit{Grothendieck construction} of the diagram $\mathcal{C}$, which we denote $\text{\textbf{Gro}}(\mathcal{C})$.

Let $\mathcal{C}_i = F(i)$ for $i \in \mathcal{J}$ and $F_\alpha: \mathcal{C}_i \rightarrow \mathcal{C}_j$ denote the image of a morphism $\alpha: i \rightarrow j$ in $\mathcal{J}$. Then the objects of $\textbf{Gro}(\mathcal{C})$ are  objects of the categories $\mathcal{C}_i$ for $i \in \mathcal{J}$. 
$$ob(\textbf{Gro}(\mathcal{C})) = \coprod_{i \in \mathcal{J}} ob(\mathcal{C}_i)$$
Given two objects $x \in \mathcal{C}_i$ and $y \in \mathcal{C}_j$ a morphism between them is a pair $(\alpha,f)$ where $\alpha: i \rightarrow j$ is an arrow in $\mathcal{J}$ and $f :F_\alpha(x) \rightarrow y$ is an arrow in $\mathcal{C}_j$. It helps to visualise $(\alpha,f)$ as 

\begin{center}
	\begin{tikzpicture}

\node (v1) at (-3,3.5) {$x$};
\node (v2) at (-1.5,3.5) {$F_\alpha x$};
\node (v3) at (-1.5,2) {$y$};
\draw[|->]  (v1) edge node[above, font = \scriptsize]{$F_\alpha$} (v2);
\draw[->]  (v2) edge node[right, font = \scriptsize]{$f$} (v3);
\end{tikzpicture}
\end{center}
We can think of the "assignment" arrow $F_\alpha : x \mapsto F_\alpha x$ as the $1$-simplex attached to join $x$ and $F_\alpha x$ corresponding to the morphism $\alpha: i \rightarrow j$ in $\mathcal{J}$. As a matter of fact this morphism is precisely $(\alpha,1_{F_\alpha x})$. 

Now let $z \in \mathcal{C}_k$ be another object, $\beta : j \rightarrow k$ be an arrow in $\mathcal{J}$ and $(\beta,g) : y \rightarrow z$ be a morphism in $\textbf{Gro}(\mathcal{C})$. We define its composition with $(F,f)$ via the formula 
$$(\beta,g)(\alpha,f) = (\beta\alpha,g \circ F_\beta(f))$$
The following picture may help in understanding this composition

\begin{center}
	\begin{tikzpicture}
	
	\node (v1) at (-3,3.5) {$x$};
	\node (v2) at (-1.5,3.5) {$F_\alpha x$};
	\node (v3) at (-1.5,2) {$y$};
	\draw[|->]  (v1) edge node[above, font = \scriptsize]{$F_\alpha$} (v2);
	\draw[->]  (v2) edge node[right, font = \scriptsize]{$f$} (v3);
	
	\node (v6) at (0,3.5) {$F_{\beta\alpha}x$};
	\node (v4) at (0,2) {$F_\beta y$};
	\node (v5) at (0,0.5) {$z$};
	\draw[|->]  (v3) edge node[above, font = \scriptsize]{$F_\beta$} (v4);
	\draw[->]  (v4) edge node[right, font = \scriptsize]{$g$} (v5);
	\draw[|->, dashed]  (v2) edge node[above, font = \scriptsize]{$F_\beta$} (v6);
	\draw[->, dashed]  (v6) edge node[right, font = \scriptsize]{$F_\beta f$} (v4);
	\draw[|->]  (v1) edge[bend left=40] node[above, font = \scriptsize]{$F_{\beta\alpha}$} (v6);
	\draw[->]  (v6) edge[bend left=40] node[right, font = \scriptsize]{$g \circ F_\beta f$} (v5);
	\end{tikzpicture}
\end{center}
We may think of the composable pair $(F_\alpha,F_\beta)$ in the picture as the $2$-simplex attached at $x$ corresponding to the chain $i \xrightarrow{\alpha} j \xrightarrow{\beta} k$ in $\mathcal{J}$. And so on in higher dimension for longer chains.

\subsection{Profunctors}
\label{prof}

Let $\mathcal{C}$, $\mathcal{D}$ be two categories and let \textbf{Set} denote the category of sets and functions. A \textit{profunctor} $u$ from $\mathcal{C}$ to $\mathcal{D}$ is a functor $$u: \mathcal{C}^{\text{op}} \times \mathcal{D} \rightarrow \text{\textbf{Set}}$$
where $\mathcal{C}^{\text{op}}$ denotes the opposite category of $\mathcal{C}$.

\subsubsection{Collages}

 We may understand profunctors in a few ways. In analogy with algebra a profunctor $u$ can be thought of as a $(\mathcal{C}, \mathcal{D})$-\textit{bimodule}. If $\mathcal{C}$ is a group $G$, meaning it has one object and all arrows are invertible, then a functor $\mathcal{C}^{\text{op}} \rightarrow \text{\textbf{Set}}$ is a left $G$-set and a functor $\mathcal{C} \rightarrow \text{\textbf{Set}}$ is a right $G$-set. If $\mathcal{D}$ is also a group $H$ then a profunctor $u$ is a $(G,H)$-biset, meaning it is a set with left $G$-action and right $H$-action. 
 
 The other perspective is to understand profunctors in terms of collages.
 
 \begin{definition}
 	Let $\mathcal{C}$ and $\mathcal{D}$ be categories. A collage between $\mathcal{C}$ and $\mathcal{D}$ is a category $U$ such that:
 	\begin{itemize}
 		\item[i)] the objects of $U$ are precisely those of $\mathcal{C}$ and $\mathcal{D}$
 		$$\text{ob}(U) = \text{ob}(\mathcal{C}) \coprod \text{ob}(\mathcal{D})$$
 		\item[ii)] if $c,c^\prime \in \mathcal{C}$ then $U(c,c^\prime) = \mathcal{C}(c,c^\prime)$ and if $d,d^\prime \in \mathcal{D}$ then $U(d,d^\prime) = \mathcal{D}(d,d^\prime)$
 		\item[iii)] if $c \in \mathcal{C}$ and $d \in \mathcal{D}$ then $U(d,c) = \emptyset$
 	\end{itemize}
 \end{definition}

In other words a collage is a category constructed by adding new morphisms from objects of $\mathcal{C}$ to objects of $\mathcal{D}$, which we abstractly depict as

\begin{center}
	\begin{tikzpicture}
	
	\draw[thick]  (-4,3.5) node (v5) {} ellipse (0.5 and 1.5);
	\draw[thick]  (-2,3.5) node (v6) {} ellipse (0.5 and 1.5);
	\node (v1) at (-4,4.5) {};
	\node (v2) at (-2,4.5) {};
	\node (v3) at (-4,4) {};
	\node (v4) at (-2,4) {};
	\node (v7) at (-4,3) {};
	\node (v8) at (-2,3) {};
	\node (v9) at (-4,2.5) {};
	\node (v10) at (-2,2.5) {};
	\draw[->, dashed]  (v1) edge (v2);
	\draw[->, dashed]  (v3) edge (v4);
	\draw[->, dashed]  (v5) edge (v6);
	\draw[->, dashed]  (v7) edge (v8);
	\draw[->, dashed]  (v9) edge (v10);
	\node at (-4,1.5) {$\mathcal{C}$};
	\node at (-2,1.5) {$\mathcal{D}$};
	\node at (-3,5) {$U$};
	\end{tikzpicture}
\end{center}

Given a collage $U$ we may define a profunctor
$$\bar{U} : \mathcal{C}^{op} \times \mathcal{D} \rightarrow \text{\textbf{Set}}$$
simply by $$\bar{U}(c,d) = U(c,d)$$ for $(c,d) \in \mathcal{C}^{op} \times \mathcal{D}$. The action of $\bar{U}$ on morphisms is prescribed by the composition in $U$. More precisely, given morphisms $f: c^\prime \rightarrow c$ in $\mathcal{C}$ and $g: d \rightarrow d^\prime$ in $\mathcal{D}$, so that we have a morphism $(f,g) : (c,d) \rightarrow (c^\prime, d^\prime)$ in $\mathcal{C}^{op} \times \mathcal{D}$, we define the function $\bar{U}(f,g) : \bar{U}(c,d) \rightarrow \bar{U}(c^\prime, d^\prime)$ as 
$$\bar{U}(f,g)(x) = g \circ x \circ f$$
where $\circ$ is the composition in $U$ and $x \in U(c,d)$.

Given a profunctor 
$$u: \mathcal{C}^{op} \times \mathcal{D} \rightarrow \text{\textbf{Set}}$$
we may define a collage between $\mathcal{C}$ and $\mathcal{D}$, called the collage of $u$ and denoted $\text{col}(u)$, by letting the set of morphisms from $c \in \mathcal{C}$ to $d \in \mathcal{D}$ to be $u(c,d)$. Composition in $\text{col}(u)$ is given by the action of $u$. Given $f: c^\prime \rightarrow c$ in $\mathcal{C}$, $g: d \rightarrow d^\prime$ in $\mathcal{D}$ and $x \in u(c,d)$ we define their composite to be
$$g \circ x \circ f = u(f,g)(x)$$

\begin{proposition}
	The above constructions are mutual inverses of each other and they define a one-to-one correspondence between the collection of profunctors from $\mathcal{C}$ to $\mathcal{D}$ to the collection of collages between $\mathcal{C}$ and $\mathcal{D}$ for any two categories $\mathcal{C}$, $\mathcal{D}$.
\end{proposition}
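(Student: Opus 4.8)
The plan is to verify that the two constructions $u \mapsto \mathrm{col}(u)$ and $U \mapsto \bar{U}$ are well-defined and then check that the two round-trip composites are literally the identity. Before doing anything else, I would confirm that $\mathrm{col}(u)$ really is a category: associativity and unitality of its composition must be checked. Composition in $\mathrm{col}(u)$ splits into cases depending on which of $\mathcal{C}$, $\mathcal{D}$, or the ``new'' hom-sets the composable morphisms lie in; the only genuinely new case is composing $f \colon c' \to c$ in $\mathcal{C}$, then $x \in u(c,d)$, then $g \colon d \to d'$ in $\mathcal{D}$, for which associativity $(g \circ x) \circ f = g \circ (x \circ f)$ follows from functoriality of $u$ on the product category $\mathcal{C}^{\mathrm{op}} \times \mathcal{D}$, i.e.\ from $u(f, 1_d) \circ u(1_c, g) = u(1_{c'}, g) \circ u(f, 1_d) = u(f,g)$, and unitality from $u(1_c, 1_d) = 1_{u(c,d)}$. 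A longer associativity instance with two morphisms on one side, say $f' \colon c'' \to c'$ and $f \colon c' \to c$, reduces to the functoriality identity $u(f' f, 1_d) = u(f', 1_d) \circ u(f, 1_d)$. Dually, one checks $\bar{U}$ is a functor: $\bar{U}(1_c, 1_d) = \mathrm{id}$ because $x \mapsto 1_d \circ x \circ 1_c = x$ in $U$, and $\bar{U}$ respects composition in $\mathcal{C}^{\mathrm{op}} \times \mathcal{D}$ because composition in $U$ is associative.

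Next I would check the two round trips. Starting from a profunctor $u$, form $\mathrm{col}(u)$, then $\overline{\mathrm{col}(u)}$; on objects $(c,d)$ this is $\mathrm{col}(u)(c,d) = u(c,d)$ by definition, and on a morphism $(f,g)$ it sends $x$ to $g \circ x \circ f$ computed in $\mathrm{col}(u)$, which by the definition of composition there is exactly $u(f,g)(x)$. So $\overline{\mathrm{col}(u)} = u$ on the nose. Conversely, starting from a collage $U$, form $\bar{U}$, then $\mathrm{col}(\bar{U})$: by clause (i) the object sets agree; the hom-set from $c$ to $d$ in $\mathrm{col}(\bar{U})$ is $\bar{U}(c,d) = U(c,d)$; the hom-sets among objects of $\mathcal{C}$ (resp.\ $\mathcal{D}$) and the empty hom-sets $U(d,c) = \emptyset$ are untouched since neither construction modifies them, using clauses (ii) and (iii). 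It remains to see that composition in $\mathrm{col}(\bar{U})$ coincides with composition in $U$: the only nontrivial composites are $g \circ x \circ f$ with $x$ a ``new'' morphism, and in $\mathrm{col}(\bar{U})$ this equals $\bar{U}(f,g)(x) = g \circ x \circ f$ computed in $U$ by the definition of $\bar{U}$ on morphisms; composites of old morphisms with old morphisms, and of a new morphism with an old one on a single side, are handled by the same formula with an identity plugged in. Hence $\mathrm{col}(\bar{U}) = U$.

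Finally I would note that these bijections are natural/compatible with the evident morphisms on each side (morphisms of profunctors versus functors between collages fixing $\mathcal{C}$ and $\mathcal{D}$), if such a strengthening is wanted, though the statement as written only claims a bijection of collections so I would keep that remark brief. The main obstacle, such as it is, is purely bookkeeping: organizing the case analysis for associativity in $\mathrm{col}(u)$ and for the equality $\mathrm{col}(\bar{U}) = U$ so that one does not have to separately verify a combinatorial explosion of cases — the clean way is to observe that any composable string in a collage contains \emph{at most one} ``crossing'' morphism from $\mathcal{C}$ to $\mathcal{D}$ (since there are no morphisms back from $\mathcal{D}$ to $\mathcal{C}$), so every composite is of the form $g_k \cdots g_1 \circ x \circ f_1 \cdots f_\ell$ with the $f_i$ in $\mathcal{C}$, the $g_j$ in $\mathcal{D}$, and $x$ either a crossing morphism or (degenerately) an identity, and this normal form makes both the associativity check and the round-trip identity transparent.
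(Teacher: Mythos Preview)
Your proposal is correct and is precisely the direct verification the paper has in mind; the paper itself gives no proof beyond remarking that ``the proof of the proposition is obvious at this point but it is its statement we are interested in.'' Your write-up simply spells out the bookkeeping that the paper elides, and your normal-form observation (at most one crossing morphism in any composable string) is a clean way to organize it.
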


The proof of the proposition is obvious at this point but it is its statement we are interested in. We have another interpretation of profunctors: they are simply collages. While our previous interpretation was algebraic, we may arguably consider the latter to be \textit{topological}. 

There is a third way to look at profunctors: observe that a collage $U$ between $\mathcal{C}$ and $\mathcal{D}$ is simply given by a functor 
$$p: U \rightarrow \Delta^1$$
with $p^{-1}(0) = \mathcal{C}$ and $p^{-1}(1) = \mathcal{D}$. We elegantly pack the conditions in the definition of collages and we may label this perspective \textit{combinatorial}. 

Profunctors are found a lot in nature. For example given a functor $F: \mathcal{C} \rightarrow \mathcal{D}$ there is a distinguished profunctor associated to $F$, denoted $F^* : \mathcal{C}^{\text{op}} \times \mathcal{D} \rightarrow \text{\textbf{Set}}$, given by $$F^*(c,d) = \mathcal{D}(Fc,d)$$
This profunctor is called the companion of $F$. If we regard the functor $F$ as a diagram
$$F : \Delta^1 \rightarrow \text{\textbf{Cat}}$$
where $\Delta^1$ is the interval category, then we have (by definition)
$$\text{\textbf{Gro}}(F) = \text{col} (F^*)$$
This observation will become more interesting once we have interpreted the companion and the collage construction formally through double category theory.

\subsubsection{Tensoring as formal composition}

Let $\mathcal{C}$, $\mathcal{D}$, $\mathcal{E}$ be categories and $u: \mathcal{C}^{\text{op}} \times \mathcal{D} \rightarrow \text{\textbf{Set}}$, $v: \mathcal{D}^{\text{op}} \times \mathcal{E} \rightarrow \text{\textbf{Set}}$ be profunctors. Since profunctors are bimodules we should be able to tensor $u$ and $v$ over $\mathcal{D}$ and produce a profunctor 
$$v \otimes_\mathcal{D} u : \mathcal{C}^{\text{op}} \times \mathcal{E} \rightarrow \text{\textbf{Set}}$$  
For a pair of objects $c\in \mathcal{C}$ and $e \in \mathcal{E}$ we define the set $(v \otimes_\mathcal{D} u)(c,e)$ to be given as a coequalizer
$$\coprod_{f: d \rightarrow d^\prime \in \mathcal{D}} v(d^\prime,e)  \times u(c,d) \rightrightarrows \coprod_{d \in \mathcal{D}} v(d,e) \times u(c,d) \dashrightarrow (v \otimes_\mathcal{D} u) (c,e) $$
The first coproduct is indexed over all morphisms in $\mathcal{D}$ and the second over all objects of $\mathcal{D}$. The two parallel morphisms forming the above fork are defined, for a fixed $f : d \rightarrow d^\prime$, via $(y,x) \mapsto (y, fx)$ and $(y,x) \mapsto (yf, x)$ for $y \in v(d^\prime,e)$ and $x \in u(c,d)$.

This definition has the virtue that it applies to enriched categories as well and recovers the tensor product of bimodules over rings as a special case (see \cite{riehl2009weighted}). 

But profunctors are collages as well and their tensor product has a meaningful interpretation from this perspective too.
Consider the collages of $u$ and $v$ and juxtapose them along $\mathcal{D}$. 
\begin{center}
	\begin{tikzpicture}
	
	\draw[thick]  (-4,3.5) node (v5) {} ellipse (0.5 and 1.5);
	\draw[thick]  (-2,3.5) node (v6) {} ellipse (0.5 and 1.5);
	\draw[thick]  (0,3.5) node (v13) {} ellipse (0.5 and 1.5);
	
	\node (v1) at (-4,4.5) {};
	\node (v2) at (-2,4.5) {};
	\node (v3) at (-4,4) {};
	\node (v4) at (-2,4) {};
	\node (v7) at (-4,3) {};
	\node (v8) at (-2,3) {};
	\node (v9) at (-4,2.5) {};
	\node (v10) at (-2,2.5) {};
	\draw[->, dashed]  (v1) edge (v2);
	\draw[->, dashed]  (v3) edge (v4);
	\draw[->, dashed]  (v5) edge (v6);
	\draw[->, dashed]  (v7) edge (v8);
	\draw[->, dashed]  (v9) edge (v10);
	\node at (-4,1.5) {$\mathcal{C}$};
	\node at (-2,1.5) {$\mathcal{D}$};
	
	\node (v11) at (0,4.5) {};
	\node (v12) at (0,4) {};
	\node (v14) at (0,3) {};
	\node (v15) at (0,2.5) {};
	\draw[->, dashed]   (v2) edge (v11);
	\draw[->, dashed]   (v4) edge (v12);
	\draw[->, dashed]   (v6) edge (v13);
	\draw[->, dashed]   (v8) edge (v14);
	\draw[->, dashed]   (v10) edge (v15);
	\node at (0,1.5) {$\mathcal{E}$};
	
	\node at (-3,5) {$u$};
	\node at (-1,5) {$v$};
	\end{tikzpicture}
\end{center} 
What results is not a category because we do not have composites of morphisms in $u(c,d)$ with those in $v(d,e)$. What we do in this case is freely generate a category out of the given data. That is, we add a formal composite $y \otimes x$ for all $x \in u(c,d)$ and $y \in v(d,e)$. Since we want our formal composites to be coherent with existing compositions, for all $f : d \rightarrow d^\prime$, $x \in u(c,d)$ and $y \in v(d^\prime,e)$ we impose the relation 
$$y \otimes (fx) = (yf) \otimes x$$
We obtain a category that looks something like this
\begin{center}
	\begin{tikzpicture}
	
	\draw[thick]  (-4,3.5) node (v5) {} ellipse (0.5 and 1.5);
	\draw[thick]  (-2,7.5) node (v6) {} ellipse (0.5 and 1.5);
	\draw[thick]  (0,3.5) node (v13) {} ellipse (0.5 and 1.5);
	
	\node (v1) at (-4,4.5) {};
	\node (v2) at (-2,8.5) {};
	\node (v3) at (-4,4) {};
	\node (v4) at (-2,8) {};
	\node (v7) at (-4,3) {};
	\node (v8) at (-2,7) {};
	\node (v9) at (-4,2.5) {};
	\node (v10) at (-2,6.5) {};
	\draw[->]  (v1) edge node[left]{$u$} (v2);
	\draw[->]  (v3) edge (v4);
	\draw[->]  (v5) edge (v6);
	\draw[->]  (v7) edge (v8);
	\draw[->]  (v9) edge (v10);
	\node at (-4,1.5) {$\mathcal{C}$};
	\node at (-2,5.5) {$\mathcal{D}$};
	
	\node (v11) at (0,4.5) {};
	\node (v12) at (0,4) {};
	\node (v14) at (0,3) {};
	\node (v15) at (0,2.5) {};
	\draw[->]   (v2) edge node[right]{$v$} (v11);
	\draw[->]   (v4) edge (v12);
	\draw[->]   (v6) edge (v13);
	\draw[->]   (v8) edge (v14);
	\draw[->]   (v10) edge (v15);
	\node at (0,1.5) {$\mathcal{E}$};
	
	\draw[->, dashed]   (v1) edge  (v11);
	\draw[->, dashed]   (v3) edge (v12);
	\draw[->, dashed]   (v5) edge (v13);
	\draw[->, dashed]   (v7) edge (v14);
	\draw[->, dashed]   (v9) edge node[below]{$v \otimes_\mathcal{D} u$} (v15);
	\end{tikzpicture}
\end{center} 
Then we discard the objects of $\mathcal{D}$ to obtain a collage between $\mathcal{C}$ and $\mathcal{E}$ which corresponds precisely to the tensor $v \otimes_\mathcal{D} u$.

It is easy to see how this coincides with what the above coequalizer presents. We trust the reader to make the latter interpretation more precise (see for example reflexive graphs with relations in \cite[p.~142]{riehl2017category}). What we intend to draw attention to is the similarity between the above picture and the one we used to illustrate the homotopy colimit of a commutative triangle of spaces (Figure \ref{hocolim}). 

If we look at profunctors as functors to $\Delta^1$ then the above process can be described as follows: 
\begin{enumerate}
	\item Forst consider the commutative diagram which describes a transformation of pushout diagrams
	\begin{center}
		\begin{tikzpicture}

		\node (v4) at (-3.5,3.5) {$\text{col}(u)$};
		\node (v3) at (-2,3.5) {$\mathcal{D}$};
		\node (v5) at (-0.5,3.5) {$\text{col}(v)$};
		\node (v7) at (-3.5,2) {$\Delta^1$};
		\node (v6) at (-2,2) {$\Delta^0$};
		\node (v8) at (-0.5,2) {$\Delta^1$};

		\draw[left hook->]  (v3) edge  (v4);
		\draw[right hook->]  (v3) edge (v5);
		\draw[left hook->]  (v6) edge node[above, font = \scriptsize] {$d^1$} (v7);
		\draw[right hook->]  (v6) edge node[above, font = \scriptsize] {$d^0$} (v8);
		\draw[->]  (v4) edge (v7);
		\draw[->]  (v5) edge (v8);

		\draw[->]  (v3) edge (v6);
		\end{tikzpicture}
	\end{center}
	where the maps in the top row are the obvious inclusions.
	
	\item Take the pushout of both rows to obtain an induced functor
	$$p: \text{col}(u) \coprod_\mathcal{D} \text{col}(v) \rightarrow \Delta^2$$
	
	\item Let the collage of $v \otimes_\mathcal{D} u$ be given by pulling back along the first face of $\Delta^2$
	\begin{center}
		\begin{tikzpicture}
		
		\node (v4) at (-2.5,3) {$\text{col}(v \otimes_\mathcal{D} u)$};
		\node (v3) at (-2.5,1) {$\Delta^1$};
		\node (v1) at (1,3) {$\text{col}(u) \coprod_\mathcal{D} \text{col}(v)$};
		\node (v2) at (1,1) {$\Delta^2$};
		\draw[->]  (v1) edge node[right] {$p$} (v2);
		\draw[right hook ->]  (v3) edge node[above] {$d^1$} (v2);
		\draw[->, dashed]  (v4) edge (v1);
		\draw[->, dashed]  (v4) edge (v3);
		\end{tikzpicture}
	\end{center}
\end{enumerate}

\begin{remark}
	We are taking huge advantage of the fact that in the category of categories the pushout of $$\Delta^1 \xleftarrow{d^1} \Delta^0 \xrightarrow{d^0} \Delta^1$$
	gives us $\Delta^2$. All of the above work for the category of simplicial sets except this pushout would be the horn $\Lambda^2_1$ and hence we cannot define the tensor product of collages between simplicial sets. This happens because we create the category of simplicial sets as a free cocompletion of $\Delta$ and while we add new formal colimits we destroy the ones already present in $\Delta$.
\end{remark}

\subsection{Double categories and equipments}
\label{dblcat}

We may consider profunctors $\mathcal{C}^{\text{op}} \times \mathcal{D} \rightarrow \text{\textbf{Set}}$ as some type of morphism $\mathcal{C} \rightarrow \mathcal{D}$ with composition given by tensoring. However we do not obtain a category because composition is not strictly associative and unital but only so up to invertible natural transformation. This way categories, profunctors and transformations between them are organized in a (weak) 2-category (see \cite{leinster2004higher}). 

On the other hand we have functors as morphisms between categories. Categories, functors and natural transformations are themselves organized in a 2-category. Functors and profunctors are not unrelated objects. We saw for example that we have the companion profunctor $F^*$ associated to a functor $F$ and the collage of $F^*$ gives us the mapping cylinder of $F$. 

Double categories are a natural organizing principle for such situations. Their category theory is studied systematically in the the series of papers \cite{grandis1999limits,grandis2004adjoint,grandis2008kan} (which we follow). In particular we are interested in double colimits. 

The double category of functors and profunctors has nice extra properties and constitutes the paradigmatic example of an equipment. We will follow \cite{shulman2008framed} and \cite{shulman2009equip} for the theory of equipments.  
Finally we will observe that the Grothendieck construction for categories can be recovered from  this equipment as a double colimit. This in turn will serve as a paradigm for the rest of this work.

\subsubsection{Double categories} 

(Strict) double categories are category objects in the category of categories (i.e. internal categories). We will not need this perspective here since most examples are weak and it is not illuminating to think of double categories as internal objects. 
In what follows we pack definition, notation and pictures. We insist in depicting categorical structures as it enables us to recognize them when they are in front of us.

A double category $\mathbb{D}$ consists of:

\begin{itemize}
	\renewcommand\labelitemi{--}
	\item Objects $a,b,c...$ which form a class we denote $ob\mathbb{D}$. We may abuse and simply write $a \in \mathbb{D}$
	\item A category $\mathbb{D}_0$ with objects $ob\mathbb{D}$ called the \textit{vertical category} of $\mathbb{D}$. The morphisms in $\mathbb{D}_0$ are called \textit{vertical morphisms}
	\item For any two objects $a,b \in \mathbb{D}$ a collection $\mathbb{D}_h(a,b)$ whose elements are called \textit{horizontal morphisms} 
	\item 2-cells of the form 
	\begin{center}
		\begin{tikzpicture}
		
		\node (v1) at (-2.8,3.2) {$a$};
		\node (v3) at (-1.2,3.2) {$b$};
		\node (v2) at (-2.8,1.8) {$c$};
		\node (v4) at (-1.2,1.8) {$d$};
		\draw[->]  (v1) edge node[left, font= \scriptsize]{$f$} (v2);
		\draw[->]  (v1) edge node[above, font= \scriptsize]{$u$} (v3);
		\draw[->]  (v3) edge node[right, font= \scriptsize]{$g$} (v4);
		\draw[->]  (v2) edge node[below, font= \scriptsize]{$v$} (v4);
		
		\node at (-2,2.5) {$\alpha$};
		\end{tikzpicture}
	\end{center}
	where $f$,$g$ are vertical morphisms and $u$, $v$ are horizontal morphisms.  $f$ and $g$ are the vertical domain and codomain of the 2-cell $\alpha$, $u$ and $v$ are the horizontal domain and codomain of $\alpha$.
	
	We denote $\alpha : u \Rightarrow v$ a 2-cell $\alpha$  of the form
	\begin{center}
		\begin{tikzpicture}
		
		\node (v1) at (-2.8,3.2) {$a$};
		\node (v3) at (-1.2,3.2) {$b$};
		\node (v2) at (-2.8,1.8) {$a$};
		\node (v4) at (-1.2,1.8) {$b$};
		\draw[->]  (v1) edge node[left, font= \scriptsize]{$1_a$} (v2);
		\draw[->]  (v1) edge node[above, font= \scriptsize]{$u$} (v3);
		\draw[->]  (v3) edge node[right, font= \scriptsize]{$1_b$} (v4);
		\draw[->]  (v2) edge node[below, font= \scriptsize]{$v$} (v4);
		
		\node at (-2,2.5) {$\alpha$};
		\end{tikzpicture}
	\end{center}
	\item For all $u \in \mathbb{D}_h(a,b)$ a 2-cell
	\begin{center}
		\begin{tikzpicture}
		
		\node (v1) at (-2.8,3.2) {$a$};
		\node (v3) at (-1.2,3.2) {$b$};
		\node (v2) at (-2.8,1.8) {$a$};
		\node (v4) at (-1.2,1.8) {$b$};
		\draw[->]  (v1) edge node[left, font= \scriptsize]{$1_a$} (v2);
		\draw[->]  (v1) edge node[above, font= \scriptsize]{$u$} (v3);
		\draw[->]  (v3) edge node[right, font= \scriptsize]{$1_b$} (v4);
		\draw[->]  (v2) edge node[below, font= \scriptsize]{$u$} (v4);
		
		\node at (-2,2.5) {$1_u$};
		\end{tikzpicture}
	\end{center}
	called the unit 2-cell of $u$
	\item Vertical composition of 2-cells
	
	\begin{center}
		\begin{tikzpicture}
			\node (v1) at (-2.8,3.2) {$a$};
			\node (v3) at (-1.2,3.2) {$b$};
			\node (v2) at (-2.8,1.8) {$a^\prime$};
			\node (v4) at (-1.2,1.8) {$b^\prime$};
			\draw[->]  (v1) edge node[left, font= \scriptsize]{$f$} (v2);
			\draw[->]  (v1) edge node[above, font= \scriptsize]{$u$} (v3);
			\draw[->]  (v3) edge node[right, font= \scriptsize]{$g$} (v4);
			\draw[->]  (v2) edge node[above, font= \scriptsize]{$v$} (v4);
			\node at (-2,2.5) {$\alpha$};
			
			\node (v5) at (-2.8,0.4) {$a^{\prime\prime}$};
			\node (v6) at (-1.2,0.4) {$b^{\prime\prime}$};
			\node at (-2,1) {$\beta$};
			\draw[->]  (v2) edge node[left, font= \scriptsize]{$f^\prime$} (v5);
			\draw[->]  (v4) edge node[right, font= \scriptsize]{$g^\prime$} (v6);
			\draw[->]  (v5) edge node[below, font= \scriptsize]{$w$} (v6);
			
			\node (v7) at (-0.4,1.8) {};
			\node (v8) at (0.8,1.8) {};
			\draw[|->]  (v7) edge node[above, font= \scriptsize]{compose} (v8);
			
			\node (v9) at (1.4,2.5) {$a$};
			\node (v10) at (1.4,1.1) {$a^{\prime\prime}$};
			\node (v11) at (2.8,2.5) {$b$};
			\node (v12) at (2.8,1.1) {$b^{\prime\prime}$};
			\draw[->]  (v9) edge node[left, font= \scriptsize]{$f^\prime f$} (v10);
			\draw[->]  (v9) edge node[above, font= \scriptsize]{$u$} (v11);
			\draw[->]  (v11) edge node[right, font= \scriptsize]{$g\prime g$} (v12);
			\draw[->]  (v10) edge node[below, font= \scriptsize]{$w$} (v12);
			\node[->] at (2.1,1.8) {$\beta\alpha$};
		\end{tikzpicture}
	\end{center}
	which is unital and associative. We might also write $\beta \circ \alpha$ for this composition.
	\item For any three objects $a,b,c \in \mathbb{D}$ a composition  operation $\otimes$ assigning to a pair $u \in \mathbb{D}_h(a,b)$, $v \in \mathbb{D}_h(b,c)$ a horizontal morphism $v \otimes u \in \mathbb{D}_h(a,c)$. 
	\item For each object $a$ a horizontal morphism $1_a^* \in \mathbb{D}_h(a,a)$ called the \textit{horizontal unit} of $a$ which we draw as 
	\begin{center}
		\begin{tikzpicture}
		\node (v1) at (-4,3.6) {$a$};
		\node (v2) at (-2.8,3.6) {$a$};
		\draw  (v1) edge[double] (v2);
		\end{tikzpicture}
	\end{center}
	\item Horizontal conposition of 2-cells
	\begin{center}
		\begin{tikzpicture}

		\node (v1) at (-4.2,4.2) {$a$};
		\node (v2) at (-4.2,2.8) {$a^\prime$};
		\node (v3) at (-2.8,4.2) {$b$};
		\node (v4) at (-2.8,2.8) {$b^\prime$};
		\node (v5) at (-1.4,4.2) {$c$};
		\node (v6) at (-1.4,2.8) {$c^\prime$};
		\node (v7) at (-0.7969,3.5059) {};
		\node (v8) at (0.6315,3.5059) {};
		\node (v9) at (1.2,4.2) {$a$};
		\node (v10) at (1.2,2.8) {$b$};
		\node (v11) at (2.6,4.2) {$c$};
		\node (v12) at (2.6,2.8) {$c^\prime$};
		\node at (-3.4731,3.4731) {$\alpha$};
		\node at (-2.0447,3.4895) {$\beta$};
		\node at (1.9614,3.4731) {$\beta * \alpha$};
		\draw[->]  (v1) edge node[left, font = \scriptsize]{$f$} (v2);
		\draw[->]  (v1) edge node[above, font = \scriptsize]{$u$} (v3);
		\draw[->]  (v3) edge node[left, font = \scriptsize]{$g$} (v4);
		\draw[->]  (v2) edge node[below, font = \scriptsize]{$u^\prime$} (v4);
		\draw[->]  (v3) edge node[above, font = \scriptsize]{$v$} (v5);
		\draw[->]  (v5) edge node[right, font = \scriptsize]{$h$} (v6);
		\draw[->]  (v4) edge node[below, font = \scriptsize]{$v^\prime$} (v6);
		\draw[|->]  (v7) edge node[above, font = \scriptsize]{compose}(v8);
		\draw[->]  (v9) edge node[left, font = \scriptsize]{$f$} (v10);
		\draw[->]  (v9) edge node[above, font = \scriptsize]{$v \otimes u$} (v11);
		\draw[->]  (v11) edge node[right, font = \scriptsize]{$h$} (v12);
		\draw[->]  (v10) edge node[below, font = \scriptsize]{$v^\prime \otimes u^\prime$} (v12);
		\end{tikzpicture}
	\end{center}
\end{itemize}
	 
	 This definition would end here if $\otimes$ was associative and unital, but as our example suggests we must account for weak associativity and unital laws. So we postulate unitors and associators for the horizontal direction of $\mathbb{D}$.
	 \begin{itemize}
	 \renewcommand\labelitemi{--}
	
	\item For each horizontal morphism $u \in \mathbb{D}_h(a,b)$ invertible 2-cells $r_u: 1_b^* \otimes u \rightarrow u$ and $l_u : u \otimes 1_a^* \rightarrow u$ called the \textit{right} and \textit{left unitor} respectively. We may depict them as 
	
	\begin{center}
		\begin{tikzpicture}
	\node (v1) at (-4,3.6) {$a$};
	\node (v2) at (-2.8,3.6) {$b$};
	\draw[->]  (v1) edge node[above, font= \scriptsize]{$u$} (v2);
	\node (v3) at (-1.6,3.6) {$b$};
	\node (v4) at (-4,2.4) {$a$};
	\node (v5) at (-1.6,2.4) {$b$};
	\draw  (v2) edge[double] (v3);
	\draw[->]   (v1) edge node[left, font= \scriptsize]{$1_a$} (v4);
	\draw[->]   (v3) edge node[right, font= \scriptsize]{$1_b$} (v5);
	\draw[->]   (v4) edge node[below, font= \scriptsize]{$u$} (v5);
	\node at (-2.7835,2.9641) {$r_u$};
	\end{tikzpicture} \ \ \ \ \ \ \
	\begin{tikzpicture}
	\node (v1) at (-4,3.6) {$a$};
	\node (v2) at (-2.8,3.6) {$a$};
	\draw (v1) edge[double]  (v2);
	\node (v3) at (-1.6,3.6) {$b$};
	\node (v4) at (-4,2.4) {$a$};
	\node (v5) at (-1.6,2.4) {$b$};
	\draw[->]  (v2) edge node[above, font= \scriptsize]{$u$} (v3);
	\draw[->]   (v1) edge node[left, font= \scriptsize]{$1_a$} (v4);
	\draw[->]   (v3) edge node[right, font= \scriptsize]{$1_b$} (v5);
	\draw[->]   (v4) edge node[below, font= \scriptsize]{$u$} (v5);
	\node at (-2.7835,2.9641) {$l_u$};
	\end{tikzpicture}
	\end{center}
	\item For each vertical morphism $f \in \mathbb{D}_v(a,b)$ a 2-cell
	\begin{center}
		\begin{tikzpicture}
		
		\node (v1) at (-2.8,3.2) {$a$};
		\node (v3) at (-1.2,3.2) {$a$};
		\node (v2) at (-2.8,1.8) {$b$};
		\node (v4) at (-1.2,1.8) {$b$};
		\draw[->]  (v1) edge node[left, font= \scriptsize]{$f$} (v2);
		\draw  (v1) edge[double]  (v3);
		\draw[->]  (v3) edge node[right, font= \scriptsize]{$f$} (v4);
		\draw (v2) edge [double] (v4);
		
		\node at (-2,2.5) {$1_u$};
		\end{tikzpicture}
	\end{center}
	called the \textit{unit} of $f$.
	\item For every triple $u,v,w$ of composable horizontal morphisms an invertible 2-cell $a_{u,v,w}: (u \otimes v) \otimes w \rightarrow u \otimes (v \otimes w)$ called \textit{associator}
\end{itemize} 
The above data is subject to coherence axioms analogous to those of weak 2-categories. The axiomas guarantee that we have a (strict) vertical 2-category $\mathbb{D}_v$ with vertical morphisms as 1-morphisms and cells of the form 
\begin{center}
	\begin{tikzpicture}
	
	\node (v1) at (-2.8,3.2) {$a$};
	\node (v3) at (-1.2,3.2) {$a$};
	\node (v2) at (-2.8,1.8) {$b$};
	\node (v4) at (-1.2,1.8) {$b$};
	\draw[->]  (v1) edge node[left, font= \scriptsize]{$f$} (v2);
	\draw  (v1) edge[double]  (v3);
	\draw[->]  (v3) edge node[right, font= \scriptsize]{$g$} (v4);
	\draw (v2) edge [double] (v4);
	
	\node at (-2,2.5) {$\alpha$};
	\end{tikzpicture}
\end{center}
as 2-morphisms. Similarly the axioms guarantee the existenece of a (weak) horizontal 2-category $\mathbb{D}_h$ whose 2-morphisms are cells of the form 
\begin{center}
	\begin{tikzpicture}
	
	\node (v1) at (-2.8,3.2) {$a$};
	\node (v3) at (-1.2,3.2) {$b$};
	\node (v2) at (-2.8,1.8) {$a$};
	\node (v4) at (-1.2,1.8) {$d$};
	\draw[->]  (v1) edge node[left, font= \scriptsize]{$1_a$} (v2);
	\draw[->]  (v1) edge node[above, font= \scriptsize]{$u$} (v3);
	\draw[->]  (v3) edge node[right, font= \scriptsize]{$1_b$} (v4);
	\draw[->]  (v2) edge node[below, font= \scriptsize]{$v$} (v4);
	
	\node at (-2,2.5) {$\alpha$};
	\end{tikzpicture}
\end{center}
We refer the reader to \cite{leinster2004higher} or \cite{leinster1998basic} as we are more interested in the spirit of these structures rather then in the delicate art of defining them.

Examples of double categories are numerous (see \cite{grandis1999limits}, \cite{leinster2004higher} or \cite{shulman2008framed}) because it occurs a lot to have two kinds of morphisms related as above. For example we have homomorphisms between groups or rings and bimodules, functions between sets and relations, continuous maps between manifolds and cobordisms, functors between categories and adjunctions. 

Here we are interested in the example of categories, functors and profunctors. 
	There is a double category $\textbf{\text{Prof}}$ with
	\begin{itemize}
		\item[($\bullet$)] Categories as objects
		\item[($\downarrow$)] The category $\textbf{\text{Cat}}$ of categories and functors as vertical category
		\item[($\rightarrow$)] Profunctors $\mathcal{C}^{\text{op}} \times \mathcal{D} \rightarrow \text{\textbf{Set}}$ as horizontal morphisms
		\item[($=$)] The profunctor $\mathcal{C}(\_,\_): \mathcal{C}^{\text{op}} \times \mathcal{C} \rightarrow \text{\textbf{Set}}$ ,which assigns to a pair of objects $c,d \in \mathcal{C}$ the morphism set $\mathcal{C}(c,d)$, as horizontal unit
		\item[($\otimes$)] Tensor product of profunctors as horizontal composition
		\item[($\square$)] 2-cells 
		\begin{center}
			\begin{tikzpicture}
			
			\node (v1) at (-2.8,3.2) {$\mathcal{C}$};
			\node (v3) at (-1.2,3.2) {$\mathcal{D}$};
			\node (v2) at (-2.8,1.8) {$\mathcal{C}^\prime$};
			\node (v4) at (-1.2,1.8) {$\mathcal{D}^\prime$};
			\draw[->]  (v1) edge node[left, font= \scriptsize]{$F$} (v2);
			\draw[->]  (v1) edge node[above, font= \scriptsize]{$u$} (v3);
			\draw[->]  (v3) edge node[right, font= \scriptsize]{$G$} (v4);
			\draw[->]  (v2) edge node[below, font= \scriptsize]{$v$} (v4);
			
			\node at (-2,2.5) {$\alpha$};
			\end{tikzpicture}
		\end{center}
		given by a family of functions $\alpha_{c,d}: u(c,d) \rightarrow v(Fc,Gd)$ indexed by pairs $c \in \mathcal{C}$, $d \in \mathcal{D}$ which respect the action of the arrows in $\mathcal{C}$ and $\mathcal{D}$ in the obvious way. 
		
		Equivalently, the 2-cell $\alpha$ can be described as a functor $\alpha : \text{col}(u) \rightarrow \text{col}(v)$ between collages which respects $F$ and $G$, meaning $\alpha|_\mathcal{C} = F$ and $\alpha|_\mathcal{D}=G$.
	\end{itemize}

The double category \textbf{Prof} elegantly organizes the 2-category of categories functors and natural transformations as vertical 2-category and the weak 2-category of categories, profunctors and transformations between them as horizontal 2-category. As we will see next, we can read information from \textbf{Prof} which we cannot read from its vertical or horizontal 2-categories alone. 

Another interesting example is the double category of cospans associated to a category $\mathcal{C}$ with pushouts. Recall that a cospan in $\mathcal{C}$ is a diagram of the form $$x_0 \longrightarrow x \longleftarrow x_1$$ in $\mathcal{C}$. We may think of such a cospan as a morphism with source $x_0$ and target $x_1$. Given another cospan $(x_1 \rightarrow y \leftarrow x_2)$ we define their composite by forming the pushout of the top square as indicated in the diagram:
\begin{center}
	\begin{tikzpicture}
	
	\node (v1) at (-4,1.5) {$x_0$};
	\node (v2) at (-3,2.5) {$x$};
	\node (v3) at (-2,1.5) {$x_1$};
	\node (v4) at (-1,2.5) {$y$};
	\node (v5) at (0,1.5) {$x_2$};
	\node (v6) at (-2,3.5) {$z$};
	\draw[->]  (v1) edge (v2);
	\draw[->]  (v3) edge (v2);
	\draw[->]  (v3) edge (v4);
	\draw[->]  (v5) edge (v4);
	\draw[->, dashed]  (v2) edge (v6);
	\draw[->, dashed]  (v4) edge (v6);
	\draw[->, bend left]  (v1) edge (v6);
	\draw[->, bend right]  (v5) edge (v6);
	\end{tikzpicture}
\end{center}

This way we may form a double category $\text{\textbf{coSpan}}(\mathcal{C})$ with
\begin{itemize}
	\renewcommand\labelitemi{--}
	\item objects those of $\mathcal{C}$ and vertical category $\mathcal{C}$
	\item cospans in as horizontal morphisms, with composition as prescribed above
	\item 2-cells diagrams of the form 
	\begin{center}
		\begin{tikzpicture}

		\node (v1) at (-4.5,5) {$x_0$};
		\node (v3) at (-3,5) {$x$};
		\node (v4) at (-1.5,5) {$x_1$};
		\node (v2) at (-4.5,3.5) {$y_0$};
		\node (v5) at (-3,3.5) {$y$};
		\node (v6) at (-1.5,3.5) {$y_1$};
		\draw[->]  (v1) edge (v2);
		\draw[->]  (v3) edge (v1);
		\draw[->]  (v3) edge (v4);
		\draw[->]  (v3) edge (v5);
		\draw[->]  (v4) edge (v6);
		\draw[->]  (v5) edge (v2);
		\draw[->]  (v5) edge (v6);
		\end{tikzpicture}
	\end{center}
	with the obvious composition induced by taking pushouts
\end{itemize}

\subsubsection{Equipments, companions and cojoints}

The notion of equipment has been discussed in a few variants. In this work an \textit{equipment} is a double category $\mathbb{D}$ satisfying the following universal filling condition: given a niche 
\begin{center}
	\begin{tikzpicture}
	
	\node (v1) at (-2.8,3.2) {$a$};
	\node (v3) at (-1.2,3.2) {$b$};
	\node (v2) at (-2.8,1.8) {$c$};
	\node (v4) at (-1.2,1.8) {$d$};
	\draw[->]  (v1) edge node[left, font= \scriptsize]{$f$} (v2);
	\draw[->]  (v3) edge node[right, font= \scriptsize]{$g$} (v4);
	\draw[->]  (v2) edge node[below, font= \scriptsize]{$u$} (v4);
	
	\end{tikzpicture}
\end{center}
there is a horizontal morphism $u^{f,g}: a \rightarrow b$ and a 2-cell
\begin{center}
	\begin{tikzpicture}
	
	\node (v1) at (-2.8,3.2) {$a$};
	\node (v3) at (-1.2,3.2) {$b$};
	\node (v2) at (-2.8,1.8) {$c$};
	\node (v4) at (-1.2,1.8) {$d$};
	\draw[->]  (v1) edge node[left, font= \scriptsize]{$f$} (v2);
	\draw[->, dashed]  (v1) edge node[above, font= \scriptsize]{$u^{f,g}$} (v3);
	\draw[->]  (v3) edge node[right, font= \scriptsize]{$g$} (v4);
	\draw[->]  (v2) edge node[below, font= \scriptsize]{$u$} (v4);
	
	\node at (-2,2.5) {$\exists \phi_u$};
	\end{tikzpicture}
\end{center}
which is universal, meaning every 2-cell $\psi$ with target $u$ whose horizontal domain and codomain factor through $f$ and $g$

\begin{center}
	\begin{tikzpicture}
				\node (v1) at (-4.2,4) {$a^\prime$};
				\node (v4) at (-2.8,4) {$b^\prime$};
				\node (v2) at (-4.2,2.8) {$a$};
				\node (v5) at (-2.8,2.8) {$b$};
				\node (v3) at (-4.2,1.6) {$c$};
				\node (v6) at (-2.8,1.6) {$d$};
				
				\node at (-3.4895,2.7999) {$\psi$};
				
		\draw[->]  (v1) edge node[left, font= \scriptsize]{$f^\prime$} (v2);
		\draw[->]  (v2) edge node[left, font= \scriptsize]{$f$} (v3);
		\draw[->]  (v1) edge node[above, font= \scriptsize]{$v$} (v4);
		\draw[->]  (v4) edge node[right, font= \scriptsize]{$g^\prime$} (v5);
		\draw[->]  (v5) edge node[right, font= \scriptsize]{$g$} (v6);
		\draw[->]  (v3) edge node[below, font= \scriptsize]{$u$} (v6);
	\end{tikzpicture}
\end{center}
 factors uniquely through $\phi_u$, meaning $\psi = \phi_u \psi^\prime$ for a unique $\psi^\prime$
\begin{center}
	\begin{tikzpicture}
	
	\node (v1) at (-4.2,4) {$a^\prime$};
	\node (v4) at (-2.8,4) {$b^\prime$};
	\node (v2) at (-4.2,2.8) {$a$};
	\node (v5) at (-2.8,2.8) {$b$};
	\node (v3) at (-4.2,1.6) {$c$};
	\node (v6) at (-2.8,1.6) {$d$};
	
	\node at (-2.2969,2.7999) {$=$};
	
	\node (v7) at (-1.8,4) {$a^\prime$};
	\node (v10) at (-0.4,4) {$b^\prime$};
	\node (v8) at (-1.8,2.8) {$a$};
	\node (v11) at (-0.4,2.8) {$b$};
	\node (v9) at (-1.8,1.6) {$c$};
	\node (v12) at (-0.4,1.6) {$d$};
	\node at (-3.4895,2.7999) {$\psi$};
	\node at (-1.076,2.1596) {$\phi_u$};
	\draw[->]  (v1) edge node[left, font= \scriptsize]{$f^\prime$} (v2);
	\draw[->]  (v2) edge node[left, font= \scriptsize]{$f$} (v3);
	\draw[->]  (v1) edge node[above, font= \scriptsize]{$v$} (v4);
	\draw[->]  (v4) edge node[right, font= \scriptsize]{$g^\prime$} (v5);
	\draw[->]  (v5) edge node[right, font= \scriptsize]{$g$} (v6);
	\draw[->]  (v3) edge node[below, font= \scriptsize]{$u$} (v6);
	\draw[->]  (v7) edge node[left, font= \scriptsize]{$f^\prime$} (v8);
	\draw[->]  (v8) edge node[left, font= \scriptsize]{$f$} (v9);
	\draw[->]  (v7) edge node[above, font= \scriptsize]{$v$} (v10);
	\draw[->]  (v10) edge node[right, font= \scriptsize]{$g^\prime$} (v11);
	\draw[->]  (v11) edge node[right, font= \scriptsize]{$g$} (v12);
	\draw[->]  (v8) edge node[above, font= \scriptsize]{$u^{f,g}$} (v11);
	\draw[->]  (v9) edge node[below, font= \scriptsize]{$u$} (v12);
	
	\node at (-1.076,3.4074) {$\exists ! \psi^\prime$};
	\end{tikzpicture}
\end{center}

\begin{proposition}
	The double category \textbf{Prof} is an equipment.
\end{proposition}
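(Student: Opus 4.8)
The plan is to exhibit, for each niche, the evident \emph{restriction} profunctor together with an essentially identity comparison 2-cell, and then to verify universality by unwinding definitions; no serious computation is involved. Given a niche consisting of functors $f : \mathcal{A} \to \mathcal{C}$ and $g : \mathcal{B} \to \mathcal{D}$ and a profunctor $u : \mathcal{C}^{\mathrm{op}} \times \mathcal{D} \to \textbf{Set}$, I would set $u^{f,g}(a,b) = u(fa, gb)$, with the action on the arrows of $\mathcal{A}$ and $\mathcal{B}$ induced from that of $u$ via the functoriality of $f$ and $g$. In collage terms, $\mathrm{col}(u^{f,g})$ is obtained by gluing $\mathcal{A}$ and $\mathcal{B}$ along the cross hom-sets $U(f-,g-)$, where $U = \mathrm{col}(u)$, with composition inherited from $U$. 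The comparison 2-cell $\phi_u : u^{f,g} \Rightarrow u$ is then the family of identity functions $u^{f,g}(a,b) = u(fa,gb) \to u(fa,gb)$, whose compatibility with the arrows of $\mathcal{A}$ and $\mathcal{B}$ is immediate from the definition of the action of $u^{f,g}$; equivalently, $\phi_u$ is the functor $\mathrm{col}(u^{f,g}) \to \mathrm{col}(u)$ acting as $f$ on $\mathcal{A}$, as $g$ on $\mathcal{B}$, and as the identity on cross-morphisms. As a sanity check, taking $u = \mathcal{D}(-,-)$ and $g = 1_{\mathcal{D}}$ recovers the companion $F^*$ of a functor $F : \mathcal{A} \to \mathcal{D}$ as the restriction $\mathcal{D}(-,-)^{F, 1_{\mathcal{D}}}$.

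For universality I would unwind what a 2-cell $\psi$ with target $u$ whose horizontal domain and codomain factor through $f$ and $g$ amounts to: if its horizontal domain is $v : (\mathcal{A}')^{\mathrm{op}} \times \mathcal{B}' \to \textbf{Set}$ and its vertical boundaries are $f \circ f'$ and $g \circ g'$ for functors $f' : \mathcal{A}' \to \mathcal{A}$ and $g' : \mathcal{B}' \to \mathcal{B}$, then $\psi$ is exactly a family of functions $\psi_{a,b} : v(a,b) \to u(f f' a, g g' b)$ compatible with the arrows of $\mathcal{A}'$ and $\mathcal{B}'$. Since $u(ff'a, gg'b) = u^{f,g}(f'a, g'b)$ and the compatibility condition for $\psi$ is literally the one for such a family to be a 2-cell $v \Rightarrow u^{f,g}$ with vertical boundaries $f'$ and $g'$, this family is itself a 2-cell $\psi' : v \Rightarrow u^{f,g}$; and because $\phi_u$ is pointwise the identity, vertical composition with it changes nothing, so $\phi_u \psi' = \psi$ and $\psi'$ is the unique such 2-cell. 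Hence $\phi_u$ is universal and $\textbf{Prof}$ is an equipment.

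I do not anticipate a real obstacle; the only point needing care is the bookkeeping of variances and of the definition of vertical composition of 2-cells in $\textbf{Prof}$, so that the assertion ``$\psi$ already is $\psi'$, merely reinterpreted'' is genuinely justified rather than just plausible. In the collage formulation the same step reads: any functor from a collage $W$ between $\mathcal{A}'$ and $\mathcal{B}'$ into $\mathrm{col}(u)$ that restricts to $f f'$ and $g g'$ on its two pieces factors uniquely through $\mathrm{col}(u^{f,g})$, its values on objects, on $\mathcal{A}'$- and $\mathcal{B}'$-morphisms, and on cross-morphisms all being forced, with functoriality of the factorization inherited from that of the original functor.
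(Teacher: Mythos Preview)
Your proof is correct and follows essentially the same approach as the paper: define $u^{f,g}(a,b) = u(fa,gb)$ with identity comparison 2-cell, then verify universality by unwinding definitions. You actually carry out the universality check that the paper leaves as an exercise for the reader, and your added collage interpretation is a nice complement, but the core argument is identical.
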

\begin{proof}
	Let $\mathcal{A}, \mathcal{B}, \mathcal{C}, \mathcal{D}$ be categories, $F: \mathcal{A} \rightarrow \mathcal{C}$ and $G: \mathcal{B} \rightarrow \mathcal{D}$ be functors and $u: \mathcal{C}^{\text{op}} \times \mathcal{D} \rightarrow \text{\textbf{Set}}$ be a profunctor so that we have a niche
	\begin{center}
		\begin{tikzpicture}
		
		\node (v1) at (-2.8,3.2) {$\mathcal{A}$};
		\node (v3) at (-1.2,3.2) {$\mathcal{B}$};
		\node (v2) at (-2.8,1.8) {$\mathcal{C}$};
		\node (v4) at (-1.2,1.8) {$\mathcal{D}$};
		\draw[->]  (v1) edge node[left, font= \scriptsize]{$F$} (v2);
		\draw[->]  (v3) edge node[right, font= \scriptsize]{$G$} (v4);
		\draw[->]  (v2) edge node[below, font= \scriptsize]{$u$} (v4);
		
		\end{tikzpicture}
	\end{center}
	Define the profunctor $u^{F,G}: \mathcal{A}^{\text{op}} \times \mathcal{B} \rightarrow \text{\textbf{Set}}$ via $$u^{F,G}(a,b)= u(Fa,Gb)$$ for all $a \in \mathcal{A}$, $b \in \mathcal{B}$ with action of the arrows defined in the obvious way, meaning that for $f: a^\prime \rightarrow a$ in $\mathcal{A}$ and $g : b \rightarrow b^\prime$ the induced map will be just $$u^{F,G}(f, g) = u(Ff,Gg): u(Fa,Gb) \rightarrow u(Fa^\prime, Gb^\prime) $$
	 $\phi_u$ will be given by the trivial components $\phi_u(a,b) = id : u^{F,G}(a,b) \rightarrow u(Fa,Gb)$. Verifying the universal property is an easy exercise and we leave it to the reader. 
\end{proof}

Let $\mathbb{D}$ be an equipment, $A,B \in \mathbb{D}$ two objects and $f: A \rightarrow B$ a vertical morphism. There are two distinguished horizontal morphisms we denote $f^*: A \nrightarrow B$ and $f_* : B \nrightarrow A$ obtained by filling the following niches: 

\begin{center}
	\begin{tikzpicture}

\node (v1) at (-2.8,3.2) {$A$};
\node (v3) at (-1.2,3.2) {$B$};
\node (v2) at (-2.8,1.8) {$B$};
\node (v4) at (-1.2,1.8) {$B$};
\draw[->]  (v1) edge node[left, font= \scriptsize]{$f$} (v2);
\draw[->, dashed]  (v1) edge node[above, font= \scriptsize]{$f^*$} (v3);
\draw[->]  (v3) edge node[right, font= \scriptsize]{$1_B$} (v4);
\draw (v2) edge [double]  (v4);

\end{tikzpicture} \ \ \ \ \ 
\begin{tikzpicture}

\node (v1) at (-2.8,3.2) {$B$};
\node (v3) at (-1.2,3.2) {$A$};
\node (v2) at (-2.8,1.8) {$B$};
\node (v4) at (-1.2,1.8) {$B$};
\draw[->]  (v1) edge node[left, font= \scriptsize]{$1_B$} (v2);
\draw[->, dashed]  (v1) edge node[above, font= \scriptsize]{$f_*$} (v3);
\draw[->]  (v3) edge node[right, font= \scriptsize]{$f$} (v4);
\draw (v2) edge [double]  (v4);

\end{tikzpicture}
\end{center}
$f^*$ is called the \textit{companion} of $f$ and $f_*$ the \textit{cojoint} of $f$. 

If $F: \mathcal{C} \rightarrow \mathcal{D}$ is a functor between categories then its companion  in \textbf{Prof} is given by $$F^*(c,d) = \mathcal{D}(Fc,d)$$
and its cojoint by $$F_*(c,d) = \mathcal{D}(d,Fc)$$ 
It is not our intention to give a full exposition of this topic. The main point we want to bring home is that the companion of a functor can be understood formally as satisfying a universal property in \textbf{Prof}! We will mention a few facts.

Comapanions and cojoints can be formulated independent of the equipment property (see \cite{shulman2008framed}). Then a double category with companions and cojoints satisfies the equipment property, because given a niche 
		\begin{center}
		\begin{tikzpicture}
		
		\node (v1) at (-2.8,3.2) {$A$};
		\node (v3) at (-1.2,3.2) {$B$};
		\node (v2) at (-2.8,1.8) {$C$};
		\node (v4) at (-1.2,1.8) {$D$};
		\draw[->]  (v1) edge node[left, font= \scriptsize]{$f$} (v2);
		\draw[->]  (v3) edge node[right, font= \scriptsize]{$g$} (v4);
		\draw[->]  (v2) edge node[below, font= \scriptsize]{$u$} (v4);
		
		\end{tikzpicture}
	\end{center}
	we have the filler $u^{F,G} = g^* \otimes u \otimes f_*$ and it can be shown to be universal. The moral is that for a double category, satisfying the equipment property and having companions and cojoints are equivalent.

The companion and cojoint constructions are well-behaved in terms of functoriality. However we should note that we do not have $(gf)^* = g^*\otimes f^*$ as a strict equality but we have an isomorphism $$(gf)^* \cong g^*\otimes f^*$$
which can easily be seen as follows: for composable maps $A \xrightarrow{f} B \xrightarrow{g} C$ consider the niche 
\begin{center}
	\begin{tikzpicture}
	
	\node (v1) at (-2.8,3.2) {$A$};
	\node (v3) at (-1.2,3.2) {$C$};
	\node (v2) at (-2.8,1.8) {$C$};
	\node (v4) at (-1.2,1.8) {$C$};
	\draw[->]  (v1) edge node[left, font= \scriptsize]{$gf$} (v2);
	\draw[->]  (v3) edge node[right, font= \scriptsize]{$1$} (v4);
	\draw (v2) edge [double]  (v4);

	\end{tikzpicture}
\end{center}
and use the equipment property to fill it in two steps
\begin{center}
	\begin{tikzpicture}
	\node (v1) at (-4.2,4) {$A$};
	\node (v4) at (-2.8,4) {$C$};
	\node (v2) at (-4.2,2.8) {$B$};
	\node (v5) at (-2.8,2.8) {$C$};
	\node (v3) at (-4.2,1.6) {$C$};
	\node (v6) at (-2.8,1.6) {$C$};
	
	\draw[->]  (v1) edge node[left, font= \scriptsize]{$f$} (v2);
	\draw[->]  (v2) edge node[left, font= \scriptsize]{$g$} (v3);
	\draw[->, dashed]  (v1) edge node[above, font= \scriptsize]{$g^* \otimes f^*$} (v4);
	\draw[->, dashed]  (v2) edge node[above, font= \scriptsize]{$g^*$} (v5);
	\draw[->]  (v4) edge node[right, font= \scriptsize]{$1_C$} (v5);
	\draw[->]  (v5) edge node[right, font= \scriptsize]{$1_C$} (v6);
	\draw  (v3) edge [double] (v6);
	\end{tikzpicture}
\end{center}
by first filling the bottom niche and then the upper one. By the above  the filler will be $g^* \otimes f^*$. But composition of universal squares is universal and hence the desired isomorphism. The same discussion is valid for cojoints as well. So these constructions define weak functors 
$$(\cdot)^* : \mathbb{D}_0 \rightarrow \mathbb{D}_h$$
and
$$(\cdot)_* : \mathbb{D}_0 \rightarrow \mathbb{D}_h$$
from the vertical category $\mathbb{D}_0$ of $\mathbb{D}$ to its horizontal 2-category.

Lastly it is worth discussing the dual of the equipment property postulated above. We can ask of a double category $\mathbb{D}$ to have a universal filler $u_{f,g}$ for any configuration of solid arrows $u,f,g$ as in the diagram:
\begin{center}
	\begin{tikzpicture}
	
	\node (v1) at (-3.5,3.5) {$A$};
	\node (v3) at (-2,3.5) {$B$};
	\node (v2) at (-3.5,2) {$C$};
	\node (v4) at (-2,2) {$D$};
	\draw[->]  (v1) edge node[left, font= \scriptsize]{$f$} (v2);
	\draw[->]  (v1) edge node[above, font= \scriptsize]{$u$} (v3);
	\draw[->, dashed]  (v2) edge node[below, font= \scriptsize]{$u_{f,g}$} (v4);
	\draw[->]  (v3) edge node[right, font= \scriptsize]{$g$} (v4);
	\end{tikzpicture}
\end{center}
Luckily for us the equipment property postulated above and its dual are equivalent for a double category. This is the content of \cite[Theorem 4.1]{shulman2008framed}. Roughly speaking this is because the dual equipment property allows us to define companions and cojoints (in the obvious dual manner) and hence the equivalence.

\subsubsection{Colimits in double categories}

If $\mathcal{C}$ is a category, $\mathcal{J}$ a small category and $$F : \mathcal{J} \rightarrow \mathcal{C}$$ a diagram then the colimit of this diagram is an object $\text{colim} D \in \mathcal{C}$ which represents $D$. More precisely, $\text{colim} F$ is equipped with a natural transformation to the constant functor $F \Rightarrow \text{colim} F$ such that given another object $c \in \mathcal{C}$ and a natural transformation $F \Rightarrow c$ there is a unique map $\text{colim} F \rightarrow c$ such that the triangle
\begin{center}
	\begin{tikzpicture}

\node (v1) at (-5,5) {$F$};
\node (v2) at (-5,3.5) {$\text{colim} F$};
\node (v3) at (-3,3.5) {$c$};
\draw[->]  (v1) edge[double] (v2);
\draw[->]  (v1) edge[double] (v3);
\draw[->, dashed]  (v2) edge node[below, font = \scriptsize]{$\exists !$} (v3);
\end{tikzpicture}
\end{center}
commutes. It is common practice to depict the universal property as
\begin{center}
	\begin{tikzpicture}

	\draw  (-3.5,4.5) node (v7) {} ellipse (2 and 1);
	\node (v3) at (-4.2564,4.9229) {};
	\node (v4) at (-3.2089,4.1419) {};
	\node (v5) at (-2.7328,4.5801) {};
	\node (v1) at (-4.9229,4.3897) {};
	\node (v2) at (-5.3038,7.932) {$\text{colim}F$};
	\node (v6) at (-2.2567,8.9698) {$c$};
	\draw[->]  (v1) edge (v2);
	\draw[->]  (v3) edge (v2);
	\draw[->]  (v4) edge (v2);
	\draw[->]  (v5) edge (v2);
	\draw[->]  (v1) edge (v6);
	\draw[->]  (v3) edge (v6);
	\draw[->]  (v7) edge (v6);
	\draw[->]  (v4) edge (v6);
	\draw[->]  (v5) edge (v6);
	\draw[->, dashed]  (v2) edge node[above, font = \scriptsize]{$\exists !$} (v6);
	
	\node at (-5.9323,4.4086) {$F$};
	
	\end{tikzpicture}
\end{center}

In the context of double categories we are interested in the case where the digram $F$ is laying in the horizontal direction and the rest of the maps are vertical, and we may replace the corresponding commutative triangles with cells. We obtain this way the notion of double colimit.

Let $\mathbb{D}$ be a double category and 
$$F : \mathcal{J} \rightarrow \mathbb{D}_h$$ be a diagram in the horizontal 2-category of $\mathbb{D}$. Recall that functors whose target is a weak 2-category assign to composable pairs $\xrightarrow{f} \xrightarrow{g}$ an invertible 2-cell $\alpha_{f,g}: F(g)F(f) \Rightarrow F(gf)$. 

\begin{center}
	
\begin{tikzpicture}
\node (v1) at (-4,3) {$i$};
\node (v2) at (-3,4.5) {$j$};
\node (v3) at (-2,3) {$k$};
\draw[->]  (v1) edge node[left, font = \scriptsize]{$f$} (v2);
\draw[->]  (v2) edge node[right, font = \scriptsize]{$g$} (v3);
\draw[->]  (v1) edge node[below, font = \scriptsize]{$gf$} (v3);

\node (v4) at (-1.2,3.8) {};
\node (v5) at (0.4,3.8) {};

\draw[|->]  (v4) edge node[above, font = \scriptsize]{$F$} (v5);
\node (v6) at (1.5,4.5) {$F(i)$};
\node (v8) at (3.5,4.5) {$F(j)$};
\node (v9) at (5.5,4.5) {$F(k)$};
\node (v7) at (1.5,3) {$F(i)$};
\node (v10) at (5.5,3) {$F(k)$};
\draw[->]  (v6) edge node[left, font = \scriptsize]{$1$} (v7);
\draw[->]  (v6) edge node[above, font = \scriptsize]{$Ff$} (v8);
\draw[->]  (v8) edge node[above, font = \scriptsize]{$Fg$} (v9);
\draw[->]  (v9) edge node[right, font = \scriptsize]{$1$} (v10);
\draw[->]  (v7) edge node[below, font = \scriptsize]{$F(gf)$} (v10);

\node at (3.5,3.7) {$\alpha_{f,g}$};
\end{tikzpicture}
\end{center}
For emphasis we will denote such weak functors $(F,\alpha)$.
\begin{definition}
	Let $(F,\alpha)$, $(G,\beta)$ be two horizontal diagrams in a double category $\mathbb{D}$ indexed by a category $\mathcal{J}$. A vertical transformation $\tau: F \Rightarrow G$ consists of:
	\begin{itemize}
		\renewcommand\labelitemi{--}
			\item  A vertical morphism $\tau_i : F(i) \rightarrow G(i)$ or each $i \in \mathcal{J}$
			
			\item A 2-cell
			\begin{center}
				\begin{tikzpicture}
				
				\node (v1) at (-2.8,3.2) {$F(i)$};
				\node (v3) at (-1.2,3.2) {$F(j)$};
				\node (v2) at (-2.8,1.8) {$G(i)$};
				\node (v4) at (-1.2,1.8) {$G(j)$};
				\draw[->]  (v1) edge node[left, font= \scriptsize]{$\tau_i$} (v2);
				\draw[->]  (v1) edge node[above, font= \scriptsize]{$Ff$} (v3);
				\draw[->]  (v3) edge node[right, font= \scriptsize]{$\tau_j$} (v4);
				\draw[->] (v2) edge node[below, font= \scriptsize]{$Gf$} (v4);
				
				\node at (-2,2.5) {$\tau_f$};
				\end{tikzpicture}
			\end{center}
			for each morphism $f: i \rightarrow j$ in $\mathcal{J}$ 
	\end{itemize}
such that everything commutes, meaning for all composable pairs $i \xrightarrow{f} j \xrightarrow{g} k$ we have 
\begin{center}
	\begin{tikzpicture}

	\node (v3) at (-4.5,4) {$F(i)$};
	\node (v4) at (-3,4) {$F(j)$};
	\node (v5) at (-1.5,4) {$F(k)$};
	\node (v6) at (-4.5,2.5) {$G(i)$};
	\node (v7) at (-3,2.5) {$G(j)$};
	\node (v8) at (-1.5,2.5) {$G(k)$};
	\node (v1) at (-4.5,5.5) {$F(i)$};
	\node (v2) at (-1.5,5.5) {$F(k)$};
	\node (v9) at (-4.5,1) {$G(i)$};
	\node (v10) at (-1.5,1) {$G(k)$};
	\node at (-3,4.8) {$\alpha_{f,g}^{-1}$};
	\node at (-3.7,3.2) {$\tau_f$};
	\node at (-2.2,3.2) {$\tau_g$};
	\node at (-3,1.6) {$\beta_{f,g}$};
	\node at (-0.7,3.2) {$=$};
	\node (v11) at (0,4) {$F(i)$};
	\node (v12) at (0,2.5) {$ G(i)$};
	\node (v13) at (1.5,4) {$F(k)$};
	\node (v14) at (1.5,2.5) {$G(k)$};
	\node at (0.8,3.2) {$\tau_{gf}$};
	\draw[->]  (v1) edge node[above, font= \scriptsize]{$F(gf)$} (v2);
	\draw[->]  (v1) edge node[left, font= \scriptsize]{$1$} (v3);
	\draw[->]  (v3) edge node[above, font= \scriptsize]{$Ff$} (v4);
	\draw[->]  (v4) edge node[above, font= \scriptsize]{$Fg$} (v5);
	\draw[->]  (v2) edge node[right, font= \scriptsize]{$1$} (v5);
	\draw[->]  (v3) edge node[left, font= \scriptsize]{$\tau_i$} (v6);
	\draw[->]  (v4) edge node[left, font= \scriptsize]{$\tau_j$} (v7);
	\draw[->]  (v5) edge node[left, font= \scriptsize]{$\tau_k$} (v8);
	\draw[->]  (v6) edge node[above, font= \scriptsize]{$Gf$} (v7);
	\draw[->]  (v7) edge node[above, font= \scriptsize]{$Gg$} (v8);
	\draw[->]  (v6) edge node[left, font= \scriptsize]{$1$} (v9);
	\draw[->]  (v8) edge node[left, font= \scriptsize]{$1$} (v10);
	\draw[->]  (v9) edge node[below, font= \scriptsize]{$G(gf)$} (v10);
	\draw[->]  (v11) edge  (v12);
	\draw[->]  (v11) edge  (v13);
	\draw[->]  (v13) edge  (v14);
	\draw[->]  (v12) edge  (v14);
	\end{tikzpicture}
\end{center}
or equationally $$\beta_{f,g}(\tau_g * \tau_f)\alpha_{f,g}^{-1} = \tau_{gf} $$
\end{definition}

For an object $c \in \mathbb{D}$ we have a constant diagram, which by abuse we denote $c : \mathcal{J} \rightarrow \mathbb{D}_h$ which send all objects to $c$, maps to $1_c$ and with structure isomorphisms the unitors of $\mathbb{D}_h$.

Now we define the double colimit of a horizontal diagram $F$ to be an object $\text{dcolim} F \in \mathbb{D}$ equipped with a vertical transformation $F \Rightarrow \text{dcolim} F$ which is universal, meaning any other vertical transformation to some object $F \Rightarrow c$ factors through a unique vertical arrow $\text{dcolim} D \rightarrow c$. 
\begin{center}
	\begin{tikzpicture}
	
	\node (v1) at (-5,5) {$F$};
	\node (v2) at (-5,3.5) {$\text{dcolim}F$};
	\node (v3) at (-3,3.5) {$c$};
	\draw[->]  (v1) edge[double] (v2);
	\draw[->]  (v1) edge[double] (v3);
	\draw[->, dashed]  (v2) edge node[below, font = \scriptsize]{$\exists !$} (v3);
	\end{tikzpicture}
\end{center}

\begin{remark}
	More general colimits are discussed in \cite{grandis1999limits}, in which $\mathcal{J}$ is allowed to be a double category. For our purposes it is enough to consider double colimits of horizontal diagrams.
\end{remark}

We have two important examples in \textbf{Prof}. First let $\Delta^1 = (0 \rightarrow 1)$ be the interval category and let the functor 
$$F: \Delta^1 \rightarrow \text{\textbf{Prof}}_h$$
 pick a profunctor $u : \mathcal{C}^{op} \times \mathcal{D} \rightarrow
  \text{\textbf{Set}}$. Then $$\text{dcolim}F = \text{col}(u)$$
So besides comapnion and cojoints, the collage construction of a profunctor satisfies a universal property in the double category \textbf{Prof}, which if unpacked says that the colloage $\text{col}(u)$ is equipped with a 2-cell $\phi_u$
\begin{center}
	\begin{tikzpicture}
	
	\node (v1) at (-2.8,3.2) {$\mathcal{C}$};
	\node (v3) at (-1.2,3.2) {$\mathcal{D}$};
	\node (v2) at (-2.8,1.8) {$\text{col}(u)$};
	\node (v4) at (-1.2,1.8) {$\text{col}(u)$};
	\draw[left hook->]  (v1) edge  (v2);
	\draw[->]  (v1) edge node[above, font= \scriptsize]{$u$} (v3);
	\draw[left hook->]  (v3) edge  (v4);
	\draw (v2) edge [double]  (v4);
	
	\node at (-2,2.5) {$\phi$};
	\end{tikzpicture}
\end{center}
 such that any other 2-cell of the form 
\begin{center}
	\begin{tikzpicture}
	
	\node (v1) at (-2.8,3.2) {$\mathcal{C}$};
	\node (v3) at (-1.2,3.2) {$\mathcal{D}$};
	\node (v2) at (-2.8,1.8) {$\mathcal{E}$};
	\node (v4) at (-1.2,1.8) {$\mathcal{E}$};
	\draw[->]  (v1) edge node[left, font= \scriptsize]{$F$} (v2);
	\draw[->]  (v1) edge node[above, font= \scriptsize]{$u$} (v3);
	\draw[->]  (v3) edge node[right, font= \scriptsize]{$G$} (v4);
	\draw (v2) edge [double]  (v4);
	
	\node at (-2,2.5) {$\psi$};
	\end{tikzpicture}
\end{center}
for some category $\mathcal{E}$ and functors $F,G$ factors uniquely through $\phi_u$, meaning there is a functor $H: \text{col}(u) \rightarrow \mathcal{E}$ such that 
\begin{center}
	\begin{tikzpicture}
	
	\node (v1) at (-2.8,3.2) {$\mathcal{C}$};
	\node (v3) at (-1.2,3.2) {$\mathcal{D}$};
	\node (v2) at (-2.8,1.8) {$\mathcal{E}$};
	\node (v4) at (-1.2,1.8) {$\mathcal{E}$};
	\draw[->]  (v1) edge node[left, font= \scriptsize]{$F$} (v2);
	\draw[->]  (v1) edge node[above, font= \scriptsize]{$u$} (v3);
	\draw[->]  (v3) edge node[right, font= \scriptsize]{$G$} (v4);
	\draw (v2) edge [double]  (v4);
	
	\node at (-2,2.5) {$\psi$};
	
\node at (-0.4,2.5) {$=$};
\node (v5) at (0.5,4) {$\mathcal{C}$};
\node (v7) at (2,4) {$\mathcal{D}$};
\node (v6) at (0.5,2.5) {$\text{col}(u)$};
\node (v8) at (2,2.5) {$\text{col}(u)$};
\node (v9) at (0.5,1) {$\mathcal{E}$};
\node (v10) at (2,1) {$\mathcal{E}$};
\draw[->]  (v5) edge node[left, font= \scriptsize]{$i$} (v6);
\draw[->]  (v5) edge node[above, font= \scriptsize]{$u$}(v7);
\draw[->]  (v7) edge node[right, font= \scriptsize]{$j$}(v8);
\draw (v6) edge [double](v8);
\draw[->]  (v6) edge node[left, font= \scriptsize]{$H$} (v9);
\draw[->]  (v8) edge node[right, font= \scriptsize]{$H$} (v10);
\draw  (v9) edge [double] (v10);
\node at (1.2,3.2) {$\phi_u$};
\node at (1.2,1.7) {$1_H$};
	
	\end{tikzpicture}
\end{center}
Here we take the components $\phi_u(c,d)$ to be simply the identity map $u(c,d) \rightarrow \text{col}(u)(c,d)$ and it is straightforward to verify the universality of $\phi_u$.  

We conclude this discussion with our second example of double colimits which is the Grothendieck construction for categories. We present this in the form of a theorem.

\grothdcolim

\begin{proof}
	We prove the theorem directly by verifying the universal property of double colimits. By definition of the Grothendieck construction we have inclusions $\iota_i : \mathcal{C}_i \rightarrow \textbf{Gro}(F)$ for $i \in \mathcal{J}$ given by identity on objects and on maps $(a \xrightarrow{f} b) \in \mathcal{C}_i$ by $\iota_i(f) = (1_{\mathcal{C}_i}, f)$. 
	
	Next, for all functors $F_\alpha : \mathcal{C}_i \rightarrow \mathcal{C}_j$ in the image of $F$ we have a 2-cell 
	\begin{center}
		\begin{tikzpicture}
		
		\node (v1) at (-3.4,3.2) {$\mathcal{C}_i$};
		\node (v3) at (-0.8,3.2) {$\mathcal{C}_j$};
		\node (v2) at (-3.4,1.8) {$\textbf{Gro}(F)$};
		\node (v4) at (-0.8,1.8) {$\textbf{Gro}(F)$};
		\draw[->]  (v1) edge node[left, font= \scriptsize]{$\iota_i$} (v2);
		\draw[->]  (v1) edge node[above, font= \scriptsize]{$F_\alpha^*$} (v3);
		\draw[->]  (v3) edge node[right, font= \scriptsize]{$\iota_j$} (v4);
		\draw (v2) edge [double]  (v4);
		
		\node at (-2,2.5) {$\phi_\alpha$};
		\end{tikzpicture}
	\end{center}
with components for a pair of objects $x \in \mathcal{C}_i$, $y \in \mathcal{C}_j$ given by $\phi_\alpha(x,y)(f) = (\alpha,f)$ where $f: F_\alpha x \rightarrow y$ is a map in $\mathcal{C}_j$.
The above assemble to a vertical transformation 
$$\phi: F^* \Rightarrow \textbf{Gro}(F)$$ 

To verify universality assume we have a category $\mathcal{D}$ and a vertical transformation $\psi: F^* \Rightarrow \mathcal{D}$ in \textbf{Prof}. Let us denote the components of $\psi$ by $\psi_i : \mathcal{C}_i \rightarrow \mathcal{D}$ and 
\begin{center}
	\begin{tikzpicture}
	
	\node (v1) at (-3.4,3.2) {$\mathcal{C}_i$};
	\node (v3) at (-0.8,3.2) {$\mathcal{C}_j$};
	\node (v2) at (-3.4,1.8) {$\mathcal{D}$};
	\node (v4) at (-0.8,1.8) {$\mathcal{D}$};
	\draw[->]  (v1) edge node[left, font= \scriptsize]{$\psi_i$} (v2);
	\draw[->]  (v1) edge node[above, font= \scriptsize]{$F_\alpha^*$} (v3);
	\draw[->]  (v3) edge node[right, font= \scriptsize]{$\psi_j$} (v4);
	\draw (v2) edge [double]  (v4);
	
	\node at (-2,2.5) {$\psi_\alpha$};
	\end{tikzpicture}
\end{center}
for a morphism $\alpha: i \rightarrow j$ in $\mathcal{J}$.

 We have an induced functor
$$G: \textbf{Gro}(F) \rightarrow \mathcal{D}$$ 
which on objects is given by $G(x) = \psi_i(x)$ for $x\in \mathcal{C}_i$. For a morphism $(\alpha,f) : x \rightarrow y$ in $\textbf{Gro}(F)$, $x\in \mathcal{C}_i$, $y\in \mathcal{C}_j$, we define 
$$G(\alpha,f) = \psi_\alpha(f)$$
It is easy to see that $G \circ \phi = \psi$.

\end{proof}

\begin{note}
It could be possible to prove the above theorem using the properties of companions and the already known fact that the Grothendieck construction is a two dimensional lax colimit (see \cite{gepner1501lax}). However we find this treatment simpler and more meaningful. What the theorem is telling us is that the Grothendieck construction can be regarded as a collage of the whole diagram, and this is formally obtained by first transposing the diagram in the realm of profunctors via the companion functor and then taking its double colimit. To the best of the author's knowledge the Grothendieck construction is not presented in this fashion anywhere in the literature.
\end{note}

\section{Higher equipments}

Our objects of interest will be (possibly weak) simplicial categories
$$\mathbb{E} : \Delta^{op} \rightarrow \text{\textbf{Cat}}$$ 
In this paper the term simplicial category means what it is supposed to: a simplicial object in the category of categories. It is our intention in this section to show that: 
\begin{enumerate}
	\item Simplicial categories are categorical structures in themselves. We will treat them as 2-fold structures and explain how we can implement some of the categorical concepts coming from double category theory. In particular we propose a definition of equipment.
	\item The double category theory we develop serves as an organizing principle in homotopy theory. Notions such as higher cylinders, the mapping cylinder of a simplex and the homotopy colimit of a diagram of spaces are naturally captured as double colimits.
	\item They unify double category theory and the theory of simplicialy enriched categories.
\end{enumerate}

\subsection{Simplicial categories}
\label{scat}

Before postulating the equipment property we comment a little on simplicial categories themselves. If we think of categories as simplicial sets then a simplicial category is some kind of bisimplicial set. The reader who is familiar with the latter may immediately understand the sense in which such objects are 2-fold structures: a bisimplicial set has vertical and horizontal simplices tied together via bisimplices. However, since we are dealing with simplicial categories and not general bisimplicial sets the geometry is easier to grasp.

Let $$\mathbb{E}: \Delta^{op} \rightarrow \text{\textbf{Cat}}$$ be a simplicial category. The functor $\mathbb{E}$ assigns to each ordinal $[n] \in \Delta$ a category we denote $\mathbb{E}_n$, and specifies face and degeneracy functors $d_i$ and $s_i$. We will think of the objects of $\mathbb{E}_n$ as $n$-simplicies lying in the horizontal direction. For example an object $x \in \mathbb{E}_2$ may be depicted as 
\begin{center}
	\begin{tikzpicture}

	\node (v1) at (-4,2.5) {$x_0$};
	\node (v2) at (-2.5,3.5) {$x_1$};
	\node (v3) at (-1,2.5) {$x_2$};
	\draw[->]  (v1) edge (v2);
	\draw[->]  (v2) edge (v3);
	\draw[->]  (v1) edge (v3);
	\end{tikzpicture}
\end{center}
This way a morphism in $\mathbb{E}_n$ may be thought of as a prism (bisimplex). For example a map $f: x \rightarrow y$ in $\mathbb{E}_2$ is drawn as 
\begin{center}
	\begin{tikzpicture}

	\node (v1) at (-4,2.5) {$x_0$};
	\node (v2) at (-2.5,3.5) {$x_1$};
	\node (v3) at (-1,2.5) {$x_2$};
	\node (v4) at (-4,0.5) {$y_0$};
	\node (v5) at (-2.5,1.5) {$y_1$};
	\node (v6) at (-1,0.5) {$y_2$};
	\draw[->]  (v1) edge (v2);
	\draw[->]  (v2) edge (v3);
	\draw[->]  (v1) edge (v3);
	\draw[->]  (v1) edge node[left, font= \scriptsize]{$f_0$} (v4);
	\draw[->]  (v2) edge node[left, font= \scriptsize]{$f_1$} (v5);
	\draw[->]  (v3) edge node[right, font= \scriptsize]{$f_2$} (v6);
	\draw[->]  (v4) edge (v5);
	\draw[->]  (v5) edge (v6);
	\draw[->]  (v4) edge (v6);
	\end{tikzpicture}
\end{center}
We may think of composition in $\mathbb{E}_n$ as being an operation which produces a prism when we have two prisms on top of each other. This will be the vertical composition for this structure. 

Notice in particular that maps in $\mathbb{E}_1$ will look just like 2-cells in double categories
\begin{center}
	\begin{tikzpicture}
	
	\node (v1) at (-3.5,3.5) {$x_0$};
	\node (v3) at (-2,3.5) {$x_1$};
	\node (v2) at (-3.5,2) {$y_0$};
	\node (v4) at (-2,2) {$y_1$};
	\draw[->]  (v1) edge node[left, font= \scriptsize]{$f_0$} (v2);
	\draw[->]  (v1) edge node[above, font= \scriptsize]{$x$} (v3);
	\draw[->]  (v2) edge node[below, font= \scriptsize]{$y$} (v4);
	\draw[->]  (v3) edge node[right, font= \scriptsize]{$f_1$} (v4);
	
	\node at (-2.7505,2.7145) {$f$};
	\end{tikzpicture}
\end{center}
except we may compose them vertically but not horizontally. In the horizontal direction instead of composition we have a simplicial structure and we will take advantage of that. After all, categories are just special simplicial sets. 

Now we turn to examples of the sort we are have in mind. We will construct examples of simplicial categories $\mathbb{E}$ with $\mathbb{E}_0 = \textbf{Cat}, \textbf{sSet}, \textbf{Top}$. These will be named $\textbf{Cat}^\sharp, \textbf{sSet}^\sharp$ and $\textbf{Top}^\sharp$ respectively. We also construct for a category $\mathcal{C}$ a simplicial category $\textbf{coSpan}(\mathcal{C})^\sharp$ analogous to the double category of cospans associated to $\mathcal{C}$. 

Inspired by profunctors and their combinatorial interpretation we define the objects of $\textbf{Cat}^\sharp_n$ to be pairs $(\mathcal{C}, p)$ where $\mathcal{C}$ is a category and $p$ is a functor 
$$p: \mathcal{C} \rightarrow \Delta^n$$
A morphism $(\mathcal{C}, p) \rightarrow (\mathcal{D}, q)$ in $\textbf{Cat}^\sharp_n$ is a commutative triangle
\begin{center}
	\begin{tikzpicture}

	\node (v1) at (-3,3) {$\mathcal{C}$};
	\node (v3) at (-1,3) {$\mathcal{D}$};
	\node (v2) at (-2,2) {$\Delta^n$};
	\draw[->]   (v1) edge node[left, font= \scriptsize]{$p$} (v2);
	\draw[->]   (v3) edge node[right, font= \scriptsize]{$q$} (v2);
	\draw[->]   (v1) edge (v3);
	\end{tikzpicture}
\end{center}
In other words this is the slice category so we just write
$$\textbf{Cat}^\sharp_n = \textbf{Cat}/ \Delta^n$$
We abuse and write $\mathcal{C} \in \textbf{Cat}^\sharp_n$ and leave $p$ understood. 

We define faces of $\mathcal{C}$ by pulling back along the face inclusions $\Delta^{n-1} \hookrightarrow \Delta^n$ so that the $i$-th face is given by the pullback square
\begin{center}
	\begin{tikzpicture}
	
	\node (v4) at (-3.5,4) {$d_i\mathcal{C}$};
	\node (v1) at (-2,4) {$\mathcal{C}$};
	\node (v3) at (-3.5,2.5) {$\Delta^{n-1}$};
	\node (v2) at (-2,2.5) {$\Delta^n$};
	\draw[->]  (v1) edge node[right, font= \scriptsize]{$p$} (v2);
	\draw[right hook->]  (v3) edge node[above, font= \scriptsize]{$d^i$} (v2);
	\draw[->, dashed]  (v4) edge (v3);
	\draw[->, dashed]  (v4) edge (v1);
	\end{tikzpicture}
\end{center}
Degeneracies are again defined by pulling back, as indicated in the diagram
\begin{center}
	\begin{tikzpicture}
	
	\node (v4) at (-5,4) {$s_i\mathcal{C}$};
	\node (v1) at (-2,4) {$\mathcal{C} \times \Delta^1$};
	\node (v3) at (-5,2.5) {$\Delta^{n+1}$};
	\node (v2) at (-2,2.5) {$\Delta^n \times \Delta^1$};
	\draw[->]  (v1) edge node[right, font= \scriptsize]{$p \times \Delta^1$} (v2);
	\draw[right hook->]  (v3) edge node[above, font= \scriptsize]{$\iota_i$} (v2);
	\draw[->, dashed]  (v4) edge (v3);
	\draw[->, dashed]  (v4) edge (v1);
	\end{tikzpicture}
\end{center}
where the map $\iota_i$ is given by components $s^i: \Delta^{n+1} \rightarrow \Delta^n$ and $\chi_{>i} : \Delta^{n+1} \rightarrow \Delta^1$, the latter being the characteristic function of the subset $\{i+1, \dots, n\} \subseteq [n]$
$$\chi_{>i}(j) = \begin{cases}
0  &\text{if} \ \ j \leq i \\
1  &\text{if} \ \ j>i 
\end{cases}$$

The simplicial identities are not obvious from the above description of faces and degeneracies. They will once we rephraze them in terms of collages. 

\begin{definition}
	An $n$-collage is an $(n+2)$-tuples of categories $(\mathcal{C}_0, ..., \mathcal{C}_n , \mathcal{C})$ such that  $\mathcal{C}$ is a category satisfying:
	\begin{itemize}
		\item[i)] $\text{ob}(\mathcal{C}) = \coprod_{i=1}^n \text{ob}(\mathcal{C}_i)$
		\item[ii)] $\mathcal{C}(a,b) = \mathcal{C}_i(a,b)$ if $a,b \in \mathcal{C}_i$
		\item[iii)] $\mathcal{C}(a,b) = \emptyset$ if $a \in \mathcal{C}_i$, $b \in \mathcal{C}_j$ and $i>j$
	\end{itemize}
\end{definition} 

In other words an $n$-collage $\mathcal{C}$ is obtained by adding new arrows from the objects of $\mathcal{C}_i$ to the objects of $\mathcal{C}_j$ for $i<j$. We saw an example of a $2$-collage when discussing the composition of profunctors. Let a morphism of $n$-collages $F: \mathcal{C} \rightarrow \mathcal{D}$ be a functor with $F(a) \in \mathcal{D}_i$ if $a \in \mathcal{C}_i$ and denote $F|_{\mathcal{C}_i} = F_i$. We obtain a category $\textbf{col}_n$. 

It is easy to see that we have an isomorphism of categories
$$\textbf{col}_n \cong \textbf{Cat}/\Delta^n$$ 
Indeed given a functor $p: \mathcal{C} \rightarrow \Delta^n$ we let $\mathcal{C}_i = p^{-1}(i)$ and we see that $\mathcal{C}$ satisfies the above properties. We have chosen to distinguish the notion of $n$-collage for heuristic purposes.  

Having the above identification in mind we may think of the $i$-th face of $\mathcal{C}$ as obtained by deleting the objects of $\mathcal{C}_i$ 
$$d_i\mathcal{C} = \mathcal{C} - \mathcal{C}_i$$
The $i$-th degeneracy of $\mathcal{C}$ to be the category obtained by replicating $\mathcal{C}_i$. More precisely, first form the category $\mathcal{C} \times \Delta^1$, where $\Delta^1 = \{0\rightarrow 1 \}$ is the category with two objects and a single morphism between them. For each $i$ the category $\mathcal{C} \times \Delta^1$ contains two copies of $\mathcal{C}_i$ denoted $(\mathcal{C}_0, 0)$ and $(\mathcal{C}_i, 1)$. Then we have 
$$s_i\mathcal{C} = \mathcal{C} \times \Delta^1 - (\mathcal{C}_0, 1) - \dots -(\mathcal{C}_{i-1}, 1) - (\mathcal{C}_{i+1}, 0) - \dots - (\mathcal{C}_n, 0)$$

We illustrate with an example for clarity. Let $\mathcal{C}_0 = (a)$ be a one object category with just the identity morphism and object labelled $a$ and $\mathcal{C}_1 = (b \rightarrow c)$ be the interval category with objects labelled $b$ and $c$. Consider an object $(\mathcal{C}_0, \mathcal{C}_1, \mathcal{C}) \in \text{\textbf{Cat}}^\sharp_1$ (which is simply a profunctor)
\begin{center}
	\begin{tikzpicture}
	
	\draw  (-4.1,3.5) ellipse (0.3 and 0.5);
	\draw  (-2.5,3.5) ellipse (0.5 and 1);
	\node (v2) at (-2.5,4) {$b$};
	\node (v3) at (-2.5,3) {$c$};
	\node (v1) at (-4.1705,3.4784) {$a$};
	\draw[->]  (v1) edge (v2);
	\draw[->]  (v1) edge (v3);
	\draw[->]  (v2) edge (v3);
	\node at (-4.1165,2.7043) {$\mathcal{C}_0$};
	\node at (-2.4963,2.1462) {$\mathcal{C}_1$};
	\end{tikzpicture}
\end{center}
given by a commutative triangle. We first form the cylinder $\mathcal{C} \times \Delta^1$ 
\begin{center}
	\begin{tikzpicture}
	\draw  (-4.8,5.2) ellipse (0.3 and 0.5);
	\draw  (-2.4,5.2) ellipse (0.5 and 1);
	\node (v2) at (-2.4,5.8) {$b_0$};
	\node (v3) at (-2.4,4.6) {$c_0$};
	\node (v1) at (-4.8,5.2) {$a_0$};
	\draw[->]  (v1) edge (v2);
	\draw[->]  (v1) edge (v3);
	\draw[->] (v2) edge (v3);
	
	\draw  (-1.3,3) ellipse (0.5 and 1);
	\node (v4) at (-1.3,3.6) {$b_1$};
	\node (v5) at (-1.3,2.4) {$c_1$};
	\draw[->]  (v2) edge (v4);
	\draw[->]  (v4) edge (v5);
	\draw[->]  (v3) edge (v5);
	\draw[->]  (v2) edge (v5);
	\draw[->]  (v1) edge (v4);
	\draw[->]  (v1) edge (v5);

	\draw  (-3.8,3.1) ellipse (0.3 and 0.5);
	\node (v6) at (-3.8,3.1) {$a_1$};
	\draw[->]   (v1) edge (v6);
	\draw[->]   (v6) edge (v4);
	\draw[->]   (v6) edge (v5);
	\end{tikzpicture}
\end{center}
Then we obtain $s_0\mathcal{C}$ and $s_1\mathcal{C}$ by the prescribed deletions
\begin{center}
	
	\begin{tikzpicture}
	
	\draw  (-4.5,3.5) ellipse (0.3 and 0.5);
	\draw  (-2,3.5) ellipse (0.5 and 1);
	\node (v2) at (-2,4) {$b_0$};
	\node (v3) at (-2,3) {$c_0$};
	\node (v1) at (-4.6,3.5) {$a_0$};
	\draw[->]  (v1) edge (v2);
	\draw[->]  (v1) edge (v3);
	\draw[->]  (v2) edge (v3);

	\draw  (-3.2,5.5) ellipse (0.3 and 0.5);
	\node (v4) at (-3.3,5.5) {$a_1$};
	\draw[->]  (v1) edge (v4);
	\draw[->]  (v4) edge (v2);
	\draw[->]  (v4) edge (v3);
	\end{tikzpicture}\ \ \ \ \ \ \ \
	\begin{tikzpicture}
	
	\draw  (-4.1,3.5) ellipse (0.3 and 0.5);
	\draw  (-2.4,5.2) ellipse (0.5 and 1);
	\node (v2) at (-2.4,5.6) {$b_0$};
	\node (v3) at (-2.4,4.6) {$c_0$};
	\node (v1) at (-4.1705,3.4784) {$a_0$};
	\draw[->]  (v1) edge (v2);
	\draw[->]  (v1) edge (v3);
	\draw[->] (v2) edge (v3);
	
	\draw  (-1,3) ellipse (0.5 and 1);
	\node (v4) at (-1,3.5) {$b_1$};
	\node (v5) at (-1,2.5) {$c_1$};
	\draw[->]  (v2) edge (v4);
	\draw[->]  (v4) edge (v5);
	\draw[->]  (v3) edge (v5);
	\draw[->]  (v2) edge (v5);
	\draw[->]  (v1) edge (v4);
	\draw[->]  (v1) edge (v5);
	\end{tikzpicture}
\end{center}
We take advantage of this example to illustrate an important point. We obtain $d_1s_0\mathcal{C}$ by deleting $a_1$ from $s_0\mathcal{C}$  
\begin{center}
	\begin{tikzpicture}
	
	\draw  (-4.1,3.5) ellipse (0.3 and 0.5);
	\draw  (-2.5,3.5) ellipse (0.5 and 1);
	\node (v2) at (-2.5,4) {$b_0$};
	\node (v3) at (-2.5,3) {$c_0$};
	\node (v1) at (-4.1705,3.4784) {$a_0$};
	\draw[->]  (v1) edge (v2);
	\draw[->]  (v1) edge (v3);
	\draw[->]  (v2) edge (v3);
	\node at (-4.1165,2.7043) {$\mathcal{C}_0$};
	\node at (-2.4963,2.1462) {$\mathcal{C}_1$};
	\end{tikzpicture}
\end{center}
The simplicial identities dictate $d_1s_0 = 1$ but as it happens too often in category theory strict equalities in two-dimensional environments appear as coherent natural isomorphisms. Indeed we have a natural isomorphism $d_1s_0 \cong 1$. 

In conclusion we have constructed a weak simplicial category
$$\textbf{Cat}^\sharp : \Delta^{op} \rightarrow \textbf{Cat}$$
Verifying the simplcial identities (up to isomorphism) and coherence laws is an easy but tedious exercice and is clear by intuition from the collage perspective. 

In a similar fashion we construct the (weak) simplicial category $\textbf{sSet}^\sharp$. Simply define 
$$\textbf{sSet}^\sharp_n = \textbf{sSet} / \Delta^n$$
with faces and degeneracies exactly as above. To observe the weak simplicial identities we can put things in terms of collages again.

Because of the unfortunate conflict in notation we have denoted by $X(n)$ the set of $n$-simplicies of a simplicial set $X$. For two $0$-simplices $x$ and $y$ in $X$ we denote $X(1)(x,y)$ the set of $1$-simplicies with source $x$ and target $y$. Now we define an $n$-collage of simplicial sets to be an $(n+2)$-tuple of simplicial sets $(X_0, ... X_n, X)$ such that 
\begin{itemize}
	\item[i)] $X(0) = \coprod_{i=0}^n X_i(0)$
	\item[ii)] $X(1)(x,y) = X_i(1)(x,y)$ if $x,y \in X_i(0)$
	\item[iii)] $X(1)(x,y) = \emptyset$ if $x \in X_i(0)$, $y \in X_j(0)$ and $i>j$
\end{itemize}
A morphism $f: X \rightarrow Y$ will be a map of simplicial sets such that $f(x) \in Y_i(0)$ for all $x \in X_i(0)$. 

Just as we did for categories we define faces by deletion, meaning $$d_iX = X - X_i$$ 
$X - X_i$ is simply the simplicial subset of $X$ generated by the simplicies of $X$ whose vertices are not in $X_i$. The degeneracy $s_iX$
 is defined as 
 $$s_iX = X \times \Delta^1 - (X_0, 1) - \dots -(X_{i-1}, 1) - (X_{i+1}, 0) - \dots - (X_n, 0)$$

Interesting spaces are examples of simplices in $\text{\textbf{sSet}}^\sharp$. For two simplicial sets $X$ and $Y$ we have the triples $(X,Y, X \coprod Y)$ and $(X,Y, X * Y)$ in $\text{\textbf{sSet}}^\sharp_1$, where $X*Y$ is the join of $X$ and $Y$. In particular the degeneracy $s_0X$ is presented by the triple $(X,X, X\times \Delta^1)$. 

Conceptualising the cylinder as a degeneracy is a practice we observe throughout this work. This is so because the space $X\times \Delta^n$, which may be regarded as a \textit{higher cylinder}, is naturally equipped with the projection map $X\times \Delta^n \rightarrow \Delta^n$ and hence is an $n$-collage.

Also for a map $f: X \rightarrow Y$ the triple $(X,Y,M_f)$ is present in $\text{\textbf{sSet}}^\sharp_1$, where $M_f$ is the mapping cylinder of $f$. This triple will turn out to be the companion of the vertical morphism $f$, once companions are defined appropriately. Most importantly a cell of the form
\begin{center}
	\begin{tikzpicture}
	
	\node (v1) at (-3,3) {$X$};
	\node (v2) at (-1.5,3) {$X$};
	\node (v3) at (-3,1.5) {$Y$};
	\node (v4) at (-1.5,1.5) {$Y$};
	\draw[-]  (v1) edge[double] node[above, font = \scriptsize]{$s_0X$} (v2);
	\draw[->]  (v1) edge node[left, font = \scriptsize]{$f$} (v3);
	\draw[->]  (v2) edge node[right, font = \scriptsize]{$g$} (v4);
	\draw[-]  (v3)  edge[double] node[below, font = \scriptsize]{$s_0Y$} (v4);
	\end{tikzpicture}
\end{center}
represents precisely a homotopy $f \Rightarrow g$ in \textbf{sSet}. So we see in this example a simplicial category full of homotopical meaning. We will show that the double category theory in these simplicial categories recovers a lot of interesting homotopical information.

Following the same pattern we may define $\textbf{Top}^\sharp$ with 
$$\textbf{Top}^\sharp_n = \textbf{Top}/ |\Delta^n|$$
where \textbf{Top} is the category of topological spaces and $|\Delta^n|$ is the topological $n$-simplex. This example is somehow pathological. First, there is no nice description of the objects of $\textbf{Top}^\sharp_n$ in terms of collages. Second we have counterintuitive objects, for example if $X$ is a space we can consider the constant map $X \rightarrow |\Delta^1|$ at some point in the interior of $|\Delta^1|$. Then $X$ would be a "proarrow" between empty spaces. 

It is perhaps more interesting to consider only CW-complexes and cellular maps. We can reiterate the above by considering $|\Delta^n|$ equipped with the standard CW-structure: one $k$-cell for every subset of order $k$ of $[n]$. The above pathology does not appear because we force structure maps to be cellular. We can also define collages to be $(n+2)$-tuples of CW-complexes $(X_0, ..., X_n, X)$ such that 
\begin{itemize}
	\item[i)] $\text{sk}_0X = \coprod_{i=0}^n \text{sk}_0X_i$
	\item[ii)] The spaces $X_i$ are cellularly embedded in $X$
	\item[iii)] A cell in $X$ whose boundary lies in one of the $X_i$ is a cell of $X_i$
\end{itemize}
where $sk_nX$ denotes the $n$-skeleton of $X$. However such (natural) collages do not correspond precisely to cellular maps $X \rightarrow |\Delta^n|$. 

We used this notion of deletion in the previous examples to define faces and degeneracies. Deletion of one of the $X_i$'s has to be understood in the CW-context, meaning we delete the $1$-cells of $X_i$ right from the first step of constructing $X$ which means we also delete the $1$-cells whose boundary touches one of these cells. We keep deleting higher cells this way.

Hopefully the reader can now define more examples himself. We describe a last one.
Let $\mathcal{C}$ be any category. Let the category $\text{\textbf{cst}}_n$, which we call the \textit{costar} category, have $(n+1)$ objects $\{o_0, ... o_n, o\}$ and just a single morphism $o_i \rightarrow o$ for each $i$. Define an $n$-cospan in $\mathcal{C}$ to be a functor $$\text{cst}_n \rightarrow \mathcal{C}$$ and a map of $n$-cospans is just a natural transformation. Then let $\text{\textbf{coSpan}}(\mathcal{C})^\sharp_n$ be the category of $n$-cospans in $\mathcal{C}$. A map in $\text{\textbf{coSpan}}(\mathcal{C})^\sharp_2$ is just a diagram of the shape
\begin{center}
	\begin{tikzpicture}
	
	\node (v2) at (-2.5,2) {$x$};
	\node (v1) at (-3.5,1.5) {$x_0$};
	\node (v3) at (-2,3) {$x_1$};
	\node (v4) at (-1.5,1.5) {$x_2$};
	\node (v6) at (-2.5,-1) {$y$};
	\node (v7) at (-2,0) {$y_1$};
	\node (v5) at (-3.5,-1.5) {$y_0$};
	\node (v8) at (-1.5,-1.5) {$y_2$};
	\draw[->]  (v1) edge (v2);
	\draw[->]  (v3) edge (v2);
	\draw[->]  (v4) edge (v2);
	\draw[->]  (v5) edge (v6);
	\draw[->]  (v7) edge (v6);
	\draw[->]  (v8) edge (v6);
	\draw[->]  (v1) edge (v5);
	\draw[->]  (v2) edge (v6);
	\draw[->]  (v3) edge (v7);
	\draw[->]  (v4) edge (v8);
	\end{tikzpicture}
\end{center}
with the functors $x, y: \text{cst}_2 \rightarrow \mathcal{C} $ depicted on top and bottom of the picture. And so on in higher dimension. Of course the objects of $\text{\textbf{coSpan}}(\mathcal{C})^\sharp_1$ are the usual cospans.

Let $x : \text{cst}_n \rightarrow \mathcal{C}$ be a functor with $x(o_i) = x_i$. We define faces of $x$ by deleting one of the $x_i$'s. More precisely let $d^i : \text{cst}_{n-1} \rightarrow \text{cst}_n$ be the inclusion functor with $$d^i(o_j) = \begin{cases}
o_j &\text{if} \ \  0 \leq j \leq i \\
o_{j+1} &\text{if} \ \ i+1 \leq j \leq n
\end{cases}$$
Let $d_ix = x\circ d^i$ be obtained via postcomposition. 

We define degeneracies by repetition of one of the $x_i$'s. More precisely let $s^i : \text{cst}_n \rightarrow \text{cst}_{n+1}$ be the projection functor with 
$$s^i(o_j) = \begin{cases}
o_j &\text{if} \ \  0 \leq j \leq i \\
o_{j-1} &\text{if} \ \ i+1 \leq j \leq n+1
\end{cases}$$
Let $s_ix = x\circ s^i$ be obtained via postcomposition. Again the simplicial identities hold but this time strictly.

\begin{note}
	In the simplicial category $\textbf{Cat}^\sharp$ an $n$-simplex $\mathcal{C}$ corresponds to a lax 2-functor
	$$\Delta^n \rightarrow \textbf{Prof}_h$$
	to the horizontal bicategory of \textbf{Prof}, and a morphism in $\textbf{Cat}^\sharp_n$ corresponds to a vertical transformation of such functors. This indicates that $\textbf{Cat}^\sharp$ is some kind of nerve of the double category \textbf{Prof}. This nerve should be analogous to the Duskin nerve for weak 2-categories as defined in \cite{duskin2002simplicial}. We expect to characterize double categories as certain simplicial categories and also expect nerve of equipments to satisfy the equipment property we define in the next section. Such a result would show that $\textbf{sSet}^\sharp$ is not the nerve of a double category and would resonate with the fact that we do not have a double category of simplicial sets. Perhaps we will make these claims more precise in future works.
\end{note}
\begin{note}
	Regrettably we did not construct a simplicial category $\textbf{Set}^\sharp$ with vertical category \textbf{Set}. Such a simplicial category ought to provide a Yoneda theory similar to that discussed in \cite{pare2011yoneda}. We also expect it to provide a free cocompletion with respect to the double colimits we define in \ref{dcolim}.
\end{note}

\subsection{The equipment property}
\label{equip}

\begin{definition}
	Let $\mathbb{E}$ be a simplicial category. $\mathbb{E}$ is said to be an equipment if given any $x \in \mathbb{E}_n$ and maps in $\mathbb{E}_{n-1}$ $$f^i : d_ix \rightarrow y^i$$  such that  $d_if^j = d_{j-1}f^i$ for all $i<j$, then there is an object $y \in \mathbb{E}_n$ equipped with a map $f: x \rightarrow y$ satisfying the following:
	\begin{itemize}
		\item[i)] $d_iy = y^i$ and $d_if = f^i$ for $i= 0, \dots , n$
		\item[ii)] Given any map $h: x \rightarrow z$ in $\mathbb{E}_n$ such that  $d_ih = g^if$ for some $g^i: y^i \rightarrow d_iz$ in $\mathbb{E}_{n-1}$ then there is a unique $g : y \rightarrow z$ such that $h=gf$ and $d_i(g) = g^i$
	\end{itemize}
\end{definition}

The above definition needs some unpacking (and packing). The first thing to notice is that the collection of maps $f^i : d_ix \rightarrow y^i$ which satisfy the prescribed condition can be thought of as a morphism in $E$ from the boundary $\partial x$ of $x$. This is in virtue of the following elementary proposition (which can be found in \cite[p.~11]{goerss2009simplicial})
\begin{proposition}
	Let $S$ be a simplicial set. A map $\sigma: \partial\Delta^n \rightarrow S$ corresponds to a correction of $(n-1)$-simplices $\sigma^0, \sigma^1, \dots ,\sigma^n \in S_{n-1}$ satisfying $d_i\sigma^j = d_{j-1}\sigma^i$ for all $i<j$. 
\end{proposition}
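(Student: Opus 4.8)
The plan is to exhibit $\partial\Delta^n$ as a colimit of representables and then apply the Yoneda lemma. Recall that $\partial\Delta^n \subseteq \Delta^n$ is generated by the $n+1$ faces $d^i : \Delta^{n-1} \hookrightarrow \Delta^n$, $i = 0, \dots, n$; concretely its $m$-simplices are the non-surjective order preserving maps $[m] \to [n]$. The key structural fact is that $\partial\Delta^n$ is the coequalizer
$$\coprod_{0 \le i < j \le n} \Delta^{n-2} \;\rightrightarrows\; \coprod_{i=0}^n \Delta^{n-1} \;\longrightarrow\; \partial\Delta^n,$$
where the copy of $\Delta^{n-2}$ indexed by $(i,j)$ is sent into the $i$-th summand $\Delta^{n-1}$ by $d^{\,j-1}$ and into the $j$-th summand by $d^{\,i}$. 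This is exactly the combinatorial content of the cosimplicial identity $d^{\,j}d^{\,i} = d^{\,i}d^{\,j-1}$ for $i<j$: the overlap $d^i(\Delta^{n-1}) \cap d^j(\Delta^{n-1})$ inside $\Delta^n$ is a single copy of $\Delta^{n-2}$ (the face on the vertex set $[n]\setminus\{i,j\}$), sitting at position $j-1$ in the $i$-th face and at position $i$ in the $j$-th face, and every non-surjective $[m]\to[n]$ factors through some $d^i$.

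Next I would apply the functor $\textbf{sSet}(-, S)$, which carries this colimit to a limit:
$$\textbf{sSet}(\partial\Delta^n, S) \;\longrightarrow\; \prod_{i=0}^n \textbf{sSet}(\Delta^{n-1}, S) \;\rightrightarrows\; \prod_{i<j} \textbf{sSet}(\Delta^{n-2}, S).$$
By the Yoneda lemma $\textbf{sSet}(\Delta^{n-1}, S) = S_{n-1}$ and $\textbf{sSet}(\Delta^{n-2}, S) = S_{n-2}$, and under these identifications precomposition with $d^{\,i}$ becomes the face operator $d_i$ (contravariance of $S$). Hence a map $\sigma : \partial\Delta^n \to S$ is precisely an $(n+1)$-tuple $(\sigma^0, \dots, \sigma^n) \in \prod_i S_{n-1}$, namely $\sigma^i := \sigma \circ d^i$, that is equalized by the two maps into $\prod_{i<j} S_{n-2}$. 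Unwinding those two maps — the $(i,j)$-component of the first sends $\sigma^i \mapsto d_{j-1}\sigma^i$ and that of the second sends $\sigma^j \mapsto d_i\sigma^j$ — the equalizer condition reads $d_i\sigma^j = d_{j-1}\sigma^i$ for all $i<j$, which is the asserted compatibility.

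The one step requiring genuine care — and the main obstacle — is justifying the coequalizer presentation of $\partial\Delta^n$ itself, i.e. that the evident map out of the coequalizer is a dimensionwise bijection. Surjectivity onto $(\partial\Delta^n)_m$ is immediate since every non-surjective $\alpha : [m] \to [n]$ factors through some $d^i$; for injectivity one checks that two factorizations $\alpha = d^i\beta = d^j\gamma$ (say $i<j$) are identified in the coequalizer, a short chase using $d^j d^i = d^i d^{j-1}$ together with uniqueness of epi–mono factorizations in $\Delta$. Alternatively one can bypass the colimit language and argue directly: in one direction set $\sigma^i := \sigma\circ d^i$; in the other, given a compatible tuple, define $\sigma$ on a non-surjective $\alpha : [m]\to [n]$ by choosing $i \notin \mathrm{im}\,\alpha$, writing $\alpha = d^i\beta$ uniquely, and setting $\sigma(\alpha) := \beta^{\ast}\sigma^i$ — independence of the choice of $i$ is exactly where the hypothesis $d_i\sigma^j = d_{j-1}\sigma^i$ enters, and compatibility with all faces and degeneracies is then a routine verification. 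I would present the colimit version, as it is cleaner and makes transparent why the index constraint $i<j$ and the shift $j \mapsto j-1$ appear.
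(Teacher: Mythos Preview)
Your argument is correct and is the standard one: present $\partial\Delta^n$ as the coequalizer of the two maps $\coprod_{i<j}\Delta^{n-2}\rightrightarrows\coprod_i\Delta^{n-1}$ encoding the cosimplicial identity $d^jd^i=d^id^{\,j-1}$, then hom into $S$ and invoke Yoneda. The paper does not supply its own proof of this proposition --- it is stated as an elementary fact with a reference to Goerss--Jardine --- so there is nothing to compare against; your write-up (either the colimit version or the direct well-definedness check you sketch at the end) would serve perfectly well as the missing proof.
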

In light of this fact we will denote a map $\partial\Delta^n \rightarrow \mathbb{E}$ given by $y^i \in E_{n-1}$ by $y^\bullet$, and a transformation between such maps $f^\bullet$. This way we can paraphrase the equipment property more elegantly as:
\begin{quote}
	A simplicial category $\mathbb{E}$ satisfies the \textit{equipment property} if for all $x \in \mathbb{E}_n$ every map from the boundary of $x$
	$$f^\bullet : \partial x \rightarrow y^\bullet$$
	can be universally extended to a map $f : x \rightarrow y$ in $\mathbb{E}_n$ such that $\partial f = f^\bullet$ (and as a consequence $\partial y = y^\bullet$ ). Universality is understood to mean: any map $h: x \rightarrow z$ in $\mathbb{E}_n$ whose faces factor through $f^i$ factors uniquely through f. 
\end{quote} 

Our main example of a higher equipment is $\textbf{sSet}^\sharp$. Indeed assume we have $X \in \text{\textbf{sSet}}^\sharp_n$,  $Y^i \in \text{\textbf{sSet}}^\sharp_{n-1}$ satisfying the above coherent condition and a morphism $f^\bullet: \partial X \rightarrow Y^\bullet$. Then we can take the universal filler $Y = X(f^\bullet)$ to be given as a pushout in \textbf{sSet}
\begin{center}
	\begin{tikzpicture}
	
	\node (v1) at (-3,3) {$\partial X$};
	\node (v2) at (-3,1) {$Y^\bullet$};
	\node (v3) at (-1,3) {$X$};
	\node (v4) at (-1,1) {$Y$};
	\draw[->]  (v1) edge node[left, font= \scriptsize]{$f^\bullet$} (v2);
	\draw[right hook ->]  (v1) edge (v3);
	\draw[->, dashed]  (v3) edge (v4);
	\draw[->, dashed]  (v2) edge (v4);
	\end{tikzpicture}
\end{center} 
Similarly $\textbf{Cat}^\sharp$, $\textbf{Top}^\sharp$ and $\textbf{coSpan}(\mathcal{C})^\sharp$ satisfy the equipment property. 

Simplicial categories form a 2-category themselves (because \textbf{Cat} is a 2-category, see \cite{leinster2004higher}), which we denote \textbf{sCat}. The friend of 2-categories may understand the equipment property depicted as:
\begin{center}
	\begin{tikzpicture}
	
	\node (v1) at (-2.5,3) {$\partial \Delta^n$};
	\node (v2) at (-2.5,1) {$\Delta^n$};
	\node (v3) at (0,3) {$\mathbb{E}$};
	\draw[->]  (v1) edge (v2);
	
	\draw[->, bend right = 30]  (v1) edge  node[below, font= \scriptsize]{$y^\bullet$} (v3);
	\draw[->, bend left = 30]  (v1) edge node[above, font= \scriptsize]{$\partial x$} (v3);
	\node (v4) at (-1.2,3.4) {};
	\node (v5) at (-1.2,2.6) {};
	\draw[->]  (v4) edge[double] node[left, font= \scriptsize]{$f^\bullet$} (v5);
	
	\draw[->]  (v2) edge node[below, font= \scriptsize]{$x$} (v3);
	\draw[->, bend right = 50, dashed]  (v2) edge node[right, font= \scriptsize]{$\exists y$} (v3);
	
	\node (v6) at (-1.2,2) {};
	\node (v7) at (-0.5,1.4) {};
	\draw[->]  (v6) edge[double] node[above, font= \scriptsize]{$f$} (v7);
	\end{tikzpicture}
\end{center}
We have not drawn the full universal property in order to not overload the picture. It seems from the diagram that the equipment property is some sort of fibrancy condition (we have no idea in what sense though). 

The equipment property is analogous to the one for double categories presented in the previous section. If we postulate it for $x \in \mathbb{E}_1$ we have a universal filler for every niche in solid arrows
\begin{center}
	\begin{tikzpicture}
	
	\node (v1) at (-3,3) {$x_0$};
	\node (v2) at (-1.5,3) {$x_1$};
	\node (v3) at (-3,1.5) {$y_0$};
	\node (v4) at (-1.5,1.5) {$y_1$};
	\draw[->]  (v1) edge node[above, font = \scriptsize]{$x$} (v2);
	\draw[->]  (v1) edge node[left, font = \scriptsize]{$f^0$} (v3);
	\draw[->]  (v2) edge node[right, font = \scriptsize]{$f^1$} (v4);
	\draw[->, dashed]  (v3)  edge (v4);
	\end{tikzpicture}
\end{center}

However we can entail more. Assume we are given $x \in \mathbb{E}_n$ and some morphisms $f_i: x_i \rightarrow y_i$ in $\mathbb{E}_0$.
\begin{center}
	\begin{tikzpicture}

	\node (v1) at (-4,2.5) {$x_0$};
	\node (v2) at (-2.5,3.5) {$x_1$};
	\node (v3) at (-1,2.5) {$x_2$};
	\node (v4) at (-4,0.5) {$y_0$};
	\node (v5) at (-2.5,1.5) {$y_1$};
	\node (v6) at (-1,0.5) {$y_2$};
	\draw[->]  (v1) edge (v2);
	\draw[->]  (v2) edge (v3);
	\draw[->]  (v1) edge (v3);
	\draw[->]  (v1) edge node[left, font= \scriptsize]{$f_0$} (v4);
	\draw[->]  (v2) edge node[left, font= \scriptsize]{$f_1$} (v5);
	\draw[->]  (v3) edge node[right, font= \scriptsize]{$f_2$} (v6);
	\end{tikzpicture}
\end{center}
Then we may apply the equipment property to fill in the $1$-simplicies in the bottom first, then the $2$-simplicies and so on to construct a universal filler $y$ with a map $f: x \rightarrow y$ such that $v_if = f_i$
\begin{center}
	\begin{tikzpicture}

	\node (v1) at (-4,2.5) {$x_0$};
	\node (v2) at (-2.5,3.5) {$x_1$};
	\node (v3) at (-1,2.5) {$x_2$};
	\node (v4) at (-4,0.5) {$y_0$};
	\node (v5) at (-2.5,1.5) {$y_1$};
	\node (v6) at (-1,0.5) {$y_2$};
	\draw[->]  (v1) edge (v2);
	\draw[->]  (v2) edge (v3);
	\draw[->]  (v1) edge (v3);
	\draw[->]  (v1) edge node[left, font= \scriptsize]{$f_0$} (v4);
	\draw[->]  (v2) edge node[left, font= \scriptsize]{$f_1$} (v5);
	\draw[->]  (v3) edge node[right, font= \scriptsize]{$f_2$} (v6);
	\draw[->, dashed]  (v4) edge (v5);
	\draw[->, dashed]  (v5) edge (v6);
	\draw[->, dashed]  (v4) edge (v6);
	\end{tikzpicture}
\end{center}
Similarly we may obtain universal fillers for other configurations of maps from faces of $x$ which are coherent (agree on subfaces). We make this observation precise in the following proposition.

\begin{proposition}[Strong equipment property]
	\label{strong}
	Suppose $\mathbb{E}$ is an equipment and $x \in \mathbb{E}_n$ an $n$-simplex. Let $A_\alpha$, for $\alpha \in A$, be a family of subsets of $[n] = \{ 0, 1, \dots ,n\}$ that cover $[n]$. Denote $x_\alpha$ be the subsimplex of $x$ with vertices $\{x_i | i\in A_\alpha\}$, $A_{\alpha\beta} = A_\alpha \cap A_\beta$ and the corresponding subsimplex $x_{\alpha\beta}$. Then given a family of morphisms in $E$
	$$f_\alpha : x_\alpha \rightarrow y_\alpha$$
	such that for all $\alpha, \beta \in A$ the restrictions of $f_\alpha$ and $f_\beta$ on $x_{\alpha\beta}$ agree, there is an object $y \in \mathbb{E}_n$ and a morphism $f: x \rightarrow y$ such that:
	\begin{itemize}
		\item[i)] $y_\alpha$ is the subsimplex of $y$ with vertices $\{y_i | i\in A_\alpha\}$ and the restriction of $f$ to $x_\alpha$ is $f_\alpha$
		\item[ii)] Any map $x \rightarrow z$ in $\mathbb{E}_n$ whose restrictions to $x_\alpha$ factor through $f_\alpha$ factors uniquely through $f$
	\end{itemize}
\end{proposition}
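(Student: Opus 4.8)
The plan is to reduce the strong equipment property to the basic equipment property by induction on the skeleta of the covering, building the universal filler one simplex at a time. First I would reformulate the data: the covering $\{A_\alpha\}$ together with the maps $f_\alpha$ defines, for each subset $B\subseteq[n]$, a morphism $f_B$ out of the subsimplex $x_B$ whenever $B\subseteq A_\alpha$ for some $\alpha$ (take any such $\alpha$; the compatibility on overlaps guarantees this is well-defined), and these maps $f_B$ are themselves compatible under passing to sub-subsimplices. Replacing $\{A_\alpha\}$ by the downward closure of the $A_\alpha$'s inside the poset of subsets of $[n]$, we may as well assume the index set is a downward-closed family $\mathcal{B}$ of subsets of $[n]$, with a compatible system of morphisms $f_B : x_B \to y_B$ for $B\in\mathcal{B}$, and we want to extend this coherently to a universal $f:x\to y$. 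Since the $A_\alpha$ cover $[n]$, every singleton $\{i\}$ lies in $\mathcal{B}$.

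The core of the argument is an induction on the cardinality $k$ of subsets not in $\mathcal{B}$, or more cleanly: well-order the finitely many subsets $C\subseteq[n]$ with $C\notin\mathcal{B}$ by increasing cardinality (breaking ties arbitrarily), and adjoin them to $\mathcal{B}$ one at a time, at each stage producing a compatible family on the enlarged downward-closed-up-to-$C$ collection. When we reach a subset $C\notin\mathcal{B}$ of size $m+1$, all proper subsets of $C$ are already handled (they have smaller cardinality, and if one were not in $\mathcal{B}$ it was treated at an earlier stage of the induction because we ordered by cardinality). The compatible maps $f_D : x_D\to y_D$ for $D\subsetneq C$ assemble, via Proposition (the one citing \cite[p.~11]{goerss2009simplicial}) applied inside $\mathbb{E}_m$ to the subsimplex $x_C\in\mathbb{E}_m$, into a single map $f^\bullet:\partial x_C \to y^\bullet$ from the boundary of $x_C$. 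The basic equipment property then furnishes a universal filler $f_C : x_C \to y_C$ with $\partial f_C = f^\bullet$. Iterating until every subset of $[n]$ has been treated, the subset $[n]$ itself yields the desired $f:x\to y$, and part (i) holds by construction since the restriction of $f$ to each $x_\alpha$ was forced to be $f_\alpha$ at the stage when $A_\alpha$ (or rather all its subsets) got processed.

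For the universality statement (ii), suppose $h:x\to z$ in $\mathbb{E}_n$ restricts on each $x_\alpha$ to a map factoring as $g_\alpha f_\alpha$ for some $g_\alpha : y_\alpha \to z_\alpha$ where $z_\alpha$ is the corresponding subsimplex of $z$. I would again argue by the same induction: at each stage, the $g$'s defined so far on proper subsets of $C$ glue (by the uniqueness clause of the basic equipment property, which forces them to agree on overlaps) to a map out of $\partial y_C$, and then the universal property of the filler $f_C$ produces the unique $g_C : y_C \to z_C$ with $h|_{x_C} = g_C f_C$ and the prescribed faces; uniqueness at each stage propagates to global uniqueness of $g:y\to z$. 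The main obstacle I anticipate is purely bookkeeping: verifying that the boundary data $f^\bullet$ assembled at each stage genuinely satisfies the cocycle condition $d_i f^j = d_{j-1} f^i$ required by the Goerss--Jardine proposition, i.e. that the earlier-constructed fillers are mutually compatible on codimension-two faces. This follows from the uniqueness half of the equipment property applied one dimension down, but it must be tracked carefully through the induction; I would isolate it as the one genuinely substantive lemma and treat everything else as routine.
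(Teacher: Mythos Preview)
Your proposal is correct and follows essentially the same strategy as the paper: reduce the strong statement to repeated applications of the basic equipment property by a skeletal induction, filling in the missing subsimplices in order of increasing dimension. The paper organizes this slightly differently---it runs the outer induction on the ambient dimension $n$ and isolates a lemma that handles the step ``universal filler on the last face $d_n x$ plus a map on the last vertex $x_n$ yields a universal filler on $x$''---whereas you keep $n$ fixed and enumerate the missing subsets $C\subseteq[n]$ by cardinality; but the content is the same, and the bookkeeping obstacle you flag (compatibility of the assembled boundary data on codimension-two faces) is exactly the point the paper's lemma absorbs.
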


The proof is not difficult but we isolate the following lemma:

\begin{lemma}
	Let $\mathbb{E}$ be an equipment and $x \in \mathbb{E}_n$ be an object. Given morphisms $f^n : d_nx \rightarrow y^i$ in $\mathbb{E}_{n-1}$ and $f_n: x_n \rightarrow y_n$ in $\mathbb{E}_0$ then there is a object $y \in \mathbb{E}_n$ and a map $f: x \rightarrow y$ such that:
	\begin{itemize}
		\item[i)] $d_nf = f^n$ and $v_nf = f_n$
		\item[ii)] Any morphism $h: x \rightarrow z$ such that $d_nh$ factors through $f^n$ and $v_nf = f_n$ factors though $f$
	\end{itemize} 
\end{lemma}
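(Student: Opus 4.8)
The plan is to deduce the Lemma from the (ordinary) equipment property of the Definition by completing the given partial boundary datum $(f^n,f_n)$ — a morphism on the single face $d_nx$ together with a morphism on the one vertex $x_n$ not lying on that face — to a full coherent family $(f^0,\dots,f^n)$ of morphisms out of the faces of $x$, and then filling. Since the Lemma already presupposes $\mathbb{E}_{n-1}$ and $d_nx$, one has $n\ge 1$, and the completion is built by induction on $n$.

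I would handle the base case $n=1$ directly: here $d_1x=x_0$, $d_0x=x_1$, the coherence conditions required by the equipment property are vacuous (they would involve $\mathbb{E}_{-1}$), and taking $y^0:=y_1$, $f^0:=f_1\colon d_0x\to y_1$ and applying the equipment property to the family $\{f^0,f^1\}$ produces $y\in\mathbb{E}_1$ and $f\colon x\to y$ with $d_1f=f^1=f^n$ and $d_0f=f^0=f_1$; since $v_1f=d_0f$ this gives $v_1f=f_n$, and the required universal property is exactly the one returned by the equipment property.

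For the inductive step, assuming the Lemma at level $n-1$, I would construct for each $i<n$ a morphism $f^i\colon d_ix\to y^i$ as follows. The $(n-1)$-simplex $d_ix$ has top face $d_{n-1}(d_ix)=d_i(d_nx)$ and top vertex $(d_ix)_{n-1}=x_n$ (by the simplicial identities, valid since $i<n$), so I can feed the Lemma at level $n-1$ the datum $d_if^n\colon d_i(d_nx)\to d_iy^n$ on this top face and $f_n\colon x_n\to y_n$ on this top vertex; the output is an object $y^i$, a map $f^i\colon d_ix\to y^i$ with $d_{n-1}f^i=d_if^n$ and $v_{n-1}f^i=f_n$, and a universal property. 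One then checks that $(f^0,\dots,f^n)$ is coherent, i.e.\ $d_if^j=d_{j-1}f^i$ for $i<j$: the case $j=n$ is exactly the identity $d_{n-1}f^i=d_if^n$ built into the construction, while the case $i<j<n$ follows by comparing, via the simplicial identities, the top faces and top vertices of the two maps $d_if^j$ and $d_{j-1}f^i$ out of $d_id_jx=d_{j-1}d_ix$ and invoking the uniqueness clause of the Lemma at level $n-2$. Applying the equipment property to the coherent family $\{f^i\}_{i=0}^n$ yields $y\in\mathbb{E}_n$ and $f\colon x\to y$ with $d_iy=y^i$ and $d_if=f^i$, so in particular $d_nf=f^n$, and $v_nf=v_{n-1}(d_0f)=v_{n-1}f^0=f_n$, giving (i). For (ii), given $h\colon x\to z$ with $d_nh$ and $v_nh$ factoring through $f^n$ and $f_n$, I would push these factorizations through the universal property of each $f^i$ ($i<n$) applied to $d_ih\colon d_ix\to d_iz$ — whose top face $d_{n-1}(d_ih)=d_i(d_nh)$ factors through $d_if^n$ and whose top vertex $v_{n-1}(d_ih)=v_nh$ factors through $f_n$ — to obtain morphisms $g^i\colon y^i\to d_iz$ with $d_ih=g^if^i$; then the universal clause of the equipment property supplies the desired unique $g\colon y\to z$ with $h=gf$, its uniqueness coming from that of the equipment property together with that of the $g^i$.

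The step I expect to be the main obstacle is the coherence claim $d_if^j=d_{j-1}f^i$ for $i<j<n$, together with the accompanying identification of the codomains $d_iy^j$ with $d_{j-1}y^i$: the fillers handed back by the inductive hypothesis are not manifestly functorial in the boundary datum, so this identity must be extracted from the uniqueness parts of the relevant universal properties and — exactly as with the simplicial identities for $\textbf{Cat}^\sharp$ earlier in the paper — should be read up to the appropriate coherent isomorphism rather than on the nose.
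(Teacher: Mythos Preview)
Your proposal is correct and follows essentially the same strategy as the paper: complete the partial boundary datum $(f^n,f_n)$ to a full coherent family on $\partial x$ by repeated use of the equipment property in lower dimensions, then apply the equipment property once more at level $n$. The only difference is organizational. The paper fills in the missing boundary \emph{dimension by dimension}---first the $1$-simplices joining each $y_i$ (already given as $v_i y^n$) to $y_n$, then the resulting $2$-simplex boundaries, and so on up to $(n-1)$-simplices---whereas you fill in \emph{face by face} via induction on $n$, producing each $f^i\colon d_ix\to y^i$ for $i<n$ in one stroke from the inductive hypothesis. Both unwind to the same sequence of applications of the equipment property; your version has the advantage of making the universal property (ii) and the coherence checks explicit, which the paper leaves to the reader. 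Your caveat about coherence holding only up to coherent isomorphism is well taken and applies equally to the paper's argument.
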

\begin{proof}
	We construct $1$-simplicies joining $y_i$ with $y_n$ using the equipment property
	\begin{center}
		\begin{tikzpicture}
		
		\node (v1) at (-3,3) {$x_i$};
		\node (v2) at (-1.5,3) {$x_n$};
		\node (v3) at (-3,1.5) {$y_i$};
		\node (v4) at (-1.5,1.5) {$y_n$};
		\draw[->]  (v1) edge (v2);
		\draw[->]  (v1) edge node[left, font = \scriptsize]{$v_if^n$} (v3);
		\draw[->]  (v2) edge node[right, font = \scriptsize]{$f^n$} (v4);
		\draw[->, dashed]  (v3)  edge (v4);
		\end{tikzpicture}
	\end{center}
	This way we create boundaries of $2$-simplicies so that we use the equipment property until we have a map from $\partial x$. Then the equipment property gives the desired simplex $y$ satisfying the universal property. 
\end{proof}

\begin{proof}(of the proposition)
	We proceed by induction on $n$. If $n=0$ there is nothing to do. For $n>0$ use the induction hypothesis to construct a universal map $f^n : d_nx \rightarrow y^n$ and then use the lemma to complete the construction of the desired $y$. 
\end{proof}

We will denote universal constructions such as the above $x(f^\bullet)$, $x(f^1, \dots, f^n)$, $x(f_\alpha)$ or $x(f_1,\dots f_n)$ in trust that it is clear from the context which universal extension is being talked about.

\subsection{The companion construction for a vertical $n$-simplex}
\label{companion}

For a simplicial category $\mathbb{E}$ and an object $x \in \mathbb{E}_0$ we denote by $s^nx$ the image of $x$ under the map 
$$\mathbb{E}_0 \xrightarrow{s_0} \mathbb{E}_1 \xrightarrow{s_0} \dots \xrightarrow{s_0} \mathbb{E}_n$$

Let $\mathbb{E}$ be a higher equipment and let 
$$\sigma = x_0 \xrightarrow{f_1} x_1 \xrightarrow{f_2} \dots \xrightarrow{f_n} x_n$$
be an $n$-simplex in the category $E_0$. We define a companion simplex  $\sigma^* \in E_n$ together with a morphism $\phi_\sigma : s^nx_0 \rightarrow \sigma^*$ via the following recursion
$$\sigma^*= \begin{cases}
\sigma &\text{if} \ \  n=0 \\
s^nx_0(\phi_{d_n\sigma}, \dots, \phi_{d_0\sigma}) &\text{if} \ \ n>0
\end{cases}$$
We define $\phi_\sigma = 1_\sigma $ for $\sigma \in E_0$ and we define $\phi_\sigma$ to be the induced map $s^nx_0 \rightarrow s^nx_0(\phi_{d_n\sigma}, \dots, \phi_{d_0\sigma})$.

As mysterious as this recursion might look it has a pretty basic idea behind it. If $x \in \mathbb{E}_0$ is just an object then its companion in $\mathbb{E}_0$ is just itself. If $f: x_0 \rightarrow x_1$ is a $1$-simplex in $\mathbb{E}_0$ we obtain the companion $f^* \in \mathbb{E}_1$ as a universal filler  
\begin{center}
	\begin{tikzpicture}
	
	\node (v1) at (-3,3) {$x_0$};
	\node (v2) at (-1.5,3) {$x_0$};
	\node (v3) at (-3,1.5) {$x_0$};
	\node (v4) at (-1.5,1.5) {$x_1$};
	\draw  (v1) edge[double] node[above, font = \scriptsize]{$s_0x_0$} (v2);
	\draw[->]  (v1) edge node[left, font = \scriptsize]{$1_{x_0}$} (v3);
	\draw[->]  (v2) edge node[right, font = \scriptsize]{$f$} (v4);
	\draw[->, dashed]  (v3)  edge node[above, font = \scriptsize]{$f^*$} (v4);
	\end{tikzpicture}
\end{center}
We automatically obtain the map $\phi_f : s_0x_0 \rightarrow f^*$. As we may intuit from the examples the degeneracy $s_0x_0$ represents the cylinder of $x_0$ and extending the cylinder along $f: x_0 \rightarrow x_1$ universally to form $f^*$ represents the formation of the mapping cylinder of $f$. 

If we had a $2$-simplex $\sigma : x_0 \xrightarrow{f} x_1 \xrightarrow{g} x_2$ we would first form the companions of $f$, $g$ and $gf$ as above and then form the universal extension 
\begin{center}
	\begin{tikzpicture}

	\node (v1) at (-4,2.5) {$x_0$};
	\node (v2) at (-2.5,3.5) {$x_0$};
	\node (v3) at (-1,2.5) {$x_0$};
	\node (v4) at (-4,0.5) {$x_0$};
	\node (v5) at (-2.5,1.5) {$x_1$};
	\node (v6) at (-1,0.5) {$x_2$};
	\draw[-]  (v1) edge[double] (v2);
	\draw[-]  (v2) edge[double] (v3);
	\draw[-]  (v1) edge[double]  (v3);
	\draw[->]  (v1) edge node[left, font= \scriptsize]{$1_{X_0}$} (v4);
	\draw[->]  (v2) edge node[left, font= \scriptsize]{$g$} (v5);
	\draw[->]  (v3) edge node[right, font= \scriptsize]{$gf$} (v6);
	\draw[->]  (v4) edge node[above, font = \scriptsize]{$f^*$} (v5);
	\draw[->]  (v5) edge node[above, font = \scriptsize]{$g^*$} (v6);
	\draw[->]  (v4) edge node[below, font = \scriptsize]{$(gf)^*$} (v6);
	\node at (-2.5,0.9) {$\sigma^*$};
	\end{tikzpicture}
\end{center}
Again we obtain the induced map $\phi_\sigma$. We also observe that our recursion step is well defined, in the sense that it is guaranteed by the recursion itself that the morphisms $\phi_{d_i\sigma}$ agree on subfaces.

$\sigma^*$ may be thought as representing the mapping cylinder of $\sigma$, or what we may call \textit{higher mapping cylinder}. We will make this precise in a little bit. 

The companion construction 
$$\sigma \mapsto \sigma^*$$
associates to an $n$-simplex of $\mathbb{E}_0$ an object of $\mathbb{E}_n$. Since the latter are horizontal $n$-simplicies of $E$ we expect this construction to define a transformation
$$(\cdot)^* : \mathbb{E}_0 \rightarrow \mathbb{E}$$
First observe that it follows from the recursion that 
$$d_i\sigma^* = (d_i\sigma)^*$$
However the axioms do not seem to guarantee $s_i\sigma^* = (s_i\sigma)^*$. Nonetheless they guarantee the next best thing which is a comparison map 
$$\alpha_i : (s_i\sigma)^* \rightarrow s_i\sigma^*$$
We may construct $\alpha_i$ recursively. For an object $x \in \mathbb{E}_0$ we have $x^* = x$ and clearly $s_0x^* = (s_0x)^*$ so we choose $\alpha_0$ in this case to be the identity map. For $\sigma$ an $n$-simplex as above we let $\alpha_i$ be the map induced by the equipment property according to the diagram:
\begin{center}
	\begin{tikzpicture}
	
	\node (v1) at (-3,3) {$s^{n+1}x_0$};
	\node (v2) at (0,3) {$s_i\sigma^*$};
	\node (v3) at (-3,1) {$(s_i\sigma)^*$};
	\draw[->]  (v1) edge node[above, font= \scriptsize]{$s_i\phi_\sigma$} (v2);
	\draw[->] (v1) edge node[left, font= \scriptsize]{$\phi_{s_i\sigma}$} (v3);
	\draw[->, dashed]  (v3) edge node[below, font= \scriptsize]{$\exists \alpha_i$} (v2);
	\end{tikzpicture}
\end{center}
What the recursion guarantees is that the faces of $s_i\phi_\sigma$ factor through the faces of $\phi_{s_i\sigma}$ so that we can use the universal property. Hence we obtain an oplax transformation $(\cdot)^*$.

We conclude with an important observation. Let us illustrate with an example, say 
$$\sigma : x_0 \xrightarrow{f_1} x_1 \xrightarrow{f_2} x_2 $$
is a $2$-simplex in $E_0$. Then we can think of $\sigma^*$ as produced by two steps of universal extensions. Given the tower 
\begin{center}
	\begin{tikzpicture}

	\node (v1) at (-3.5,2) {$x_0$};
	\node (v2) at (-2.5,3) {$x_0$};
	\node (v3) at (-1.5,2) {$x_0$};
	\node (v6) at (-3.5,0) {$x_0$};
	\node (v4) at (-2.5,1) {$x_1$};
	\node (v5) at (-1.5,0) {$x_1$};
	\node (v7) at (-3.5,-2) {$x_0$};
	\node (v8) at (-2.5,-1) {$x_1$};
	\node (v9) at (-1.5,-2) {$x_2$};
	
	\draw  (v1) edge[double] (v2);
	\draw  (v2) edge[double] (v3);
	\draw  (v1) edge[double] (v3);
	
	\draw  (v4) edge[double] (v5);

	\draw[->]  (v1) edge node[left, font= \scriptsize]{$1_{x_0}$} (v6);
	\draw[->]  (v2) edge node[left, font= \scriptsize]{$f_1$} (v4);
	\draw[->]  (v3) edge node[right, font= \scriptsize]{$f_1$} (v5);
	\draw[->]  (v6) edge node[left, font= \scriptsize]{$1_{x_0}$} (v7);
	\draw[->]  (v4) edge node[left, font= \scriptsize]{$1_{x_1}$} (v8);
	\draw[->]  (v5) edge node[right, font= \scriptsize]{$f_2$} (v9);
	\end{tikzpicture}
\end{center}
we first construct $s^3x_0(1_{x_0}, s_0f_1) = (s_1f_1)^*$
\begin{center}
	\begin{tikzpicture}

	\node (v1) at (-3.5,2) {$x_0$};
	\node (v2) at (-2.5,3) {$x_0$};
	\node (v3) at (-1.5,2) {$x_0$};
	\node (v6) at (-3.5,0) {$x_0$};
	\node (v4) at (-2.5,1) {$x_1$};
	\node (v5) at (-1.5,0) {$x_1$};

	\draw  (v1) edge[double] (v2);
	\draw  (v2) edge[double] (v3);
	\draw  (v1) edge[double] (v3);
	
	\draw  (v4) edge[double] (v5);

	\draw[->]  (v1) edge node[left, font= \scriptsize]{$1_{x_0}$} (v6);
	\draw[->]  (v2) edge node[left, font= \scriptsize]{$f_1$} (v4);
	\draw[->]  (v3) edge node[right, font= \scriptsize]{$f_1$} (v5);
	\draw[->,dashed] (v6) edge (v4);
	\draw[->,dashed] (v6) edge (v5);
	\end{tikzpicture}
\end{center}
and then $(s_1f_1)(1_{x_0}, 1_{x_1}, f_2)$
\begin{center}
	\begin{tikzpicture}

	\node (v1) at (-3.5,2) {$x_0$};
	\node (v2) at (-2.5,3) {$x_1$};
	\node (v3) at (-1.5,2) {$x_1$};
	\node (v6) at (-3.5,0) {$x_0$};
	\node (v4) at (-2.5,1) {$x_1$};
	\node (v5) at (-1.5,0) {$x_2$};

	\draw[->]   (v1) edge (v2);
	\draw  (v2) edge[double] (v3);
	\draw [->]  (v1) edge (v3);
	
	\draw[->,dashed]   (v4) edge (v5);

	\draw[->]  (v1) edge node[left, font= \scriptsize]{$1_{x_0}$} (v6);
	\draw[->]  (v2) edge node[left, font= \scriptsize]{$1_{x_1}$} (v4);
	\draw[->]  (v3) edge node[right, font= \scriptsize]{$f_2$} (v5);
	\draw[->,dashed] (v6) edge (v4);
	\draw[->,dashed] (v6) edge (v5);
	\end{tikzpicture}
\end{center}
This is true because of this trivial lemma:
\begin{lemma}
	The composition of universal extensions is universal. 
\end{lemma}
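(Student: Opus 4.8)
The plan is to run the standard ``universal arrows compose'' argument in the setting of the (strong) equipment property of Proposition~\ref{strong}. Suppose $f\colon x\to y$ exhibits $y$ as a universal extension of $x\in\mathbb{E}_n$, say with prescribed restriction data $f_\gamma\colon x_\gamma\to y_\gamma$ on a cover $\{A_\gamma\}$ of $[n]$, and $g\colon y\to z$ exhibits $z$ as a universal extension of $y$ with prescribed data $g_\gamma\colon y_\gamma\to z_\gamma$. I claim that $g\circ f\colon x\to z$ is again a universal extension, whose prescribed restriction to each relevant subsimplex $x_\gamma$ is the composite $g_\gamma\circ f_\gamma$; since every restriction map $\mathbb{E}_n\to\mathbb{E}_m$ (induced by a face inclusion $\Delta^m\hookrightarrow\Delta^n$) is a functor, $gf$ does restrict to $g_\gamma f_\gamma$, so it genuinely fills the asserted configuration.

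The core is the universal property, which I would verify by a two-step factorization. Given any $h\colon x\to w$ in $\mathbb{E}_n$ whose restriction to each $x_\gamma$ factors as $\beta_\gamma\circ(g_\gamma f_\gamma)$, rewrite this as $h|_{x_\gamma}=(\beta_\gamma g_\gamma)\circ f_\gamma$; by universality of $f$ there is a unique $h'\colon y\to w$ with $h=h'f$ and $h'|_{y_\gamma}=\beta_\gamma g_\gamma$. Now each $h'|_{y_\gamma}$ factors through $g_\gamma$, so by universality of $g$ there is a unique $h''\colon z\to w$ with $h'=h''g$ and $h''|_{z_\gamma}=\beta_\gamma$, whence $h=h''(gf)$ with the prescribed restrictions. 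For uniqueness: if $m\colon z\to w$ satisfies $h=m(gf)$ and $m|_{z_\gamma}=\beta_\gamma$, then $mg$ is a factorization of $h$ through $f$ with $(mg)|_{y_\gamma}=\beta_\gamma g_\gamma$, so $mg=h'$ by the uniqueness clause for $f$, and then $m=h''$ by the uniqueness clause for $g$.

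The only point needing care --- and it is mild, which is why the lemma can fairly be called trivial --- is the bookkeeping of prescribed restrictions: the equipment universal property does not merely supply a factorizing map but also fixes its restrictions to the subsimplices, and it is precisely this rigidity that makes the two uniqueness statements chain. A subsidiary matter is aligning the two configurations of subsimplices used by the two extensions: in the application (the companion recursion of Section~\ref{companion}) the first extension fills a family of faces and the second fills the complementary structure, so one simply takes the combined configuration and the argument above applies verbatim, with no appeal to any refinement statement.
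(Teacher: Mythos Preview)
Your argument is correct and is precisely the standard ``universal arrows compose'' verification that the paper has in mind; the paper's own proof is the single sentence ``This follows directly from the definition of universality,'' and you have simply unpacked that sentence in full detail.
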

\begin{proof}
	This follows directly from the definition of universality. 
\end{proof}
In general we have the following proposition:
\begin{proposition}[Tower representation]
	\label{tower}
	Let $\mathbb{E}$ be a higher equipment and $\sigma$ be an $n$-simplex as above. Then we have 
	$$\sigma^* = s^nx_0(1_{x_0}, s^{n-1}f_1)(1_{x_0}, 1_{x_1}, s^{n-2}f_2) \dots (1_{x_0}, \dots, 1_{x_{n-1}}, f_n)$$
\end{proposition}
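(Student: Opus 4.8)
The plan is to induct on $n$, the three inputs being the recursive definition of $\sigma^*$ from \ref{companion}, the Lemma that a composite of universal extensions is again a universal extension, and the uniqueness clause built into the strong equipment property (Proposition \ref{strong}): a universal extension of a fixed simplex along fixed data is essentially unique, so any two such agree up to the canonical comparison isomorphism, which throughout this section is treated as an identity. Write $T(\sigma)$ for the right-hand side $s^nx_0(1_{x_0}, s^{n-1}f_1)(1_{x_0}, 1_{x_1}, s^{n-2}f_2)\cdots(1_{x_0},\dots,1_{x_{n-1}}, f_n)$; the goal is $T(\sigma)=\sigma^*$. The case $n=0$ is the assertion that an empty tower on $x_0$ returns $\sigma=x_0=\sigma^*$, and $n=1$ is literally the niche that was used to define $f_1^*$, so there is nothing to do there.

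For the inductive step, first note that $T(\sigma)$ is a universal extension of $s^nx_0$: each factor $(1_{x_0},\dots,1_{x_{k-1}}, s^{n-k}f_k)$ is a universal extension by the strong equipment property, and the composite of universal extensions is universal by the Lemma. It then suffices to identify this universal extension with the one the recursion produces, and since the recursion pins $\sigma^*$ down through its faces (via $d_i\sigma^*=(d_i\sigma)^*$ and the face restrictions of $\phi_\sigma$), I would compute the faces of $T(\sigma)$. Tracking the vertices through the tower, before step $k$ the ambient simplex has vertices $x_0,\dots,x_{k-1}$ followed by copies of $x_{k-1}$, and step $k$ advances the tail simplex on $\{k,\dots,n\}$ from $s^{n-k}x_{k-1}$ to $s^{n-k}x_k$ along $s^{n-k}f_k$. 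Applying $d_i$ and the simplicial identities $d_is_j=s_{j-1}d_i$ (etc.), the steps reorganize precisely into the tower of $d_i\sigma$: for $0<i<n$ the two consecutive steps $k=i$ and $k=i+1$ fuse, by the Lemma, into a single tail-advancing extension along $s^{n-1-i}(f_{i+1}f_i)$; for $i=n$ the last step becomes a trivial (identity) extension and is dropped; for $i=0$ the first step collapses to $d_0(s^nx_0)=s^{n-1}x_0\xrightarrow{\,s^{n-1}f_1\,}s^{n-1}x_1$, which is exactly the starting simplex of $T(d_0\sigma)$. Hence $d_iT(\sigma)=T(d_i\sigma)$, and the identical bookkeeping applied to the composite extension map shows the map $s^nx_0\to T(\sigma)$ restricts on the $i$-th face to the extension map of $T(d_i\sigma)$.

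By the induction hypothesis $T(d_i\sigma)=(d_i\sigma)^*$ with matching extension map $\phi_{d_i\sigma}$, so $T(\sigma)$ is a universal extension of $s^nx_0$ whose faces are the $(d_i\sigma)^*$ with face maps $\phi_{d_i\sigma}$ — exactly the data that the recursion uses to define $\sigma^*$. Feeding $\phi_\sigma$ into the universal property of $T(\sigma)$ and $\phi_{T(\sigma)}$ into the universal property of $\sigma^*$ yields comparison maps $T(\sigma)\to\sigma^*$ and $\sigma^*\to T(\sigma)$ compatible with the respective extension maps; the uniqueness clauses force their composites to be identities, so $T(\sigma)=\sigma^*$, closing the induction.

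I expect the only real obstacle to be the combinatorial bookkeeping in the face identity $d_iT(\sigma)=T(d_i\sigma)$: one has to be careful about how $d_i$ commutes with the degeneracies $s^{n-k}$, about the boundary cases $i\in\{0,n\}$, and about the fact that several of these ``equalities'' — like $d_1s_0\cong 1$ and the oplax comparisons $\alpha_i\colon(s_i\sigma)^*\to s_i\sigma^*$ flagged in \ref{companion} — hold only up to the canonical coherence isomorphisms, so some care is needed to check that everything is coherent and that nothing beyond those isomorphisms is being swept under the rug. Past that point the argument is a routine combination of the Lemma and the definition of universality.
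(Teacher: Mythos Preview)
Your proposal is correct and follows essentially the same approach as the paper: the paper's proof is a terse two lines invoking exactly the two ingredients you use --- ``faces of companions are companions'' (i.e.\ $d_i\sigma^*=(d_i\sigma)^*$) and the Lemma that composites of universal extensions are universal --- and you have simply written out the induction and the face bookkeeping that those two facts implicitly entail. Your careful handling of the boundary cases $i\in\{0,n\}$ and the coherence caveats goes well beyond what the paper records, but the underlying argument is the same.
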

\begin{proof}
	The proposition follows from the fact that faces of companions are companions mentioned above and the lemma. 
\end{proof}

Dually we can define the cojoint $\sigma_*$ of an $n$-simplex $\sigma$ in $\mathbb{E}_0$. Let $\sigma_*$ be equipped with a structure morphism $\psi_\sigma: s^nx_0 \rightarrow \sigma_*$ and be given recursively by
$$\sigma_*= \begin{cases}
\sigma &\text{if} \ \  n=0 \\
s^nx_0(\psi_{d_0\sigma}, \dots, \psi_{d_n\sigma}) &\text{if} \ \ n>0
\end{cases}$$
Again let $\psi_\sigma = 1_\sigma $ for $\sigma \in \mathbb{E}_0$ and we define $\psi_\sigma$ to be the induced map $s^nx_0 \rightarrow s^nx_0(\phi_{d_0\sigma}, \dots, \phi_{d_n\sigma})$.

For example for a $1$-simplex $f: x_0 \rightarrow x_1$ we obtain the cojoint $f_*$ as the universal extension 
\begin{center}
	\begin{tikzpicture}
	
	\node (v1) at (-3,3) {$x_0$};
	\node (v2) at (-1.5,3) {$x_0$};
	\node (v3) at (-3,1.5) {$x_1$};
	\node (v4) at (-1.5,1.5) {$x_0$};
	\draw  (v1) edge[double] node[above, font = \scriptsize]{$s_0x_0$} (v2);
	\draw[->]  (v1) edge node[left, font = \scriptsize]{$f$} (v3);
	\draw[->]  (v2) edge node[right, font = \scriptsize]{$1_{x_0}$} (v4);
	\draw[->, dashed]  (v3)  edge node[above, font = \scriptsize]{$f_*$} (v4);
	\end{tikzpicture}
\end{center}
All of the above statements have their duals for cojoints.

\begin{note}
	In the examples it seems that the companion construction preserves  degeneracies up to isomorphism so that we have more than a mere comparison map. However the axioms seem to imply simply a lax transformation. We leave the investigation of the conditions that would guarantee invertible comparison maps for future work, as this does not influence what comes next.
\end{note}

\subsection{Double colimits in a simplicial category}
\label{dcolim}

Let $\mathbb{E}$ be a simplicial category. In analogy with double category theory we are interested in defining the double colimit of a horizontal diagram of shape $\mathcal{J}$, where $\mathbb{J}$ is some indexing category. As pointed out previously, while in double categories we have a horizontal 2-category in simplicial categories we have a simplicial structure in the horizontal direction. So for our purposes a horizontal diagram in $E$ will be a transformation $$F: \mathcal{J} \rightarrow E$$
$F$ will assign an object in $\mathbb{E}_n$ to each $n$-simplex of $\mathcal{J}$. 

Because of the observations in the previous section we are also interested in the case where $F$ is oplax. This means for any $n$-simplex $\sigma$ in $\mathcal{J}$ and any map $\theta : [m] \rightarrow [n]$ in $\Delta$ we have a structure map in $\mathbb{E}_m$
$$\psi_\theta : F(\theta(\sigma)) \rightarrow \theta(F(\sigma))$$
satisfying coherence laws. In such case we may write $(F,\psi)$ instead of just $F$. 

\begin{definition}
	Let $(F,\psi)$ be as above. The double colimit of $F$ is an object in $\mathbb{E}_0$, denoted $\text{dcolim}F$, equipped with a structure map 
	$$\eta: F \Rightarrow \text{dcolim}F$$
	such that given any other map $\eta^\prime :F \Rightarrow x$ for some $x\in \mathbb{E}_0$ there is a unique morphism $f: \text{dcolim}F \rightarrow x$ in $E_0$ with $f\circ \eta = \eta^\prime$. 
	\begin{center}
		\begin{tikzpicture}
		
		\node (v1) at (-5,5) {$F$};
		\node (v2) at (-5,3.5) {$\text{dcolim}F$};
		\node (v3) at (-3,3.5) {$x$};
		\draw[->]  (v1) edge[double] node[left, font= \scriptsize]{$\eta$} (v2);
		\draw[->]  (v1) edge[double] node[above, font= \scriptsize]{$\eta^\prime$} (v3);
		\draw[->, dashed]  (v2) edge node[below, font = \scriptsize]{$\exists ! f$} (v3);
		\end{tikzpicture}
	\end{center}
\end{definition}

We have abused notation and denoted the constant functor $$\mathcal{J} \rightarrow \Delta^0 \xrightarrow{x} E$$ by simply $x$. The composition $f\circ \eta $ makes sense if we think of $f$ as a transformation between functors $\Delta^0 \rightarrow E$. 

In order to grasp this colimit better we distinguish a special case. Let $x \in \mathbb{E}_n$ be an object. For simplicity we draw a picture in $\mathbb{E}_2$
\begin{center}
	\begin{tikzpicture}

	\node (v1) at (-4,2.5) {$x_0$};
	\node (v2) at (-2.5,3.5) {$x_1$};
	\node (v3) at (-1,2.5) {$x_2$};
	\draw[->]  (v1) edge (v2);
	\draw[->]  (v2) edge (v3);
	\draw[->]  (v1) edge (v3);
	\end{tikzpicture}
\end{center}
We can think of $x$ as a diagram $\Delta^n \xrightarrow{x} \mathbb{E}$ and hence consider its double colimit. We will call this the \textit{cotabulator} of $x$ and denote it $\bot_x$ (this terminology and notation is borrowed from \cite{grandis1999limits}). $\bot_x$ comes equipped with a map $\eta : x \rightarrow s^{(n)}\bot_x$ in $\mathbb{E}_n$, 
which we can depict $\eta$ as a bisimplex 
\begin{center}
	\begin{tikzpicture}

	\node (v1) at (-4,2.5) {$x_0$};
	\node (v2) at (-2.5,3.5) {$x_1$};
	\node (v3) at (-1,2.5) {$x_2$};
	\node (v4) at (-4,0.5) {$\bot_x$};
	\node (v5) at (-2.5,1.5) {$\bot_x$};
	\node (v6) at (-1,0.5) {$\bot_x$};
	\draw[->]  (v1) edge (v2);
	\draw[->]  (v2) edge (v3);
	\draw[->]  (v1) edge (v3);
	\draw[->]  (v1) edge node[left, font= \scriptsize]{$\eta_0$} (v4);
	\draw[->]  (v2) edge node[left, font= \scriptsize]{$\eta_1$} (v5);
	\draw[->]  (v3) edge node[right, font= \scriptsize]{$\eta_2$} (v6);
	\draw[-]  (v4) edge[double] (v5);
	\draw[-]  (v5) edge[double] (v6);
	\draw[-]  (v4) edge[double] (v6);
	\end{tikzpicture}
\end{center}
or simply as 
\begin{center}
	\begin{tikzpicture}

	\node (v1) at (-4,2.5) {$x_0$};
	\node (v2) at (-2.5,3.5) {$x_1$};
	\node (v3) at (-1,2.5) {$x_2$};
	\node (v4) at (-2.5,0) {$\bot_x$};

	\draw[->]  (v1) edge (v2);
	\draw[->]  (v2) edge (v3);
	\draw[->]  (v1) edge (v3);
	\draw[->]  (v1) edge node[left, font= \scriptsize]{$\eta_0$} (v4);
	\draw[->]  (v2) edge node[left, font= \scriptsize]{$\eta_1$} (v4);
	\draw[->]  (v3) edge node[right, font= \scriptsize]{$\eta_2$} (v4);

	\end{tikzpicture}
\end{center}
depending on taste. The universal property then is understood as
\begin{center}
	\begin{tikzpicture}

	\node (v1) at (-4,2.5) {$x_0$};
	\node (v2) at (-2.5,3.5) {$x_1$};
	\node (v3) at (-1,2.5) {$x_2$};
	\node (v4) at (-2.5,0) {$\bot_x$};
		\node (v5) at (-0.5,0) {$y$};
	
	\draw[->]  (v1) edge (v2);
	\draw[->]  (v2) edge (v3);
	\draw[->]  (v1) edge (v3);
	\draw[->]  (v1) edge node[left, font= \scriptsize]{$\eta_0$} (v4);
	\draw[->]  (v2) edge node[left, font= \scriptsize]{$\eta_1$} (v4);
	\draw[->]  (v3) edge node[right, font= \scriptsize]{$\eta_2$} (v4);
	
	\draw[->]  (v1) edge node[left, font= \scriptsize]{$\eta^\prime_0$} (v5);
	\draw[->]  (v2) edge node[left, font= \scriptsize]{$\eta^\prime_1$} (v5);
	\draw[->]  (v3) edge node[right, font= \scriptsize]{$\eta^\prime_2$} (v5);
	\draw[->,dashed]  (v4) edge node[below, font= \scriptsize]{$\exists ! f$} (v5);
	\end{tikzpicture}
\end{center}
where $y \in \mathbb{E}_0$ is an object and $\eta^\prime : x \rightarrow s^{(n)}y$ is a map in $\mathbb{E}_n$. The speciality of double colimits is that we require the map $f$ to be in $\mathbb{E}_0$ such that $$s^{(n)}f \circ \eta = \eta^\prime$$ rather then any map $s^{(n)}\bot_x \rightarrow s^{(n)}y$ making the diagram commute. 

Cotabulators turn out to be a fundamental notion. We first study the nature of the association $$x \mapsto \bot_x$$. It is obvious that we have a functor $E_n \rightarrow \mathbb{E}_0$ but there is more. Let $\textbf{Gro}^\prime(E)$ be the category with:
\begin{itemize}
	\item[($\bullet$)] objects all $x \in \mathbb{E}_n$ for various $[n] \in \Delta$
	\item[($\rightarrow$)] morphisms from $x\in E_m$ to $y\in \mathbb{E}_n$ being pairs $(\theta, f)$ where $\theta: [m] \rightarrow [n]$ is a map in $\Delta$ and $f: x \rightarrow \theta(y)$ is a morphism in $\mathbb{E}_n$
	\item[($\circ$)] composition of morphisms similar to the Grothendieck construction presented in the previous section 
\end{itemize}
This category is dual to the Grothendieck construction and hence the notation. Now we observe that the cotabulator construction extends to a functor $$\bot: \textbf{Gro}^\prime(E) \rightarrow \mathbb{E}_0$$
Indeed let $(\theta,f) : x \rightarrow y$ be as above. The composite map in $\mathbb{E}_m$
$$x \xrightarrow{f} \theta(y) \xrightarrow{\theta(\eta)} \theta(s_0^{(n)}\bot_y) = s_0^{(m)}\bot_y$$ 
where $\eta$ denotes the structure map of the cotabulator as in the above definition,
induces a morphism $$\bot(\theta,f) : \bot_x \rightarrow \bot_y$$
by the universal property of cotabulators.

Next we prove a structure theorem which says that any double colimit as introduced above can be formed by cotabulators and ordinary colimits in $\mathbb{E}_n$. An analogous result for double categories can be found in \cite{grandis1999limits}. 

Again let 
$$(F,\psi) : \mathcal{J} \rightarrow \mathbb{E}$$
be a diagram. Let $\textbf{Gro}(\mathcal{J})$ be the Grothendieck construction of the simplicial set $\mathcal{J}$. Explicitly this is the category with 
\begin{itemize}
	\item[($\bullet$)] objects the simplicies $\sigma \in \mathcal{J}_n$ for various $[n] \in \Delta$
	\item[($\rightarrow$)] morphisms from $\sigma \in \mathcal{J}_n$ to $\tau \in \mathcal{J}_m$ being maps $\theta : [m] \rightarrow [n]$ such that $\tau = \theta(\sigma)$
	\item[($\circ$)] composition as usual
\end{itemize}
Then $(F,\psi)$ induces a functor 
$$\textbf{Gro}(F,\psi) : \textbf{Gro}(\mathcal{J})^{op} \rightarrow \textbf{Gro}^\prime(E)$$
On objects the functor is given by $\sigma \mapsto F(\sigma)$ for $\sigma\in \mathcal{J}_n$. If $\theta: [m] \rightarrow [n]$ is a map in $\delta$ and $\tau = \theta(\sigma)$ then we have a morphism in $E_0$
$$(\theta, \psi_\theta) : F(\tau) = F(\theta(\sigma)) \rightarrow \theta(F(\sigma))$$ in $\textbf{Gro}^\prime(\mathbb{E})$. 

\begin{theorem}[Construction of double colimits]
	\label{colimconstruction}
	In a setting as above we have a natural isomorphism
	$$\text{dcolim}F \cong \text{colim}\{\textbf{Gro}(\mathcal{J})^{op} \xrightarrow{\textbf{Gro}(F,\psi)} \textbf{Gro}^\prime(\mathbb{E}) \xrightarrow{\bot} \mathbb{E}_0\}$$
	given that all cotabulators exist in $\mathbb{E}$ and $\mathbb{E}_0$ is cocomplete.
\end{theorem}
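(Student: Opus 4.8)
The plan is to verify the universal property of the double colimit by producing, for any target object $x \in \mathbb{E}_0$, a natural bijection between maps $\eta': F \Rightarrow x$ and maps $\mathrm{colim}\{\bot \circ \mathbf{Gro}(F,\psi)\} \to x$ in $\mathbb{E}_0$. The key observation is that both sides can be described "simplex-by-simplex": a transformation $\eta': F \Rightarrow x$ amounts to giving, for every $n$-simplex $\sigma \in \mathcal{J}_n$, a map $\eta'_\sigma : F(\sigma) \to s^{(n)}x$ in $\mathbb{E}_n$, compatible with the structure maps $\psi_\theta$ of $(F,\psi)$ and with the faces/degeneracies of $\mathcal{J}$. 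So first I would unwind the definition of a transformation $F \Rightarrow x$ (where $x$ stands for the constant functor $\mathcal{J} \to \Delta^0 \xrightarrow{x} \mathbb{E}$) into exactly this data.

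Second, I would invoke the universal property of the cotabulator: a map $F(\sigma) \to s^{(n)}x$ in $\mathbb{E}_n$ is the same thing as a map $\bot_{F(\sigma)} \to x$ in $\mathbb{E}_0$, naturally in $\sigma$. The naturality is precisely the functoriality of $\bot : \mathbf{Gro}'(\mathbb{E}) \to \mathbb{E}_0$ established just before the theorem: for a morphism $(\theta,\psi_\theta): F(\theta(\sigma)) \to \theta(F(\sigma))$ in $\mathbf{Gro}'(\mathbb{E})$, the induced map $\bot_{F(\theta(\sigma))} \to \bot_{F(\sigma)}$ is built from exactly the composite $F(\theta(\sigma)) \xrightarrow{\psi_\theta} \theta(F(\sigma)) \xrightarrow{\theta(\eta)} s^{(m)}\bot_{F(\sigma)}$ that appears in the compatibility condition for a transformation. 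Thus the family $(\eta'_\sigma)$, transported across the cotabulator adjunction-like bijection, becomes exactly a cocone under the diagram $\bot \circ \mathbf{Gro}(F,\psi) : \mathbf{Gro}(\mathcal{J})^{op} \to \mathbb{E}_0$ with vertex $x$. Here one uses that the index category on the right is $\mathbf{Gro}(\mathcal{J})^{op}$ because $\bot$ is covariant on $\mathbf{Gro}'(\mathbb{E})$ but the structure maps $\psi_\theta$ point "against" $\theta$, which is the reason the $op$ is needed; I would spell this variance bookkeeping out carefully since it is the easiest place to make a sign error.

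Third, since $\mathbb{E}_0$ is cocomplete, cocones under this diagram with vertex $x$ are in natural bijection with maps $\mathrm{colim}\{\bot \circ \mathbf{Gro}(F,\psi)\} \to x$. Composing the three bijections — transformations $F \Rightarrow x$, cocones, maps out of the colimit — gives a natural isomorphism, and by the Yoneda lemma the representing objects $\mathrm{dcolim}\,F$ and $\mathrm{colim}\{\bot \circ \mathbf{Gro}(F,\psi)\}$ are canonically isomorphic; the structure map $\eta : F \Rightarrow \mathrm{dcolim}\,F$ corresponds to the universal cocone. One should also note that all cotabulators exist by hypothesis, so $\bot \circ \mathbf{Gro}(F,\psi)$ is genuinely defined.

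The main obstacle I expect is not any single step but the coherence bookkeeping in the oplax case: checking that the compatibility conditions a transformation $\eta'$ must satisfy (naturality squares for every $\theta:[m]\to[n]$, interacting with the oplax structure $\psi_\theta$ of $F$ and the degeneracies $s^{(n)}$) translate \emph{exactly}, under the cotabulator bijection, into the cocone equations for $\bot \circ \mathbf{Gro}(F,\psi)$ — no more and no less. In particular one must check that the structure isomorphisms (or comparison maps) of the oplax functor $\bot$ on degeneracies, and the laxness of $(\cdot)^*$-type comparison data, do not obstruct the correspondence; for a strict $F$ this is routine, and for oplax $F$ it is a diagram chase that I would organize by reducing, via the simplicial identities, to the generating faces $d^i$ and degeneracies $s^i$ and checking the two families of squares there.
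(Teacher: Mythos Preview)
Your proposal is correct and follows essentially the same approach as the paper: both arguments use the universal property of cotabulators to identify maps $F(\sigma)\to s^{(n)}x$ with maps $\bot_{F(\sigma)}\to x$, and then observe that the compatibility conditions for a transformation $F\Rightarrow x$ correspond exactly to the cocone conditions for the diagram $\bot\circ\mathbf{Gro}(F,\psi)$. The paper's proof is in fact much terser than yours---it simply declares the statement ``somehow tautological'' and sketches the correspondence in two sentences---so your more careful unpacking of the variance and the oplax coherence is, if anything, an improvement in rigor rather than a different route.
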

\begin{proof}
	The statement of the theorem is somehow tautological. Let us denote the colimit on the right by $c \in \mathbb{E}_0$. $c$ comes equipped with a morphism, say $\eta : \bot_{F(\sigma)} \rightarrow c$ for each $\sigma \in \mathcal{J}$, such that the appropriate diagrams commute. 
	
	But $\eta$ can be thought of as a morphism in $F(\sigma) \rightarrow s_0^{(n)}c$ because of the universal property of cotabulators. Hence $c$ satisfies the universal property of double colimits and we have $$c \cong \text{dcolim}F$$
\end{proof}

We conclude with a proposition which relates cotabulators and universal extensions.

\begin{proposition}
	\label{cotabextension}
	Let $\mathbb{E}$ be an equipment and $x \in \mathbb{E}_n$ equipped with morphisms $f_\alpha : x_\alpha \rightarrow y_\alpha$ from a family of subsimplicies $x_\alpha$ as in the setting of the strong equipment property \ref{strong}. Then the cotabulator of the universal extension $x(f_\alpha)$ is given as a pushout
	\begin{center}
		\begin{tikzpicture}
		\node (v1) at (-3.5,4) {$\coprod_\alpha \bot_{x_\alpha}$};
		\node (v2) at (-1.5,4) {$\bot_x$};
		
		\node (v3) at (-3.5,2) {$\coprod_\alpha \bot_{y_\alpha}$};
		\node (v4) at (-1.5,2) {$\bot_{x(f_\alpha)}$};
		\draw[->]  (v1) edge (v2);
		\draw[->]  (v1) edge node[left, font = \scriptsize]{$\coprod_\alpha f_\alpha$} (v3);
		\draw[->, dashed]  (v2) edge (v4);
		\draw[->, dashed]  (v3) edge (v4);
		\end{tikzpicture}
	\end{center}
	 
\end{proposition}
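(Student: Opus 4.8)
The plan is to verify the universal property of the pushout by hand, translating cocones out of the displayed square into maps out of $x(f_\alpha)$ via two universal properties: that of the cotabulators and that of the strong equipment property \ref{strong}. Write $m_\alpha=|A_\alpha|-1$, so $x_\alpha\in\mathbb{E}_{m_\alpha}$, let $\theta_\alpha:[m_\alpha]\to[n]$ be the face picking out $A_\alpha$, and let $\iota_\alpha:x_\alpha\to x$ be the subsimplex inclusion regarded as the morphism $(\theta_\alpha,\mathrm{id})$ in $\textbf{Gro}^\prime(\mathbb{E})$. Applying the cotabulator functor $\bot:\textbf{Gro}^\prime(\mathbb{E})\to\mathbb{E}_0$ to $\iota_\alpha$, to $f_\alpha:x_\alpha\to y_\alpha$, to $f:x\to x(f_\alpha)$, and to the inclusions $y_\alpha\hookrightarrow x(f_\alpha)$ produces exactly the four maps in the square, so there is a canonical comparison map from the pushout $P$ to $\bot_{x(f_\alpha)}$; since both are objects of $\mathbb{E}_0$, it suffices to produce a bijection between maps $P\to w$ and maps $\bot_{x(f_\alpha)}\to w$, natural in $w\in\mathbb{E}_0$, and check it is the one induced by the comparison map.

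First I would unwind maps $P\to w$: these are cocones $(a:\bot_x\to w,\ (b_\alpha:\bot_{y_\alpha}\to w)_\alpha)$ with $b_\alpha\circ\bot(f_\alpha)=a\circ\bot(\iota_\alpha)$ for all $\alpha$. By the cotabulator universal property, $a$ is the same datum as $\hat a:=s^{(n)}a\circ\eta_x:x\to s^{(n)}w$ in $\mathbb{E}_n$, and $b_\alpha$ corresponds to $\hat b_\alpha:=s^{(m_\alpha)}b_\alpha\circ\eta_{y_\alpha}:y_\alpha\to s^{(m_\alpha)}w$. Now I would compute both sides of the cocone equation through the cotabulator bijection for $\bot_{x_\alpha}$: using the defining identity $s^{(m_\alpha)}\bot(f_\alpha)\circ\eta_{x_\alpha}=\eta_{y_\alpha}\circ f_\alpha$, the left side corresponds to $\hat b_\alpha\circ f_\alpha$; using $s^{(m_\alpha)}\bot(\iota_\alpha)\circ\eta_{x_\alpha}=\theta_\alpha(\eta_x)=\eta_x|_{x_\alpha}$ together with $\theta_\alpha(s^{(n)}c)=s^{(m_\alpha)}c$ for degenerate simplices, the right side corresponds to $\theta_\alpha(s^{(n)}a\circ\eta_x)=\hat a|_{x_\alpha}$. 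Hence the cocone condition is equivalent to $\hat b_\alpha\circ f_\alpha=\hat a|_{x_\alpha}$ for all $\alpha$, i.e. maps $P\to w$ are naturally in bijection with families $(\hat a:x\to s^{(n)}w,\ (\hat b_\alpha)_\alpha)$ in which each $\hat b_\alpha$ exhibits $\hat a|_{x_\alpha}$ as factoring through $f_\alpha$.

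Next I would identify that set with $\mathrm{Hom}_{\mathbb{E}_n}(x(f_\alpha),s^{(n)}w)$. Such a family is precisely the input to the strong equipment property applied to the map $\hat a:x\to s^{(n)}w$ with chosen factorizations $\hat b_\alpha$, which yields a unique $g:x(f_\alpha)\to s^{(n)}w$ with $g\circ f=\hat a$ and $g|_{y_\alpha}=\hat b_\alpha$ (reading Proposition \ref{strong}(ii) as tracking the factorizations, in accordance with the original equipment property). Conversely $g\mapsto(g\circ f,\ (g|_{y_\alpha})_\alpha)$ satisfies the compatibility, since $(g\circ f)|_{x_\alpha}=g|_{y_\alpha}\circ f|_{x_\alpha}=g|_{y_\alpha}\circ f_\alpha$ using $f|_{x_\alpha}=f_\alpha$ and that $y_\alpha$ is the corresponding subsimplex of $x(f_\alpha)$ (Proposition \ref{strong}(i)); these assignments are mutually inverse by the uniqueness clause. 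Composing with the cotabulator bijection for $x(f_\alpha)$ gives the desired natural bijection $\mathrm{Hom}(P,w)\cong\mathrm{Hom}(\bot_{x(f_\alpha)},w)$, and evaluating it on $\mathrm{id}_{\bot_{x(f_\alpha)}}$ shows it is realized by the canonical comparison map, which is therefore invertible.

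The main obstacle I expect is the bookkeeping in the middle step: matching the abstract cocone equation $b_\alpha\circ\bot(f_\alpha)=a\circ\bot(\iota_\alpha)$ with the concrete factorization condition $\hat b_\alpha\circ f_\alpha=\hat a|_{x_\alpha}$ requires being careful about how the cotabulator structure maps $\eta$ behave under faces (the identities $\theta_\alpha(s^{(n)}c)=s^{(m_\alpha)}c$ and $\bot(\iota_\alpha)$ corepresenting the restricted universal cocone), and about the precise sense in which Proposition \ref{strong} records the chosen factorizations — the one place where its slightly loose phrasing should be interpreted in line with the original equipment property of Section \ref{dblcat}. The remaining verifications (functoriality of $\bot$, existence of the cotabulators involved, naturality in $w$) are routine.
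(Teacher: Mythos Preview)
Your proposal is correct and follows essentially the same route as the paper: translate maps out of the candidate pushout through the cotabulator universal property, then through the universality of $x(f_\alpha)$, and observe this matches maps out of $\bot_{x(f_\alpha)}$. You are considerably more careful than the paper about the bookkeeping---in particular about tracking the chosen factorizations $\hat b_\alpha$ rather than merely their existence---and your caveat about reading Proposition~\ref{strong}(ii) in this stronger sense is well taken, since the paper's phrasing there (and in its own proof of this proposition) is indeed loose on exactly this point.
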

\begin{proof}
	The proposition is a mere restatement of the universality of $x(f_\alpha)$. By definition of cotabulators, a map in $\mathbb{E}_0$ 
	$$\bot_{x(f_\alpha)} \rightarrow z$$
	for some object $z$ corresponds to a morphism in $\mathbb{E}_n$
	$$x(f_\alpha) \rightarrow s^nz$$
	which in turn corresponds, by universality, to a map $x \rightarrow s^nz$ such that the restriction $x_\alpha \rightarrow s^nz_\alpha$ factors through $f_\alpha$ for all $\alpha$. Since all $s^nz_\alpha$ are degenerate, by the universal property of cotabulators, the above is precisely the pushout property. 
\end{proof}

Dually we may define the tabulator $\top_x$ for $x\in \mathbb{E}_n$ and the double limit $\text{dlim}F$ of a lax functor $(F,\psi)$. All the above statements have their duals for tabulators and limits. In particular all double limits in $\mathbb{E}$ are formed by tabulators and limits in $\mathbb{E}_0$. We discuss duality in the last section.

\begin{note}
	We established that the association $x \mapsto \bot_x$ for an object $x \in \mathbb{E}_n$ extends to a functor $\textbf{Gro}^\prime(\mathbb{E}) \xrightarrow{\bot} \mathbb{E}_0$. We may also observe that the structure map $\eta : x \rightarrow s_0^{(0)}\bot_x$ gives us a costar in $E_0$ via the vertex maps $x_i \rightarrow \bot_x$. This should define a map 
	$$\bot : E \rightarrow \text{\textbf{coSpan}}(\mathbb{E}_0)$$
	This map seems to describe a cospan representation in the examples. Cospan representation for double categories is discussed in \cite{grandis2017span} and a similar trend seems to appear for simplicial categories. We leave these considerations for future work as well.
\end{note}

\subsection{Simplicially enriched categories}
\label{ssetcat}

Categories enriched over simplicial sets (with its cartesian monoidal structure) are a fundamental structure in categorical homotopy theory. Unfortunately they are named simplicial categories in a lot of the literature on the subject. Our references for this subject are \cite{riehl2014categorical} and \cite{shulman2006homotopy}. 

A simplicially enriched category $\mathcal{C}$, or \textbf{sSet}-category for short, consists of:
\begin{itemize}
	\item[($\bullet$)] A collection of objects $x,y,z \dots$
	\item[($\triangle$)] A simplicial set $\mathcal{C}(x,y)$ for any two objects $x,y$
	\item[($=$)] A $0$-simplex $$1_x : \Delta^0 \rightarrow \mathcal{C}(x,x)$$
	called the identity, for each object $x$
	\item[($\circ$)] A composition operation 
	$$\circ: \mathcal{C}(y,z) \times \mathcal{C}(x,y) \rightarrow \mathcal{C}(x,z)$$
	which is a morphism of simplicial sets, for any three objects $x,y,z$
\end{itemize}
satisfying associativity and unital laws depicted as commutative diagrams
\begin{center}
	\begin{tikzpicture}
	
	\node (v1) at (-3.5,3) {$\Delta^0 \times \mathcal{C}(x,y)$};
	\node (v2) at (0.5,3) {$\mathcal{C}(x,x) \times \mathcal{C}(x,y)$};
	\node (v3) at (-1.5,1.5) {$\mathcal{C}(x,y)$};
	\draw[->]  (v1) edge node[above, font= \scriptsize] {$1_x \times 1_{\mathcal{C}(x,y)}$} (v2);
	\draw[->]  (v2) edge node[right, font= \scriptsize]{$\circ$} (v3);
	\draw[->]  (v1) edge node[left, font= \scriptsize]{$\cong$}(v3);
	\end{tikzpicture} \ \ \ \  \
	\begin{tikzpicture}
	\node (v1) at (-3.5,3) {$\mathcal{C}(x,y) \times \Delta^0$};
	\node (v2) at (0.5,3) {$\mathcal{C}(x,y) \times \mathcal{C}(y,y)$};
	\node (v3) at (-1.5,1.5) {$\mathcal{C}(x,y)$};
	\draw[->]  (v1) edge node[above, font= \scriptsize] {$ 1_{\mathcal{C}(x,y)} \times 1_y$} (v2);
	\draw[->]  (v2) edge node[right, font= \scriptsize]{$\circ$} (v3);
	\draw[->]  (v1) edge node[left, font= \scriptsize]{$\cong$} (v3);
	\end{tikzpicture}
\end{center}
\begin{center}
	\begin{tikzpicture}

	\node (v1) at (-3,3.5) {$(\mathcal{C}(z,w) \times \mathcal{C}(y,z)) \times \mathcal{C}(x,y)$};
	\node (v2) at (3.5,3.5) {$\mathcal{C}(z,w) \times (\mathcal{C}(y,z) \times \mathcal{C}(x,y))$};
	\node (v3) at (-3,1.5) {$\mathcal{C}(y,w) \times \mathcal{C}(x,y)$};
	\node (v4) at (3.5,1.5) {$\mathcal{C}(z,w) \times \mathcal{C}(x,z)$};
	\node (v5) at (0.3,0) {$\mathcal{C}(x,w)$};
	\draw[->]  (v1) edge node[above, font= \scriptsize] {$\cong$}  (v2);
	\draw[->]  (v1) edge node[left, font= \scriptsize]{$\circ \times 1_{\mathcal{C}(x,y)}$} (v3);
	\draw[->]  (v2) edge node[right, font= \scriptsize]{$ 1_{\mathcal{C}(z,w)} \times \circ$} (v4);
	\draw[->]  (v3) edge node[below, font= \scriptsize]{$\circ$} (v5);
	\draw[->]  (v4) edge node[below, font= \scriptsize]{$\circ$} (v5);
	\end{tikzpicture}
\end{center}
for all objects $x,y,z,w$. 

Examples of simlicially enriched categories are numerous and occur naturally. Most prominently the category of simplicial sets is itself simplicially enriched. For two simplicial sets $X,Y$ the $n$-simplicies of the mapping space are given by 
$$\text{Map}(X,Y)_n = \text{\textbf{sSet}}(X \times \Delta^n, Y)$$
with the obvious face and degeneracy relations. The category of topological spaces can be seen to be simplicially enriched in a similar fashion. For two spaces $X,Y$ we can declare the $n$-simplicies of the mapping space to be continuous maps 
$$X \times |\Delta^n| \rightarrow Y$$
where $|\Delta^n|$ is the standard topological $n$-simplex. 

This way in both examples $0$-simplicies in $\text{Map}(X,Y)$ are ordinary maps of simplicial sets, $1$-simplicies are homotopies and higher simplicies are higher homotopies. 

We may (and will) use this interpretation of simplicial enrichment even for abstract \textbf{sSet}-categories $\mathcal{C}$. Let $x,y \in \mathcal{C}$ be two objects. A $0$-simplex in $f \in \mathcal{C}(x,y)_0$ may be depicted as a usual morphism $x \xrightarrow{f} y$. A $1$-simplex $\alpha \in \mathcal{C}(x,y)_1$ will have two vertices in the mapping space, say $d_0\alpha = f$ and $d_1\alpha = g$. We may depict $\alpha$ as  
\begin{center}
	\begin{tikzpicture}
	\node (v1) at (-4,3) {$x$};
	\node (v2) at (-2,3) {$y$};
	\draw[->, bend left = 50]  (v1) edge node [above,font = \scriptsize]{$f$} (v2);
	\draw[->, bend right = 50]  (v1) edge node [below,font = \scriptsize]{$g$} (v2);
	\node (v3) at (-3,3.5) {};
	\node (v4) at (-3,2.5) {};
	\draw[->]  (v3) edge[double] node[right,font = \scriptsize]{$\alpha$} (v4);
	\end{tikzpicture}
\end{center} 

The above picture may be thought of as being produced by suspending the $1$-simplex $f \xrightarrow{\alpha} g$ between the objects $x$ and $y$. It is helpful in visualising \textbf{sSet}-categories to think of the whole mapping space $\mathcal{C}(x,y)$ as being suspended between $x$ and $y$.

 While in an ordinary category we compose arrows in a \textbf{sSet}-category $\mathcal{C}$ we compose higher simplicies as well. Nonetheless we have an underlying category $\mathcal{C}_0$ associated to $\mathcal{C}$ with the same objects and morphism sets given as 
$$\mathcal{C}_0(x,y) = \mathcal{C}(x,y)_0$$

Before returning to homotopy colimits we define another important notion: tensoring and cotensoring in a \textbf{sSet}-category. A \textbf{sSet}-category is said to be tensored over simplicial sets if for any object $x \in \mathcal{C}$ and simplicial set $K$ there is an object $K \odot x \in \mathcal{C}$ such that for any other object $y$ we have a natural isomorphism 
$$\mathcal{C}_0(K \odot x, y) \cong \textbf{sSet}(K, \mathcal{C}(x,y))$$
Similarly $\mathcal{C}$ is cotensored if there is an object $\{K,y \}$ for all $y \in \mathcal{C}$ and $K \in \text{\textbf{sSet}}$ such that for all objects $x$ there is a natural isomorphism 
$$\textbf{sSet}(K, \mathcal{C}(x,y)) \cong \mathcal{C}_0(x, \{K,y \})$$

The category of simplicial sets is tensored and cotensored over itself via $$K \odot X = K \times X$$ and $$\{K,X\} = \text{Map}(K,X)$$
for all $K,X \in \text{\textbf{sSet}}$. The category of topological spaces is tensored and cotensored as well. If $X$ is a space then define $$K \odot X = |K| \times X$$ and $$\{K,X\} = \text{C}(|K|,X)$$
where $|K|$ denotes the geometric realization of $K$ and $\text{C}(|K|,X)$ denotes the set of continuous maps from $|K|$ to $X$ endowed with the compact-open topology. In this case we have to restrict ourselves to a convenient category of spaces containing all CW-complexes, like for example the category of compactly generated Hausdorff spaces. 

Tensoring and cotensoring bring a \textbf{sSet}-category $\mathcal{C}$ closer to intition. For example a homotopy $\Delta^1 \rightarrow \mathcal{C}(x,y)$ can be realized as a map 
$$\Delta^1 \odot x \rightarrow y$$
Also for a map $f: x \rightarrow y$ we can define its mapping cylinder to be given as a pushout 
\begin{center}
	\begin{tikzpicture}
	
	\node (v1) at (-3,3.5) {$x$};
	\node (v2) at (-1,3.5) {$y$};
	\node (v3) at (-3,1.5) {$\Delta^1 \odot x$};
	\node (v4) at (-1,1.5) {$M_f$};
	\draw[->]  (v1) edge node [above,font = \scriptsize]{$f$} (v2);
	\draw[->]  (v1) edge node [left,font = \scriptsize]{$d^0 \odot 1_x$} (v3);
	\draw[->, dashed]  (v3) edge (v4);
	\draw[->, dashed]  (v2) edge (v4);
	\end{tikzpicture}
\end{center}

We also have enough structure to talk about homotopy colimits in $\mathcal{C}$. Let $\mathcal{J}$ be an indexing category and $$F : \mathcal{J} \rightarrow \mathcal{C}$$
be a diagram. Then the homotopy colimit of $F$ is defined via the coend formula 
$$\text{hocolim}F = \int^{i \in \mathcal{J}} i/\mathcal{J} \odot F(i)$$
Here $i/J$ for an object $i\in \mathcal{J}$ is the usual category over $i$ with
\begin{itemize}
	\item[($\bullet$)] objects pairs $(j,f)$ where $j$ is an object in $\mathcal{J}$ and $f: i \rightarrow j$ is a morphism
	\item[($\rightarrow$)] morphisms between two objects $(j,f)$ and $(j^\prime, f^\prime)$ are commutative triangles
	\begin{center}
		\begin{tikzpicture}

		\node (v1) at (-2.5,2) {$i$};
		\node (v2) at (-3.5,3) {$j$};
		\node (v3) at (-1.5,3) {$j^\prime$};
		\draw[->]  (v1) edge node [left,font = \scriptsize]{$f$} (v2);
		\draw[->]  (v1) edge node [right,font = \scriptsize]{$f^\prime$} (v3);
		\draw [->] (v2) edge (v3);
		\end{tikzpicture}
	\end{center}
\end{itemize}
Coends are a special form of colimit (see for example \cite{mac2013categories} or the dedicated manuscript \cite{loregian2015co}) which we find mysterious, so we will elaborate the homotopy colimit construction in more elementary terms.  

We first consider the case where $\mathcal{J} = \Delta^n$ so that the functor $F$ will give us an $n$-simplex, say
$$\sigma : x_0 \xrightarrow{f_1} x_1 \xrightarrow{f_2} \dots \xrightarrow{f_n} x_n$$
in the category $\mathcal{C}_0$. Then we may think of the homotopy colimit of $\sigma$ as being given by the colimit of the following staircase diagram:
\begin{center}
	\begin{tikzpicture}
	
	\node (v2) at (-5,-3) {$x_0 \odot \Delta^n$};
	\node (v1) at (-5,-1.5) {$x_0 \odot \Delta^{n-1}$};
	\node (v3) at (-2,-1.5) {$x_1 \odot \Delta^{n-1}$};
	\node (v4) at (-2,0) {$x_1 \odot \Delta^{n-2}$};
	\node (v5) at (1,0) {$x_2 \odot \Delta^{n-2}$};
	\node (v6) at (1,1.5) {$\dots$};
	\node at (1.5,2) {$\dots$};
	\node (v7) at (1.5,2.5) {$\dots$};
	\node (v8) at (3.5,2.5) {$x_{n-1} \odot \Delta^1$};
	\node (v9) at (3.5,4) {$x_{n-1}$};
	\node (v10) at (5.5,4) {$x_n$};
	\draw[->]  (v1) edge node [left,font = \scriptsize]{$1_{x_0} \odot d^0$} (v2);
	\draw[->]  (v1) edge node [above,font = \scriptsize]{$f_1 \odot \Delta^{n-1}$}(v3);
	\draw[->]  (v4) edge node [left,font = \scriptsize]{$1_{x_1} \odot d^0$} (v3);
	\draw[->]  (v4) edge node [above,font = \scriptsize]{$f_2 \odot \Delta^{n-2}$} (v5);
	\draw[->]  (v6) edge (v5);
	\draw[->]  (v7) edge (v8);
	\draw[->]  (v9) edge node [left,font = \scriptsize]{$1_{x_{n-1}} \odot d^0$} (v8);
	\draw[->]  (v9) edge node [above,font = \scriptsize]{$f_n$} (v10);
	\end{tikzpicture}
\end{center}
We will denote this colimit by $M_\sigma$ and refer to it as the (higher) mapping cylinder of $\sigma$. 

Next we may wonder about the association $$\sigma \mapsto M_\sigma$$
of the mapping cylinder to an $n$-simplex in $\mathcal{C}_0$. As expected this extends to define a functor 
$$M_* : \text{\textbf{Gro}}(\mathcal{C}_0)^{op} \rightarrow \mathcal{C}_0$$
To construct this functor we consider a map $\theta: [m] \rightarrow [n]$ in $\Delta$ and then tries to see that there is an induced map $M_{\theta(\sigma)} \rightarrow M\sigma$ by constructing a map between the diagrams that form these mapping cylinders. This is not a difficult exercice and we leave it to the reader. We also invite the reader to contemplate the induced map geometrically in light of the picture we drew in the first section. 

Finally we may define (or conclude depending on where we start) $$\text{hocolim F} = \text{colim} \{ M_* \circ \text{\textbf{Gro}}(F)\}$$
where the functor on the right is the composite 
$$\text{\textbf{Gro}}(\mathcal{J})^{op} \xrightarrow{\text{\textbf{Gro}}(F)} \text{\textbf{Gro}}(\mathcal{C}_0)^{op} \xrightarrow{M_*} \mathcal{C}_0$$
All we have done is form the mapping cylinder corresponding to each simplex in $\mathcal{J}$ and then we glued these higher cylinders along their faces faces. Homotopy limits are defined by dualizing everything.

\begin{remark}
	The above are labelled "local homotopy colimits" in \cite{shulman2006homotopy}. The reason being they do not necessarily satisfy the "global" properties to define derived functors. In the setting of simplicial enrichment one defines homotopy colimits without ever mentioning weak equivalences at all. 
\end{remark}

\subsection{Double colimits and homotopy colimits}
\label{thm}

Now we reveal the promised connection between double colimits in higher equipments and homotopy colimits. The fist were defined for a simplicial category and the latter can be defined in various contexts (\textbf{sSet}-categories, model categories etc, see \cite{dugger2008primer}). So we start by building a bridge between the two worlds. 

Let $\mathbb{E}$ be a simplicial category.There is a \textbf{sSet}-category $\mathbb{E}_v$ with:
\begin{itemize}
	\item[($\bullet$)] objects those of $\mathbb{E}_0$
	\item[($\triangle$)] $n$-simplicies of the mapping space for two objects $x,y \in \mathbb{E}_0$ given by 
	$$\mathbb{E}_v(x,y)_n = \mathbb{E}_n(s_0^{(n)}x, s_0^{(n)}y)$$
	\item[($\rightsquigarrow$)] for a map $\theta: [m] \rightarrow [n]$ in $\Delta$ the induced map given as 
	$$\mathbb{E}(\theta) : \mathbb{E}_n(s_0^{(n)}x, s_0^{(n)}y) \rightarrow \mathbb{E}_m(s_0^{(m)}x, s_0^{(m)}y)$$
	\item[($\circ$)] composition given in the obvious way by composition in the categories $\mathbb{E}_n$
\end{itemize}
The above construction is derived from the analogy between the following figures
\begin{center}
	\begin{tikzpicture}

	\node (v1) at (-4,2.5) {$x$};
	\node (v2) at (-2.5,3.5) {$x$};
	\node (v3) at (-1,2.5) {$x$};
	\node (v4) at (-4,0.5) {$y$};
	\node (v5) at (-2.5,1.5) {$y$};
	\node (v6) at (-1,0.5) {$y$};
	\draw[-]  (v1) edge[double] (v2);
	\draw[-]  (v2) edge[double] (v3);
	\draw[-]  (v1) edge[double] (v3);
	\draw[->]  (v1) edge node[left, font= \scriptsize]{$f_0$} (v4);
	\draw[->]  (v2) edge node[left, font= \scriptsize]{$f_1$} (v5);
	\draw[->]  (v3) edge node[right, font= \scriptsize]{$f_2$} (v6);
	\draw[-]  (v4) edge[double] (v5);
	\draw[-]  (v5) edge[double] (v6);
	\draw[-]  (v4) edge[double] (v6);
	\end{tikzpicture} \ \ \ \ \ \ \ \ \ \ \ \
	\begin{tikzpicture}
	
	\node (v1) at (-2,3.5) {$x$};
	\node (v2) at (-2,0.5) {$y$};
	\draw[->, bend right = 50]  (v1) edge node[left, font= \scriptsize]{$f_0$} (v2);
	\draw[->, bend left  = 50]  (v1) edge node[right, font= \scriptsize]{$f_2$} (v2);
	\draw[->]  (v1) edge node[ font= \scriptsize]{$f_1$} (v2);
	
	\node (v3) at (-2.8,2.2) {};
	\node (v4) at (-2,1.6) {};
	\draw[->, bend right = 15]  (v3) edge[double] (v4);
	
	\node (v5) at (-1.2,2.2) {};
	\draw[->, bend right = 15]  (v4) edge[double] (v5);
	
	\draw[->, bend left = 10, dashed]  (v3) edge[double] (v5);
	\end{tikzpicture}
\end{center}
On the left we have depicted a map in $\mathcal{\mathbb{E}}_2$ between $s_0^{(2)}x$ and $s_0^{(2)}y$ and on the right we have depicted a $2$-simplex in the mapping space between $x$ and $y$ in the \textbf{sSet}-category $\mathbb{E}_v$. 

This construction is also analogous to the construction of the vertical $2$-category associated to a double category. So we have discovered something important: 
\begin{quote}
	The vertical direction of a simplicial category (when looked at as a two-fold structure) consists of a simplicially enriched category!
\end{quote}
We dare rephrase the above and say that double categories are to $2$-categories what simplicial categories are to simplicially enriched categories, and write
$$\frac{\text{double categories}}{2\text{-categories}} = \frac{\text{simplicial categories}}{\text{\textbf{sSet}-categories}}$$

The connection between simplicial categories and simplicially enriched categories is not unknown, although our construction and interpretation seems to be novel (to the best of our knowledge). If $\mathcal{C}$ is a \textbf{sSet}-category, besides $\mathcal{C}_0$ we may construct categories $\mathcal{C}_n$ with
\begin{itemize}
	\item[($\bullet$)] objects those of $\mathcal{C}$
	\item[($\rightarrow$)] morphism set between two objects $x$ and $y$ being the $n$-simplicies of their mapping space 
	$$\mathcal{C}_n(x,y) = \mathcal{C}(x,y)_n$$
	\item[($\circ$)] composition as prescribed by the composition map in $\mathcal{C}$
\end{itemize}
The reader can utilize the above pictures again to grasp the categories $\mathcal{C}_n$, and also the fact that the mapping $[n] \mapsto \mathcal{C}_n$ gives us a simplicial category, which we denote $|\mathcal{C}|$, with faces and degeneracies defined in the obvious way (this is mentioned in \cite{riehl2014categorical} when discussing the coherent nerve construction). 

Both constructions extend to give us functors and as a matter of fact we have an adjunction 
$$|\cdot| : \textbf{sSet-Cat} \rightleftarrows \textbf{sCat} : (\cdot)_v$$
with $|\cdot|$ the left adjoint and $(\cdot_v)$ the right adjoint. Moreover we have $|\mathcal{C}|_v = \mathcal{C}$ for all $\mathcal{C}$ and $|\cdot|$ fully faithful. Next we state a theorem whose proof is easy but meaning essential in light of this work.

\begin{theorem}
	Let $\mathbb{E}$ be a simplicial category. If $\mathbb{E}$ has double colimits then $\mathbb{E}_v$ is cotensored. 
\end{theorem}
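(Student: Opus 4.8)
The plan is to turn cotensoring into a representability question and solve it with the cocompleteness of $\mathbb{E}_0$. First I would unpack the hypothesis: by the structure theorem for double colimits (Theorem \ref{colimconstruction}), together with the observation that cotabulators and ordinary colimits of diagrams of $0$-simplices are themselves double colimits, "$\mathbb{E}$ has double colimits" is equivalent to "$\mathbb{E}$ has all cotabulators and $\mathbb{E}_0$ is cocomplete". The use of cotabulators will be that $\bot_z$, for $z\in\mathbb{E}_n$, satisfies $\mathbb{E}_0(\bot_z,w)\cong\mathbb{E}_n(z,s_0^{(n)}w)$ naturally in $w$, i.e. $\bot_{(-)}\dashv s_0^{(n)}$.

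Next I would rewrite the mapping spaces of $\mathbb{E}_v$. Applying the cotabulator adjunction to $z=s_0^{(n)}x$ gives
$$\mathbb{E}_v(x,y)_n \;=\; \mathbb{E}_n\big(s_0^{(n)}x,\,s_0^{(n)}y\big)\;\cong\;\mathbb{E}_0\big(\bot_{s_0^{(n)}x},\,y\big),$$
and $[n]\mapsto \bot_{s_0^{(n)}x}$ is a cosimplicial object $C_x^\bullet\colon\Delta\to\mathbb{E}_0$ (functoriality of $\bot$ through $\textbf{Gro}^\prime(\mathbb{E})$, using $\mathbb{E}(\theta)\circ s_0^{(n)}=s_0^{(m)}$ for $\theta\colon[m]\to[n]$), so these isomorphisms are simplicial and assemble into $\mathbb{E}_v(x,y)\cong\mathbb{E}_0(C_x^\bullet,y)$ as simplicial sets. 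Then for a simplicial set $K$, writing $K\cong\int^{[n]}K_n\cdot\Delta^n$ and using that $\mathrm{Hom}$ carries colimits in its first variable to limits, I obtain
$$\textbf{sSet}\big(K,\mathbb{E}_v(x,y)\big)\;\cong\;\mathbb{E}_0\Big(\textstyle\int^{[n]\in\Delta}K_n\cdot C_x^n,\ y\Big),$$
where $K_n\cdot(-)$ is the $K_n$-fold copower and the coend $K\star C_x^\bullet:=\int^{[n]}K_n\cdot C_x^n$ exists because $\mathbb{E}_0$ is cocomplete. Finally, $x\mapsto K\star C_x^\bullet$ is a cocontinuous endofunctor of $\mathbb{E}_0$, hence admits a right adjoint; I would \emph{define} $\{K,y\}$ to be its value at $y$. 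Then $\mathbb{E}_0(x,\{K,y\})\cong\mathbb{E}_0(K\star C_x^\bullet,y)\cong\textbf{sSet}(K,\mathbb{E}_v(x,y))$ naturally in $x$, and naturality in $K$ and $y$ is a matter of chasing the construction. Since the underlying category of $\mathbb{E}_v$ is $\mathbb{E}_0$, this is exactly the defining property of a cotensoring.

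The delicate step — where the "easy" proof must do a little work — is the cocontinuity of $x\mapsto C_x^n=\bot_{s_0^{(n)}x}$. As $\bot$ is a left adjoint it is cocontinuous, so everything reduces to $s_0^{(n)}\colon\mathbb{E}_0\to\mathbb{E}_n$ preserving colimits. In the examples of interest ($\textbf{Cat}^\sharp$, $\textbf{sSet}^\sharp$, $\textbf{Top}^\sharp$, $\textbf{coSpan}(\mathcal{C})^\sharp$) the structure functors $\mathbb{E}(\theta)$ are pullbacks along maps between the $\Delta^n$'s, which in a locally cartesian closed setting are simultaneously left and right adjoints and so preserve all colimits; in the general case one either imposes this preservation as an extra hypothesis on $\mathbb{E}$ or checks it by hand. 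The only other caveat is the usual set-theoretic size condition needed to invoke the adjoint functor theorem in producing $\{K,-\}$, which is automatic whenever $\mathbb{E}_0$ is locally presentable.
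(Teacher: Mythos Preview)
Your argument through the coend formula
\[
\textbf{sSet}\big(K,\mathbb{E}_v(x,y)\big)\;\cong\;\mathbb{E}_0\Big(\textstyle\int^{[n]}K_n\cdot \bot_{s_0^{(n)}x},\ y\Big)
\]
is correct and is essentially the paper's proof, just phrased via coends rather than via the double colimit $\text{dcolim}\,K_x$ of the constant diagram (these agree by Theorem~\ref{colimconstruction}). The paper stops here: it sets $K\odot x:=\text{dcolim}\,K_x$ and reads off the displayed isomorphism, which is the defining property of a \emph{tensoring}, not a cotensoring. In other words, the word ``cotensored'' in the theorem statement is a slip; the paper's proof establishes that $\mathbb{E}_v$ is tensored, and only at the very end remarks that dually $\{K,x\}=\text{dlim}\,K_x$ once double \emph{limits} are assumed. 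You should read the statement with that correction in mind.

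Your additional step --- producing $\{K,y\}$ as a right adjoint to $x\mapsto K\star C_x^\bullet$ via an adjoint functor theorem --- is where you diverge from the paper, and it does not go through under the stated hypotheses. The cocontinuity of $x\mapsto \bot_{s_0^{(n)}x}$ genuinely requires that the degeneracy functors $s_0^{(n)}\colon\mathbb{E}_0\to\mathbb{E}_n$ preserve colimits, and nothing in the definition of a simplicial category or in ``$\mathbb{E}$ has double colimits'' forces this; you also need size conditions for the AFT. You flag both caveats honestly, but they mean that your proposal proves the literal (mis-stated) theorem only under extra assumptions the paper never makes. The clean fix is to recognise that the intended conclusion is tensoring, which your coend computation already delivers without any of those auxiliary hypotheses.
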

\begin{proof}
	Let $x$ be an object in $\mathbb{E}$ and $K$ be a simplicial set. Consider the constant diagram 
	$$K_x : K \rightarrow \mathbb{E}$$
	which sends everything to $x$. Define 
	$$K \odot x = \text{dcolim}{K_x}$$
	In particular 
	$$\Delta^n \odot x = \bot_{s^{(n)}x}$$
	Let $y$ be another object. Then for all $[n] \in \Delta$ we have 
	\begin{align*}
		\textbf{sSet}(\Delta^n, \mathbb{E}_v(x,y)) &= \mathbb{E}_v(x,y)_n \\ &= E_n(s_0^{(n)}x, s_0^{(n)}y) \\
		&\cong \mathbb{E}_0(\text{dcolim}{\Delta^n_x}, y) \\
		&= \mathbb{E}_0(\Delta^n \odot x, y) \\
		&= \mathbb{E}_{v0}(\Delta^n \odot x, y)
	\end{align*}
	Since every simplicial set $K$ is a colimit of its simplicies we conclude 
	$$\textbf{sSet}(K, \mathbb{E}_v(x,y)) \cong \mathbb{E}_{v0}(K\odot x, y)$$
	for all $x$ and $y$. 
	
	Dually we have 
	$$\{K,x \} = \text{dlim}{K_x}$$
	and in particular
	$$\{ \Delta^n, x \} = \top_{s^{(n)}x}$$
\end{proof}

Now we state and prove the main theorem of this work:

\hodcolim
\begin{proof}
	In light of the construction theorem for double colimits (Theorem \ref{colimconstruction})  it is enough to consider $\mathcal{J} = \Delta^n$. In this case, for an $n$-simplex $\sigma$ in $\mathbb{E}_0$, the tower representation (Proposition \ref{tower}) of $\sigma^*$ together with Proposition  \ref{cotabextension} imply that morphism in $E_n$ $$\sigma^* \rightarrow s^ny$$ for some $y \in \mathbb{E}_0$ corresponds precisely to morphisms from the step diagram corresponding to $M_\sigma$ to $y$.  
\end{proof}

We would like to conclude by duality that the theorem holds for homotopy limits as well by our proof relies on the fact that colimits interact well with cotabulators of universal extensions. We address this issue in the next section.

\subsection{Duality}
\label{dual}

The equipment property as postulated previously states "every morphism from boundaries extends universally". We can also consider the dual which states "every morphism to a boundary lifts universally". Perhaps we can refer to the first as the "left equipment property" and the latter as "right equipment property". 

More precisely, let $\mathbb{E}$ be a simplicial category, $y \in \mathbb{E}_n$, $x^\bullet : \partial \Delta^n \rightarrow \mathbb{E}$ and $f^\bullet: x^\bullet \rightarrow \partial y$ be a morphism. Then we 
say $\mathbb{E}$ is a right equipment if given the above there if $x \in \mathbb{E}_n$ and $f: x\rightarrow y$ satisfying:
\begin{itemize}
	\item[i)] $\partial f = f^\bullet$ and as a consequence $\partial x = x^\bullet$
	\item[ii)] Any morphism $g : z \rightarrow y$ such that $\partial g$ factors through $f^\bullet$ factors uniquely through $f$.
\end{itemize}
Denote such a universal lift $(f^\bullet)y$

In a right equipment we can perform all the constructions we did on the left side and obtain a theorem about homotopy limits. We will present just a sketch. First we see that $\textbf{sSet}^\sharp$ is a right equipment. Given $X^\bullet$, $Y$ and $f^\bullet$ as above we can obtain the universal lift as a pullback
\begin{center}
	\begin{tikzpicture}

	\node (v4) at (-3.5,3) {$(f^\bullet)Y$};
	\node (v1) at (-1.5,3) {$\partial Y$};
	\node (v3) at (-3.5,1) {$X^\bullet$};
	\node (v2) at (-1.5,1) {$Y$};
	\draw[left hook->]  (v1) edge node[right]{$\partial$} (v2);
	\draw[->]  (v3) edge node[above]{$f^\bullet$} (v2);
	\draw[->, dashed]  (v4) edge (v3);
	\draw[->, dashed]  (v4) edge (v1);
	\end{tikzpicture}
\end{center}

Let $\mathbb{E}$ be a right equipment. Then we may define the (right) companion ${}^*\sigma$ of an $n$-simplex 
$$\sigma : x_0 \xrightarrow{f_1} x_1 \xrightarrow{f_2} \dots \xrightarrow{f_n} x_n$$
in $\mathbb{E}_0$ together with a structure map $\zeta_\sigma : {}^*\sigma \rightarrow s^nx_n$ recursively as 
$${}^*\sigma= \begin{cases}
\sigma &\text{if} \ \  n=0 \\
(\zeta_{d_n\sigma}, \dots, \zeta_{d_0\sigma})s^nx_n &\text{if} \ \ n>0
\end{cases}$$
We define $\zeta_\sigma = 1_\sigma $ for $\sigma \in \mathbb{E}_0$ and we define $\zeta_\sigma$ to be the induced map $(\zeta_{d_n\sigma}, \dots, \zeta_{d_0\sigma})s^nx_n \rightarrow s^nx_n$. 

This construction preserves faces but not degeneracies. As previously the axioms guarantee a comparison map 
$$s_i{}^*\sigma \rightarrow {}^*(s_i\sigma)$$
so that we obtain a lax morphism
$${}^*(\cdot) : \mathbb{E}_0 \rightarrow \mathbb{E}$$

We may define the limit of a lax horizontal diagram in $\mathbb{E}$ in the obvious way. Then $\mathbb{E}_v$ is tensored if $\mathbb{E}$ has double limits. The tensoring is given by 
$$\{\Delta^n, x\} = \top_{s^{(n)}x}$$
and in general $\{K, x\} = \text{dlim}K_x$ for $K\in \textbf{sSet}$. 

Let $\mathcal{J}$ be an indexing category and $F: \mathcal{J} \rightarrow \mathbb{E}_0$ be a functor. Then we have
$$\text{dlim} {}^*F \cong \text{holim} F$$
where ${}^*F$ is the composite
$$\mathcal{J} \xrightarrow{F} E_0 \xrightarrow{{}^*(\cdot)} \mathbb{E}$$
and the homotopy colimit on the left side is interpreted in the simplicially enriched category $\mathbb{E}_v$. 

\begin{note}
The example of simplicial sets seems to indicate that we consider the case in which both the left and right equipment property are satisfied and left companions agree with right companions. We leave these considerations for future work.
\end{note}

\pagebreak
\bibliographystyle{alpha}
\bibliography{books}

\end{document}